\declaretheoremstyle[ eröffnen
  headfont=\color{red}\normalfont\bfseries,
  bodyfont=\color{red}\normalfont,
]{red}
\theoremstyle{definition}
\newtheorem{theorem}{Theorem}
\newtheorem*{theorem*}{Theorem}
\numberwithin{theorem}{section}
\newtheorem{definition}[theorem]{Definition}
\newtheorem*{definition*}{Definition}
\newtheorem{proposition}[theorem]{Proposition}
\newtheorem{lemma}[theorem]{Lemma}
\newtheorem{remark}[theorem]{Remark}
\newtheorem{example}[theorem]{Example}
\newtheorem{cor}[theorem]{Corollary}
\newenvironment{customthm}[1]
  {\innercustomthm}
  {\endinnercustomthm}
\DeclareMathOperator{\grdim}{grdim}
\DeclareMathOperator{\Gr}{Gr}
\DeclareMathOperator{\init}{in}
\DeclareMathOperator{\Spec}{Spec}
\DeclareMathOperator{\Proj}{Proj}
\DeclareMathOperator{\conv}{conv}
\DeclareMathOperator{\aff}{aff}
\DeclareMathOperator{\Trop}{Trop}
\DeclareMathOperator{\Dr}{Dr}
\DeclareMathOperator{\rk}{rk}
\DeclareMathOperator{\cone}{cone}
\newcommand{\cA}{\mathcal A}
\newcommand{\cB}{\mathcal B}
\newcommand{\cD}{\mathcal D}
\newcommand{\cE}{\mathcal E}
\newcommand{\cG}{\mathcal G}
\newcommand{\cI}{\mathcal I}
\newcommand{\cL}{\mathcal L}
\newcommand{\cR}{\mathcal R}
\newcommand{\cZ}{\mathcal Z}
\newcommand{\bC}{\mathbb{C}}
\newcommand{\bP}{\mathbb{P}}
\newcommand{\bR}{\mathbb{R}}
\newcommand{\bZ}{\mathbb{Z}}
\newcommand{\one}{\textbf{\fontfamily{qcs}\selectfont 1}}
\newcommand{\al}{\langle}
\newcommand{\ar}{\rangle}
\newcommand{\bs}{\backslash}
\newcommand{\wt}{\widetilde}
\newcommand{\bfK}{\mathbf{K}}
\newcommand{\bfC}{\mathbf{C}}
\newcommand{\bfJ}{\mathbf{J}}
\newcommand{\suchthat}{\, : \,}
\newcommand{\op}{\mathrm{op}}
\DeclareMathOperator{\Sec}{Sec}
\DeclareMathOperator{\subd}{subd}
\DeclareMathOperator{\Grob}{Gr\ddot{o}b}
\DeclareMathOperator{\Aff}{Aff}
\DeclareFontFamily{U}{mathx}{\hyphenchar\font45}
\DeclareFontShape{U}{mathx}{m}{n}{
      <5> <6> <7> <8> <9> <10>
      <10.95> <12> <14.4> <17.28> <20.74> <24.88>
      mathx10
      }{}
\DeclareSymbolFont{mathx}{U}{mathx}{m}{n}
\DeclareMathSymbol{\bigtimes}{1}{mathx}{"91}
\DeclareMathSymbol{\lsb@l}{\mathalpha}{letters}{`l}
\mathchardef\newbracket=\mathcode`)
\mathchardef\newcomma=\mathcode`,
\date{May 2025}
\subjclass{
Primary: 
05E40.  	
Secondary:
14T15,  	
18A30,  	
05E14,  	
13A02,  	
52B20.  	
}
\keywords{Initial ideal, regular subdivision, finite limit, secondary fan, Gr\"obner fan}
\title[Regular subdivisions and initial ideals]{Regular subdivisions, bounds on initial ideals,\\ and categorical limits}
\author{George Balla}
\address[GB]{Max Planck Institute for Mathematics in the Sciences, Leipzig}
\email[GB]{george.balla@mis.mpg.de}
\author{Daniel Corey}
\address[DC]{Department of Mathematical Sciences, University of Nevada, Las Vegas}
\email[DC]{daniel.corey@unlv.edu}
\author{Igor Makhlin}
\address[IM]{Technische Universit\"at Berlin}
\email[IM]{iymakhlin@gmail.com}
\author{Victoria Schleis}
\address[VS]{Institute for Advanced Study, Princeton, and Durham University, United Kingdom}
\email[VS]{victoria.m.schleis@durham.ac.uk}
\begin{document}

\begin{abstract}
Several known constructions relate initial degenerations of projective toric varieties and Grassmannians to regular subdivisions of appropriate point configurations. We define a general framework which allows for partial generalizations of these constructions to arbitrary projective schemes (as well as their very affine parts). We associate a point configuration $A$ with any homogeneous ideal $I$. We obtain upper and lower bounds on every initial ideal of $I$, defining them in terms of the regular subdivision of $A$ given by the same weight. Furthermore, both bounds are interpreted categorically via (co)limits over the face poset of the subdivision. We also investigate when these bounds are exact, showing that the respective weights form a subfan in the secondary fan of $A$.
\end{abstract}

\maketitle

\section*{Introduction}

Initial ideals have their origins in computer algebra, specifically in the work of Buchberger, and play a central role in the area of Gr\"obner theory.
A particularly important construction of initial ideals relies on a choice of a real weight vector \cite{MoraRobbiano, Robbiano}. This construction allows for a key application to algebraic geometry, where initial ideals serve as standard tools for constructing flat degenerations \cite[Section~15.8]{Eisenbud}. 
We refer to~\cite{Stu1} for an introduction to various aspects of initial ideals.

On the other hand, for any real point configuration a real weight vector determines a regular subdivision. 
Defined in its modern form in~\cite{GKZ}, this concept can be traced to the work of Voronoy. It provides a standard method of building point-set triangulations and is widely applied in discrete and computational geometry, algebraic combinatorics and other fields, see \cite{Lafforgue2003, MaclaganSturmfels, Ziegler}.  A detailed introduction can be found in~\cite{DeLoeraRambauSantos}. 

The main goal of this paper is to initiate a general study of connections between initial ideals and regular subdivisions 
that would give a more systematic approach to several results obtained in special cases, see below.

Let $I\subset \bC[x_1,\ldots,x_m]$ be an ideal and let $A$ be a configuration of $m$ points in a real vector space. As indicated above, a weight vector $w\in \bR^{m}$ is a required input in order to form an initial ideal $\init_w I$ of $I$ and a regular subdivision $\subd_w A$ of $A$. Under mild assumptions, both the set of weight vectors providing a given initial ideal and the set of weight vectors inducing a given regular subdivision form a polyhedral cone. In both cases the collection of all such cones is a complete polyhedral fan in $\bR^{m}$: the Gr\"obner fan $\Grob I$ and the secondary fan $\Sec A$.


In special settings these observations are known to be deeply connected. First, let $A$ be a lattice point configuration and $I$ be the respective homogeneous toric ideal. In~\cite[Chapter 8]{Stu1} it is shown that $\Grob I$ refines $\Sec A$, i.e. the initial ideal determines the subdivision uniquely. Moreover, results in~\cite{Stu1, Zh} provide an elegant geometric interpretation: the irreducible components of the zero set of $\init_w I$ are the toric varieties of the maximal cells of $\subd_w A$.

Now, let $I_{k,n}$ be the Pl\"ucker ideal defining the Grassmannian $\Gr(k,n)$ and let $A$ consist of the vertices of the hypersimplex $\Delta(k,n)$. A particularly important subfan of $\Grob I_{k,n}$ is the tropical Grassmannian $\Trop I_{k,n}$. An important subfan of  $\Sec A$ is the Dressian $\Dr(k,n)$, parameterizing matroid subdivisions of $\Delta(k,n)$ \cite{HerrmannJensenJoswigSturmfels}. In~\cite{Speyer, Tev} it is shown that every cone of $\Trop I_{k,n}$ is contained in a cone of $\Dr(k,n)$, providing a morphism between the two subfans.  Subsequently,~\cite{Cor} establishes a relationship between the initial degeneration and the matroid subdivision determined by $w\in\Trop I_{k,n}$: the very affine scheme defined by $\init_w I_{k,n}$ admits a closed immersion into a certain finite limit of a diagram determined by the cells of $\subd_w A$.

We provide a general framework that encompasses both of these types of results and realize these two setting as dual to each other. 
To begin, we associate a point configuration $\cA(I)$ with any homogeneous ideal $I$. One may view $\cA(I)$ as a projection of the vertices of the unit simplex of $\bR^{m}$ onto the lineality space of $\Grob I$. For the Pl\"ucker ideal this is the vertices of the hypersimplex, and for a homogeneous toric ideal $I$ it is the point configuration defining $I$. 

The first main result we describe concerns bounds on $\init_{w}I$ by ideals associated to regular subdivisions of $\cA(I)$. To the subdivision $\Theta = \subd_{w}\cA(I)$ we associate an ideal $I_{w}$ defined as a sum of ideals over the cells of $\Theta$. Dually, to the subdivision $\Theta^{*} = \subd_{-w}\cA(I)$  we associate an ideal $I^{w}$ defined as an intersection of ideals over the cells of $\Theta^*$.

\begin{customthm}{A}[cf.\ Theorems~\ref{sumcontained} and \ref{thm:intersectioncontains}]\label{thm:A}
    In $\bC[x_1,\dots,x_m]$, we have the inclusion of ideals
    \begin{equation*}
        I_{w} \subseteq \init_{w}I \subseteq I^{w}.
    \end{equation*}
\end{customthm}

\noindent If $I=I_{k,n}$, then the finite limit in \cite{Cor} may be realized as the very affine scheme cut out by the ideal $I_{w}$. If $I$ is toric, then $I^{w}$ is the radical of $\init_{w}I$ as shown in \cite{Zh}. 

Next, with every cell $\Delta$ of $\Theta$ we associate a ring $R_\Delta$.
For faces $\Delta\subset \Gamma$ of $\Theta$, we have an embedding $R_{\Delta} \to R_{\Gamma}$. Dually, we associate rings $R^\Delta$ with cells $\Delta$ of $\Theta^*$ and obtain surjections $R^{\Gamma}\to R^{\Delta}$ for $\Delta\subset \Gamma$. Thus, the assignments $\Delta\mapsto R_{\Delta}$ and $\Delta\mapsto R^{\Delta}$ define (category-theoretical) diagrams $\cR$ and $\cR^{*}$ indexed by the face posets of $\bfJ(\Theta)$ and $\bfJ(\Theta^{*})$ of $\Theta$ and $\Theta^{*}$ respectively. We identify the limits of these diagrams to be the quotient rings of $I_w$ and $I^w$ respectively. Set $R_{w}=\bC[x_1,\dots,x_m]/I_w$ and $R^{w}=\bC[x_1,\dots,x_m]/I^w$. 

\begin{customthm}{B}[cf.\ Theorems~\ref {thm:colimit-hat} and~\ref{thm:limit}]
    We have canonical isomorphisms
    \begin{equation*}
        R_{w} \cong \varinjlim_{\bfJ(\Theta)} R_{\Delta} 
        \quad \text{and} \quad 
        R^{w} \cong \varprojlim_{\bfJ(\Theta^*)^\mathrm{op}} R^{\Delta}.
    \end{equation*}
\end{customthm}

The ideals $I_{w}$ and $I^{w}$ are algorithmically easier  to compute than $\init_{w} I$, and a natural question is to identify when the inclusions in Theorem~\ref{thm:A} are equalities. 
We define
\begin{equation*}
    \Omega(I) = \{w\in \bR^{m}  \suchthat I_{w} = \init_{w}I\} 
    \quad \text{and} \quad
    \Omega^{*}(I) =  \{w\in \bR^{m}  \suchthat I^{w} = \init_{w}I\}.
\end{equation*}
These sets may be used to quantify the accuracy of the bounds of $\init_w I$ provided by 
$I_{w}$ and $I^{w}$, respectively. 
Further, we prove a structural result about these sets. 

\begin{customthm}{C}[cf. Theorems~\ref{issubfan} and~\ref{dualissubfan}]
    The set $\Omega(I)$ is the support of a subfan of $\Sec(A)$, and $\Omega^{*}(I)$ is the support of a subfan of $-\Sec(A)$. 
\end{customthm}

Finally, we extend some of these results to the setting of very affine schemes. Suppose that the ideal $I$ contains no monomials  and hence defines a very affine scheme.  We associate to each cell $\Delta$ of $\Theta$ a very affine scheme $X_{\Delta}^{\circ}$. Then, for faces $\Delta\subset \Gamma$ we have morphisms $X_{\Gamma}^{\circ} \to X_{\Delta}^{\circ}$. The assignment $\Delta\mapsto X_{\Delta}^{\circ}$ defines a finite diagram of very affine schemes and we may form their limit $\varprojlim_{\bfJ(\Theta)^\mathrm{op}}X_{\Delta}^{\circ}$ in the category of affine schemes. Next, we establish a connection to initial degenerations by defining natural morphism of schemes
\begin{equation}
\label{eq:very-affine-inw-limit}
    \init_{w} X^{\circ} \to \varprojlim_{\bfJ(\Theta)^\mathrm{op}}X_{\Delta}^{\circ}.
\end{equation}
which, under mild hypotheses, is a closed immersion (cf.\ Theorem \ref{thm:very-affine-initial-deg}). This generalizes the main theorem in \cite{Cor}. Under these same mild hypotheses, we also show that the limit in (1) can be realized as the very affine scheme defined by $I_w$ (cf.\ Theorem~\ref{thm: limit is Xw}).



In Appendix \ref{appendix}, we consider again the Grassmannian. 
In general, initial degenerations of the very affine $\Gr(k,n)^{\circ}$ may be reducible or nonsmooth, and the closed immersion in Equation \eqref{eq:very-affine-inw-limit} might not be an isomorphism \cite{Cor, CoreyLuber-SMRS}.  
We identify a new infinite collection of initial degenerations $\init_w\Gr(k,n)^{\circ}$ that are smooth and irreducible, and where the above closed immersion is an isomorphism. This collection corresponds to matroids that are realizable over $\bC$, paving, connected, and inductively connected as defined in  \cite{LiwskiMohammadi}.

\subsection*{Computations}

In Appendix \ref{appendixB}, we obtain an algorithm for computing the point configuration $\cA(I)$ associated with a given homogeneous ideal $I$.
We implement this algorithm using the OSCAR package \cite{OSCAR-book, OSCAR} for Julia \cite{Julia-2017} and create functions that compute $I_w$, $I^{w}$, $\Omega(I)$, and $\Omega^*(I)$. 
The code can be found at the following github repository: 
\begin{equation}\label{githublink}\tag{$*$}
\text{\url{https://github.com/dmg-lab/InitialIdealsRegularSubdivisions/}}
\end{equation}

\subsection*{Acknowledgments}
G.B., D.C. and V.S. were partially supported by SFB-TRR project ``Symbolic Tools in Mathematics and their Application'' (project- ID 286237555). 
During a part of this project, V.S. was a member of the Institute for Advanced Study, funded by the Charles Simonyi Endowment. Further, V.S. was partially funded by the UKRI FLF \emph{Computational Tropical Geometry}  MR/Y003888/1.

\section{Point configurations and their subdivisions}

In the following we define a correspondence between point configurations and vector subspaces that plays an important role in our results. We then briefly recall the construction of the secondary fan of a point configuration and interpret the associated subspace as the lineality space of this fan.

\subsection{Point configurations and subspaces}\label{subspaceconfig}

For the entirety of this paper we fix a finite labeling set $E$. 
An \textit{$E$-labeled point configuration} in the $\bR$-vector space $V$
is a sequence of points $A = (a_e)_{e\in E}$ in $V$. We denote by $\aff A$ the affine hull of $A$,  the dimension of $A$ is $\dim A=\dim(\aff A)$.
Two point configurations $A=(a_e)_{e\in E}$ and $B=(b_e)_{e\in E}$ are \textit{affinely equivalent} if there is an affine bijection $\varphi\colon \aff A\to \aff B$ 
such that $\varphi(a_e)=b_e$ for all $e\in E$. Denote by $[A]_{\aff}$ the 
class of all point configurations affinely equivalent to $A$. 

\begin{definition}
Given a point configuration $A=(a_e)_{e\in E}$ in the $\bR$-vector space $V$, define the following linear subspace of $\bR^{E}$:
\begin{multline*}
    \cL(A) = \{w \in \bR^{E}  \suchthat \text{there exists an affine function } f\colon V\to \bR\text{ such that }\\
    w_e = f(a_{e}) \text{ for all }e\in E  \}.
\end{multline*}
\end{definition} Evidently, $\dim\cL(A)=\dim A+1$ and $\cL(A)$ contains the all-ones vector  $\one = (1,\dots,1)$. Furthermore, if $A$ and $B$ are affinely equivalent, then $\cL(A)=\cL(B)$. In fact, we show that the assignment $[A]_{\aff}\mapsto \cL(A)$ defines a bijection between affine equivalence classes and subspaces containing $\one$. The inverse bijection is obtained as follows. Let $\{u_{e}\}_{e\in E}$ be the standard basis of $(\bR^{E})^*$. 
\begin{definition}
Given a linear subspace $L\subset\bR^E$, let $u^{L}_{e}\in L^*$ denote the restriction of $u_{e}$ to $L$. Define the following $E$-labeled point configuration in $L^*$:
\begin{equation*}
    \cA(L) = (u^{L}_{e})_{e\in E}.
\end{equation*}
\end{definition}

We now establish the aforementioned bijection.
\begin{proposition}\label{bijection}
The map $L\mapsto[\cA(L)]_{\aff}$ is a bijection from the set of subspaces of $\bR^E$ containing $\one$ to the set of affine equivalence classes of $E$-labeled point configurations. 
The inverse bijection takes $[A]_{\aff}$ to $\cL(A)$.
\end{proposition}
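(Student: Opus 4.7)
The plan is to verify the two maps $L \mapsto [\cA(L)]_{\aff}$ and $[A]_{\aff} \mapsto \cL(A)$ are well-defined and mutually inverse. Well-definedness of the second map follows immediately from the definition of affine equivalence: if $\varphi \colon \aff A \to \aff B$ is an affine bijection with $\varphi(a_e) = b_e$, then $f \mapsto f \circ \varphi^{-1}$ is a bijection between affine functions on $\aff A$ and on $\aff B$, showing $\cL(A) = \cL(B)$. Well-definedness of the first map needs nothing, since $\cA(L)$ is a specific point configuration.

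For the identity $\cL(\cA(L)) = L$, the plan is to use the canonical identification $L = (L^*)^*$. An affine function $f\colon L^* \to \bR$ is of the form $f(\phi) = \phi(v) + c$ for a unique $v \in L$ and $c \in \bR$. Evaluating at $u^L_e$ gives $f(u^L_e) = v_e + c$, so the vector of values is $v + c\one$. Since both $v$ and $\one$ lie in $L$, this lies in $L$, and conversely every $w \in L$ arises in this way by choosing $v = w$ and $c = 0$.

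For the identity $[\cA(\cL(A))]_{\aff} = [A]_{\aff}$, set $L = \cL(A)$. The plan is to exhibit a concrete affine bijection $\varphi\colon \aff A \to \aff \cA(L)$ sending $a_e$ to $u^L_e$. Let $\mathrm{Aff}(\aff A)$ denote the space of affine functions on $\aff A$. The restriction map $f \mapsto (f(a_e))_{e\in E}$ defines a linear isomorphism $\mathrm{Aff}(\aff A) \xrightarrow{\sim} L$, whose dual gives an identification $L^* \cong \mathrm{Aff}(\aff A)^*$ under which $u^L_e$ corresponds to the evaluation functional $\mathrm{ev}_{a_e}$. The assignment $a \mapsto \mathrm{ev}_a$ is an affine map $\aff A \to \mathrm{Aff}(\aff A)^*$, and it is injective because affine functions separate points of $\aff A$. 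Its image equals $\aff \{u^L_e\}_{e \in E} = \aff \cA(L)$ because the $\mathrm{ev}_{a_e}$ affinely span the image and $\{a_e\}$ affinely span $\aff A$. Thus this map restricts to the desired affine bijection $\varphi$.

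The main technical point to handle with care is the double identification $L \cong \mathrm{Aff}(\aff A)$ and $L^* \cong \mathrm{Aff}(\aff A)^*$ in the last step; once this is unwound, every claim becomes a direct computation, so there is no essential obstacle.
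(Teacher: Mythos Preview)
Your proof is correct and follows essentially the same approach as the paper: both verify the two identities $\cL(\cA(L)) = L$ and $[\cA(\cL(A))]_{\aff} = [A]_{\aff}$, and in each case the second identity is established via the evaluation embedding $a \mapsto \mathrm{ev}_a$ of $\aff A$ into $\cL(A)^*$. The only minor difference is in the first identity, where the paper shows $L \subset \cL(\cA(L))$ and finishes with a dimension count (using that all $u^L_e$ lie in the hyperplane $\{\phi : \phi(\one) = 1\}$), whereas you compute both inclusions directly.
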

\begin{proof}
Consider a linear subspace $L$ containing $\one$. The linear function on $L^*$ given by $w\in L$ takes value $w_e$ in $u_e^L$. Therefore, $w\in\cL(\cA(L))$ and $L\subset\cL(\cA(L))$. However, all $u_e^L$ take value 1 in $\one$, hence $\dim\cA(L)\le\dim L-1$ and $\dim\cL(\cA(L))\le\dim L$. We deduce that $L=\cL(\cA(L))$.  

Conversely, for a point configuration $A$, the space $\cL(A)$ can be viewed as the space of affine functions on $\aff A$. We obtain an embedding of $\aff A$ into $\cL(A)^*$ that maps $w$ to the functional taking value $f(w)$ in an affine function $f\in\cL(A)$. By definition, this embedding maps $a_e\in A$ to $u_e^{\cL(A)}$. Hence, $A$ is affinely equivalent to $\cA(\cL(A))$.
\end{proof}

We give two alternative interpretations of this correspondence. First, we write $\Aff(k, E)$ for the set of affine equivalence classes of $E$-labeled point configurations of dimension $k$. Let $W$ denote the quotient $\bR^E/\bR\one$ by the diagonal and let $q:\bR^E\to W$ be the quotient map. For $k\in[0,|E|-1]$, consider the Grassmannian $\Gr(k,W)$ of $k$-subspaces in $W$. Proposition~\ref{bijection} can now be restated as follows.
\begin{cor}
\label{cor:Grassmannian-to-affine}
   We have a bijection between $\Gr(k,W)$ and $\Aff(k,E)$ that takes $q(L)$ to $[\cA(L)]_{\aff}$ for every $(k+1)$--subspace $L\subset\bR^E$ containing $\one$. 
\end{cor}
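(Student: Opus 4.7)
The plan is to combine Proposition~\ref{bijection} with the elementary observation that the quotient map $q\colon\bR^E\to W$ sets up a dimension-shifting bijection between subspaces of $\bR^E$ containing $\one$ and subspaces of $W$.

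First, I would verify the quotient-side bijection: the assignment $L\mapsto q(L)$ takes $(k+1)$-dimensional subspaces $L\subset\bR^E$ containing $\one$ to $k$-dimensional subspaces of $W$, with inverse $L'\mapsto q^{-1}(L')$. The only point worth checking is the dimension drop, which follows because $\ker q = \bR\one\subseteq L$, so $\dim q(L)=\dim L - 1$. This gives a bijection between the source indexing set of Proposition~\ref{bijection} (restricted to dimension $k+1$) and $\Gr(k,W)$.

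Second, I would invoke Proposition~\ref{bijection}, which identifies subspaces of $\bR^E$ containing $\one$ with affine equivalence classes $[A]_{\aff}$ via $L\mapsto[\cA(L)]_{\aff}$. Because $\dim\cA(L)=\dim L - 1$ (as noted immediately after the definition of $\cL(A)$), this bijection restricts to one between $(k+1)$-subspaces containing $\one$ and $\Aff(k,E)$.

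Composing the two bijections sends $q(L)\in\Gr(k,W)$ to $[\cA(L)]_{\aff}\in\Aff(k,E)$, which is exactly the stated correspondence. There is no real obstacle here; the entire argument is a matter of combining a previously established bijection with a standard correspondence under the quotient $\bR^E\twoheadrightarrow W$, with dimension bookkeeping to confirm the restriction to the $k$-dimensional strata.
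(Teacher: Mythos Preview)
Your proposal is correct and matches the paper's approach exactly: the paper presents this corollary simply as a restatement of Proposition~\ref{bijection} via the quotient $q$, and your argument spells out precisely the two ingredients (the bijection $L\mapsto q(L)$ between $(k+1)$-subspaces containing $\one$ and $\Gr(k,W)$, and the dimension count $\dim\cA(L)=\dim L-1$) needed to make that restatement rigorous.
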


Second, we describe an explicit method of obtaining a point configuration affinely equivalent to $\cA(L)$. For a subspace $L\subset\bR^E$ this is done by projecting the coordinate vectors orthogonally onto $L$. Indeed, the standard scalar product on $\bR^E$ induces a linear isomorphism $\iota_L\colon L\to L^*$. Let $\pi_L: \bR^E\to L$ denote the orthogonal projection onto $L$ and let $(\varepsilon_e)_{e\in E}$ be the standard basis in $\bR^E$.
\begin{proposition}\label{orthogonalproj}
The isomorphism $\iota_L$ takes the point configuration $(\pi_L(\varepsilon_e))_{e\in E}$ to $\cA(L)$. In particular, the two point configurations are affinely equivalent.
\end{proposition}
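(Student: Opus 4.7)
The plan is to verify the identity $\iota_L(\pi_L(\varepsilon_e)) = u_e^L$ pointwise, by evaluating both sides on an arbitrary $w \in L$. This reduces the proposition to a one-line computation using the defining property of orthogonal projection.

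First, I would unpack the three objects involved. The isomorphism $\iota_L \colon L \to L^*$ induced by the standard scalar product sends $v \in L$ to the linear functional $w \mapsto \langle v, w\rangle$. The orthogonal projection $\pi_L \colon \bR^E \to L$ is characterized by the property that $\langle \pi_L(v), w\rangle = \langle v, w\rangle$ whenever $w \in L$. Finally, the functional $u_e^L \in L^*$ is the restriction of the coordinate functional $w \mapsto w_e$ on $\bR^E$, so $u_e^L(w) = w_e$ for $w \in L$.

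With these in hand, the key step is the computation
\begin{equation*}
    \iota_L(\pi_L(\varepsilon_e))(w) = \langle \pi_L(\varepsilon_e), w\rangle = \langle \varepsilon_e, w\rangle = w_e = u_e^L(w)
\end{equation*}
for every $w \in L$, where the second equality uses that $w \in L$ together with the defining property of $\pi_L$. This shows $\iota_L(\pi_L(\varepsilon_e)) = u_e^L$, so $\iota_L$ carries the configuration $(\pi_L(\varepsilon_e))_{e \in E}$ to $\cA(L) = (u_e^L)_{e \in E}$ pointwise.

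For the final assertion, since $\iota_L$ is a linear isomorphism of $\bR$-vector spaces it restricts to an affine bijection between the affine hulls of $(\pi_L(\varepsilon_e))_{e \in E}$ and $(u_e^L)_{e \in E}$, and the pointwise matching just established is precisely the condition for affine equivalence. There is no real obstacle here; the proof is essentially a bookkeeping exercise once one recognizes that $\iota_L \circ \pi_L$ applied to $\varepsilon_e$ is nothing but the coordinate functional $u_e$ restricted to $L$.
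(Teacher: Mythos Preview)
Your proof is correct and follows essentially the same approach as the paper: both verify $\iota_L(\pi_L(\varepsilon_e)) = u_e^L$ by computing that the scalar product of $\pi_L(\varepsilon_e)$ with any $w\in L$ equals $w_e$. Your version simply spells out the definitions more explicitly.
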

\begin{proof}
The scalar product of $\pi_L(\varepsilon_e)$ with $w\in L$ is equal to $w_e$, hence $\iota_L(\pi_L(\varepsilon_e))=u_e^L$.    
\end{proof}


\subsection{Regular subdivisions and secondary fans}

A comprehensive treatment of the following discussion can be found in~\cite{DeLoeraRambauSantos}.

For an $E$-labeled point configuration $A$ in $V$, a \textit{point subconfiguration} of $A$ is any point configuration of the form $A'=(a_e)_{e\in E'}$ with labeling set $E'\subseteq E$; we write $A'\subseteq A$. A \textit{face} of $A$ is a point subconfiguration of $A$ that has the form $A_f$ for some $f\in  V^*$, where
\[A_f=\left(a_{e'} \in A  \suchthat f(a_{e'}) \leq f(a_{e}) \text{ for all } e\in E \right).\]
Given $w\in \bR^{E}$, the \textit{lifted point configuration} $A^{w}$ in $ V \oplus \bR$ is $A^{w} = (a_e\oplus w_e)_{e\in E}$.
For $f\in V^{*}$ we may consider the face $A^w_{f\oplus 1}$ of $A^w$ where $f\oplus 1 \in (V\oplus \bR)^*$ evaluates as 
\[(f\oplus 1)(v\oplus w)=f(v)+w.\] 
Point configurations of the form $A^w_{f\oplus 1}$ are also known as the \textit{lower faces} of $A^w$. They should be thought of as those faces of $A^w$ that are visible from points $v\oplus w$ with $w\ll 0$.

Now let $\pi\colon V\oplus \bR \to V$ denote the natural projection.
Then $\Delta_f=\pi(A^w_{f\oplus 1})$ is a point subconfiguration of $A$ for any $f\in V^*$.
The \textit{regular subdivision} of $A$ induced by $w$, denoted by $\subd_{w}(A)$, is the set of all point subconfigurations of the form $\Delta_f$. 
These point subconfigurations are also known as the \textit{cells} of the regular subdivision. 

The obtained regular subdivision is a polyhedral subdivision (see~\cite[Definition 2.3.1]{DeLoeraRambauSantos}), this has several consequences. First, the intersection of two cells is again a cell. 
Furthermore, a point subconfiguration $\Gamma\subset\Delta_f$ is also in $\subd_w(A)$ if and only if $\Gamma$ is a face of $\Delta_f$. 

Next, note that there may be points in $A$ that are not contained in any cell of $\subd_w(A)$. However, for any $a_e\in A$ there exists a unique minimal cell $\Delta$ such that $a_e$ lies in the convex hull $\conv(\Delta)$. Conversely, if $a_e \in \conv(\Delta)$, but $a_e\notin \Delta$, then $a_e$ is not contained in any cell (if $a_e \in \conv(\Delta)$ and $a_e$ is contained in a cell, then it is contained in a face of $\Delta$, and thus in $\Delta$).  

Given a regular subdivision $\Theta$ of $A$, define
\begin{equation*}
    \cone(A, \Theta)^{\circ} = \{w\in \bR^{E}  \suchthat \subd_{w}(A) = \Theta \}, \quad \cone(A, \Theta) = \overline{\cone(A, \Theta)^{\circ}}.
\end{equation*}
The sets $\cone(A, \Theta)^\circ$ are relatively open polyhedral cones in $\bR^{E}$. Given two regular subdivisions $\Theta_1$ and $\Theta_2$, we say that $\Theta_1$ \emph{refines} $\Theta_2$ if every cell of $\Theta_1$ is contained in a cell of $\Theta_2$. Then, $\cone(A, \Theta_2)$ is a face of $\cone(A, \Theta_1)$ if and only if the subdivision $\Theta_1$ refines the subdivision $\Theta_2$. 
The \textit{secondary fan} of $A$, denoted by $\Sec A$, is the complete polyhedral fan in $\bR^{E}$ whose cones are the $\cone(A, \Theta)$ as $\Theta$ ranges over all regular subdivisions of $A$. 

It is clear that affinely equivalent $E$-labeled point configurations have the same secondary fan. Furthermore, the lineality space of $\Sec{A}$ is the set of those $w$ for which the subdivision $\subd_w(A)$ is trivial, i.e.\ consists of the faces $A$. This happens if and only if there is an affine function $f$ on $\bR^E$ such that $w_e=f(a_e)$ for all $e\in E$. We obtain
\begin{proposition}\label{prop:lineality}
The lineality space of $\Sec A$ is $\cL(A)$.
\end{proposition}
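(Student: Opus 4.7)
The plan is to identify the lineality space of $\Sec A$ with the closed cone $\cone(A,\Theta_{\mathrm{triv}})$ of the trivial regular subdivision $\Theta_{\mathrm{triv}}$ (whose cells are exactly the faces of $A$), and then to characterize precisely those weights $w$ inducing $\Theta_{\mathrm{triv}}$.

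First I would recall the general fact that in a complete polyhedral fan the lineality space coincides with the intersection of all maximal cones, equivalently the unique minimal cone of the fan. From the paragraph preceding the proposition, a face of $\cone(A,\Theta_1)$ is $\cone(A,\Theta_2)$ where $\Theta_1$ refines $\Theta_2$. Hence the lineality space corresponds to the coarsest regular subdivision of $A$, which is $\Theta_{\mathrm{triv}}$: every regular subdivision $\Theta$ refines the trivial one since each cell $\Delta_f\in\Theta$ has the form $\pi(A^w_{f\oplus 1})$ and thus sits inside $\pi(A^0_{f\oplus 1})$, a face of $A$.

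Next I would prove the equivalence $\subd_w(A)=\Theta_{\mathrm{triv}}$ iff $w\in\cL(A)$. If the subdivision is trivial, then $A$ itself appears as a cell, so there exists $f\in V^*$ with $\pi(A^w_{f\oplus 1})=A$. By the definition of lower face, this forces $f(a_e)+w_e$ to attain its minimum at every $e\in E$, so it equals some constant $c$. Then $w_e=c-f(a_e)$, exhibiting $w$ as the restriction to $A$ of the affine function $c-f$, i.e.\ $w\in\cL(A)$. Conversely, if $w_e=g(a_e)$ for an affine function $g=\ell+c$ with $\ell\in V^*$, then setting $f:=-\ell$ gives $(f\oplus 1)(a_e\oplus w_e)=c$ for every $e$, so $A^w_{f\oplus 1}=A^w$ and $A$ is a cell of $\subd_w(A)$. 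For an arbitrary $f'\in V^*$ one computes $(f'\oplus 1)(a_e\oplus w_e)=(f'-\ell)(a_e)+c$, whose minimum over $e\in E$ is attained exactly on the face $A_{f'-\ell}$ of $A$. Thus every cell of $\subd_w(A)$ is a face of $A$, and $\subd_w(A)=\Theta_{\mathrm{triv}}$.

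Combining the two steps gives that the lineality space of $\Sec A$ equals $\cone(A,\Theta_{\mathrm{triv}})=\cL(A)$. The only minor obstacle is being careful with the translation between the refinement order on regular subdivisions and the face order on the cones of $\Sec A$; but this is precisely the structural description of $\Sec A$ recalled just before the statement, so the argument reduces to the clean affine-function computation above.
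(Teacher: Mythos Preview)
Your argument is correct and mirrors the paper's brief justification given in the paragraph immediately preceding the proposition: the lineality space is the cone of the trivial subdivision, and $\subd_w(A)$ is trivial exactly when $w$ arises from an affine function on $A$. Two cosmetic slips to fix: the inclusion $\pi(A^w_{f\oplus 1})\subseteq\pi(A^0_{f\oplus 1})$ fails in general (the refinement claim is immediate anyway, since every cell of $\Theta$ is a subconfiguration of $A$, the top cell of $\Theta_{\mathrm{triv}}$), and in the converse computation one obtains $(f'+\ell)(a_e)+c$ rather than $(f'-\ell)(a_e)+c$.
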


\section{Point configurations associated to ideals}

\subsection{Initial ideals and Gr\"obner fans}\label{sec:initial-ideals-groebner-fans}

We recall the construction of the Gr\"obner fan of a homogeneous ideal. See, e.g.\ \cite[Chapter 1]{Stu1} for details.

Consider the polynomial ring $S=\bC[x_e]_{e\in E}$. A vector $w\in\bR^E$ can be viewed as an $\bR$-grading on $S$ that takes the value $w_e$ on $x_e$. A polynomial $p\in S\bs\{0\}$ is the sum of its homogeneous components with respect to this grading. The \textit{initial form} of $p$, denoted by $\init_w p$, is its lowest (nonzero) homogeneous component, i.e.\ the sum of those terms in $p$ which have the least grading with respect to $w$. For an ideal $I\subset S$ its \textit{initial ideal} $\init_w I$ is the ideal spanned by all $\init_w p$ with $p\in I$. 

For the remainder of the paper we fix an ideal $I\subset S$ that is homogeneous with respect to the standard grading $\one$ and does not contain the irrelevant ideal. We consider the projective scheme $X = \Proj S/I$.

Initial ideals play an important role in algebraic geometry by providing flat degenerations. Specifically, for any $w\in\bR^E$ there is a flat degeneration of $X$ to $\Proj(S/\init_w I)$: a flat family over $\mathbb A^1$ with the fiber over 0 isomorphic to $\Proj(S/\init_w I)$ and all other fibers isomorphic to $X$. Such a degeneration is known as an \textit{initial degeneration} or a \textit{Gr\"obner degeneration}.

Given an initial ideal $J$ of $I$, define
\begin{equation*}
    \cone(I, J)^\circ = \{w\in \bR^{E}  \suchthat \init_w I=J\}, \quad \cone(I,J) = \overline{\cone(I, J)^\circ}.
\end{equation*}
The sets $\cone(I, J)^\circ$ are relatively open polyhedral cones in $\bR^{E}$. The \textit{Gr\"obner fan} of $I$, denoted by $\Grob{I}$, is the complete polyhedral fan in $\bR^{E}$ whose cones are $\cone(I, J)$ as  $J$ ranges over all initial ideals of $I$.  

The set of those $w$ for which $\init_w I$ does not contain monomials is the \textit{tropicalization} of $I$, denoted by $\Trop I$. It is the support of a subfan of $\Grob I$.

\subsection{From ideals to point configurations}

The observations in Section \ref{subspaceconfig} let us associate a point configuration with the ideal $I$. The lineality space of $\Grob{I}$, which we denote $\cL(I)$, consists of all those $w\in\bR$ for which $\init_w I=I$, i.e.\ $I$ is homogeneous with respect to the grading $w$. In particular, $\cL(I)$ contains $\one$. We denote the point configuration $\cA(\cL(I))$ by $\cA(I)$. 

The configuration $\cA(I)$ can also be understood in terms of torus actions.
The torus $(\bC^\times)^E$ acts naturally on $S$ by automorphisms: $t(x_e)=t_ex_e$ for $t\in (\bC^\times)^E$. We have an algebraic subtorus $T\subset (\bC^\times)^E$ consisting of those $t$ for which $t(I)=I$. This subtorus is generated by the one-dimensional subtori $\{(s^{w_e})_{e\in E}\}_{s\in\bC^\times}$ with $w\in\cL(I)\cap\bZ^E$. Here note that since the subspace $\cL(I)\subset\bR^{E}$ is rational, $\bZ^E\cap\cL(I)$ is a lattice of rank $\dim\cL(I)$. This lattice is the cocharacter lattice of $T$. Let $M\subset \cL(I)^*$ denote the dual lattice, i.e.\ the character lattice of $T$. The points of $\cA(I)$ lie in $M$ --- the point labeled by $e$ is the character mapping $t\in T$ to $t_e$. This allows us to view $\cA(I)$ as a lattice point configuration.


We now give our two main motivating examples. 

\begin{example}\label{toricex}
Let $A=(a_e)_{e\in E}$ be an $E$-labeled point configuration in $\bR^m$ whose points lie in the lattice $\bZ^m$.
Let $I_A$ denote the homogeneous toric ideal defined by $A$, i.e.\ the kernel of the homomorphism 
\begin{equation*}
    S \to \bC[t,z_1^{\pm 1},\dots,z_m^{\pm 1}],\quad  x_e\mapsto tz_1^{(a_e)_1}\dots z_m^{(a_e)_m}.
\end{equation*}
The ideal $I_A$ is spanned by all binomials $\prod x_e^{\alpha_e}-\prod x_e^{\beta_e}$ with $\sum \alpha_e=\sum \beta_e$ and $\sum \alpha_ea_e=\sum \beta_ea_e$, which implies that $\cL(I_A)=\cL(A)$. Thus, $\cA(I_A)$ is affinely equivalent to $A$ by Proposition~\ref{bijection}. 

Conversely, every homogeneous toric (i.e.\ prime binomial) ideal $I\subset S$ coincides with $I_{\cA(I)}$, the latter defined with respect to any choice of basis in the lattice $M$. Indeed, $I$ arises as the kernel of a monomial map, meaning that $I=I_A$ for some lattice point configuration $A$. By the above, $A$ is affinely equivalent to $\cA(I)$. Hence, $I_{\cA(I)}=I_A$.
\end{example}

\begin{example}\label{hypersimplexex}
Choose integers $1\le k<n$ and let $E$ consist of integer tuples $(i_1,\dots,i_k)$ such that 
\begin{equation*}
1\le i_1<\dots<i_k\le n.
\end{equation*}
The space  $\bC^{E}$ can be identified with the exterior power $\wedge^k\bC^n$,
which lets us consider the Pl\"ucker embedding of the Grassmannian $\Gr(k,n)\hookrightarrow \bP(\bC^E)$. The ring $S$ is the homogeneous coordinate ring of $\bP(\bC^E)$, and the \emph{Pl\"ucker ideal} $I_{k,n}\subset S$ is the vanishing ideal of $\Gr(k,n)$.
The space $\cL(I_{k,n})\subset\bR^E$ is $n$-dimensional and is spanned by the vectors $w_1,\dots,w_n$, where $(w_i)_e=1$ if the tuple $e\in E$ contains $i$ and $(w_i)_e=0$ otherwise. 

Denote the points in $\cA(I_{k,n})$ by $a_e$ for $e\in E$. Consider the basis in $\cL(I_{k,n})^*$ dual to $w_1,\dots,w_n$. With respect to this dual basis, the point $a_e\in\cL(I_{k,n})^*$ is the indicator vector of the tuple $e$. 
Thus, $\cA(I_{k,n})$ consists of vertices of the hypersimplex $\Delta(k,n)$.
\end{example}

\section{Bounds on the initial ideal}
\label{sec:bounds-on-initial-ideal}

In this section we fix a point $w\in\bR^E$ and study the initial ideal $\init_w I$. We construct a lower bound on this ideal in terms of the regular subdivision $\subd_w\cA(I)$ and an upper bound in terms of the subdivision $\subd_{-w}\cA(I)$. As before, we write $a_e$ to denote the element of $\cA(I)$ labeled by $e\in E$.

\subsection{The lower bound}
\label{sec:lower-bound}

Let $\Theta$ denote the regular subdivision $\subd_w\cA(I)$. As a first observation we note that $\init_w(I)=I$ if and only if $\Theta$ is trivial as both conditions are equivalent to $w\in\cL(I)$. 

Now consider any cell $\Delta$ of $\Theta$. Denote $\bC[\Delta]=\bC[x_e]_{a_e\in \Delta}$, we view $\bC[\Delta]$ as a subring of $S$. Let $I_\Delta\subset\bC[\Delta]$ be the image of $I$ under the surjection $\rho_\Delta\colon S\to\bC[\Delta]$ that maps every $x_e$ with $e\in \Delta$ to itself and all other $x_e$ to 0. Note that $I_\Delta$ is an ideal in $\bC[\Delta]$ and is also a subspace of $S$. For two cells one of which is a face of the other, the respective ideals satisfy a particularly simple relation: 

\begin{proposition}\label{intersection}
For cells $\Delta\subset \Gamma$ of $\Theta$ one has $I_\Delta=I_\Gamma\cap\bC[\Delta]$.
\end{proposition}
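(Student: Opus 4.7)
My plan is to prove the two inclusions separately, with the lower one being routine and the upper one carrying the main content.

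For $I_\Gamma \cap \bC[\Delta] \subseteq I_\Delta$, I would exploit the factorization of $\rho_\Delta$ through $\rho_\Gamma$. Introduce the surjection $\pi \colon \bC[\Gamma] \to \bC[\Delta]$ defined by $\pi(x_e) = x_e$ if $a_e \in \Delta$ and $\pi(x_e) = 0$ if $a_e \in \Gamma \setminus \Delta$. Then $\rho_\Delta = \pi \circ \rho_\Gamma$, and $\pi$ fixes every element of $\bC[\Delta]$ pointwise. So if $q \in I_\Gamma \cap \bC[\Delta]$ and $q = \rho_\Gamma(p)$ with $p \in I$, then $q = \pi(q) = \pi(\rho_\Gamma(p)) = \rho_\Delta(p) \in I_\Delta$.

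The reverse inclusion $I_\Delta \subseteq I_\Gamma \cap \bC[\Delta]$ hinges on the face structure of $\Theta$. Since $\Delta \subset \Gamma$ are both cells of $\Theta$, Section~1.2 tells us that $\Delta$ is a face of $\Gamma$. Under the identification of the ambient space $V = \cL(I)^*$ with its double dual, the dual $V^* \cong \cL(I)$, so there exists $w' \in \cL(I)$ and a constant $c \in \bR$ such that $w'_e = c$ for all $a_e \in \Delta$ and $w'_e > c$ for all $a_e \in \Gamma \setminus \Delta$. The crucial consequence is that $w' \in \cL(I)$ forces $I$ to be homogeneous with respect to the $w'$-grading on $S$ (this is the very definition of $\cL(I)$ as the lineality space of $\Grob I$), and since $\rho_\Gamma$ sends each $x_e$ either to itself or to $0$, it preserves $w'$-degree; hence $I_\Gamma = \rho_\Gamma(I)$ is also $w'$-graded.

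Now given $q \in I_\Delta$, I would decompose it into $\one$-homogeneous pieces (legal because $I_\Delta$ is $\one$-graded) and reduce to the case that $q$ is $\one$-homogeneous of some degree $d$. Pick $p \in I$ that is $\one$-homogeneous of degree $d$ with $\rho_\Delta(p) = q$. Every monomial $\prod_e x_e^{\alpha_e}$ appearing in $\rho_\Gamma(p)$ has $\supp(\alpha) \subseteq \Gamma$ and $\sum_e \alpha_e = d$, so its $w'$-degree $\sum_e \alpha_e w'_e$ is at least $cd$, with equality precisely when $\supp(\alpha) \subseteq \Delta$. Therefore the $w'$-degree-$cd$ component of $\rho_\Gamma(p)$ is exactly the sum of monomials of $\rho_\Gamma(p)$ supported on $\Delta$, which is nothing but $\rho_\Delta(p) = q$. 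Since $\rho_\Gamma(p) \in I_\Gamma$ and $I_\Gamma$ is $w'$-graded, this component $q$ belongs to $I_\Gamma$, and trivially $q \in \bC[\Delta]$.

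The main obstacle---really a key insight rather than a technical hurdle---is translating the combinatorial face relation $\Delta \subset \Gamma$ inside $\Theta$ into a separating vector $w'$ that lies in the lineality space $\cL(I)$ of $\Grob I$. Once that translation is made, the built-in $w'$-homogeneity of $I$ supplies enough graded structure to recover $q$ itself as the lowest $w'$-graded component of $\rho_\Gamma(p)$, and the whole argument reduces to a clean manipulation of graded pieces.
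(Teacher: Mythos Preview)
Your proof is correct and follows essentially the same strategy as the paper's: both arguments pick a separating functional $w'\in\cL(I)$ witnessing that $\Delta$ is a face of $\Gamma$, then use the induced $w'$-grading on $I_\Gamma$ to recognize $q=\rho_\Delta(p)$ as a homogeneous component (the initial form) of $\rho_\Gamma(p)\in I_\Gamma$. The only cosmetic difference is that the paper normalizes $w'$ so that $w'_e=0$ on $\Delta$, which lets it write $\rho(p)=\init_{w'}p$ directly without first reducing to $\one$-homogeneous pieces, whereas you keep the constant $c$ and compensate with that reduction.
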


\begin{proof}
The affine span of $\cA(I)$ is the subspace of functionals in $\cL(I)^*$ taking value $1$ on $\one$ and, thus, does not contain $0$. Since $\Delta$ is necessarily a face of $\Gamma$, we may choose $u\in \cL(I)$ such that for $a\in \Delta$ one has $a(u)=0$ and for $b\in \Gamma\bs\Delta$ one has $b(u)>0$.

Let $\rho$ denote the surjection from $\bC[\Gamma]$ to $\bC[\Delta]$ taking all $x_e$ with $e\in\Gamma\bs\Delta$ to 0. In particular, $\rho(I_\Gamma)=I_\Delta$. We obviously have $I_\Gamma\cap\bC[\Delta]=\rho(I_\Gamma\cap\bC[\Delta])\subset I_\Delta$. For the reverse inclusion, we consider $p\in I_\Gamma$ and check that $\rho(p)\in I_\Gamma$. Indeed, either $\rho(p)=0$ or $\rho(p)=\init_u p$ due to our choice of $u$. But $I_\Gamma$ is homogeneous with respect to $u\in\cL(I)$, hence $\init_u p\in I_\Gamma$ and, therefore, $\rho(p)\in I_\Gamma\cap\bC[\Delta]$ in both cases.
\end{proof}

In particular, this lets us verify the following key fact only for maximal cells, i.e.\ those of dimension $\dim A$.

\begin{proposition}\label{initialcontains}
If $\Delta$ is a cell of $\Theta$, then $I_\Delta\subset\init_w I$.
\end{proposition}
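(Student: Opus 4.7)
The plan is, for each cell $\Delta$ of $\Theta=\subd_w\cA(I)$, to produce an explicit weight vector $v \in \bR^E$ such that $\init_v I = \init_w I$ and such that the monomials of minimum $v$-grade in any $p \in I$ are precisely those supported on $\Delta$. The key mechanism is the identification of $\cL(I)$ as the lineality space of the Gr\"obner fan $\Grob I$: adding any element of $\cL(I)$ to $w$ leaves the initial ideal unchanged.

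First I would unpack the defining data of a cell. Since $\Delta$ is a cell of $\subd_w \cA(I)$, by definition there exists $f \in V^*$ with $\Delta = \pi(\cA(I)^w_{f \oplus 1})$, where $V = \cL(I)^*$ is the ambient space of $\cA(I)$. Under the canonical identification $V^* = \cL(I)$, one has $a_e(f) = f_e$, so $a_e \in \Delta$ if and only if $f_e + w_e$ attains the minimum of $\{f_{e'} + w_{e'}\}_{e' \in E}$. Set $v = w + f$. Since $f \in \cL(I)$ lies in the lineality of $\Grob I$, we have $\init_v I = \init_w I$, and since $\one \in \cL(I)$ as well, one may further translate by an appropriate multiple of $\one$ and assume $v_e = 0$ for $a_e \in \Delta$ while $v_e > 0$ for $a_e \notin \Delta$.

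Next, given any $p = \sum_\alpha c_\alpha x^\alpha \in I$, the $v$-grade of the monomial $x^\alpha$ equals $\sum_e v_e \alpha_e$, which is $0$ precisely when $\supp \alpha \subseteq \{e : a_e \in \Delta\}$ and strictly positive otherwise. Hence the $v$-initial form of $p$ is exactly $\rho_\Delta(p)$ whenever the latter is nonzero, so $\rho_\Delta(p) = \init_v p \in \init_v I = \init_w I$; when $\rho_\Delta(p) = 0$ the inclusion is trivial. Ranging over $p \in I$ yields $I_\Delta = \rho_\Delta(I) \subseteq \init_w I$. The main obstacle I anticipate is purely conceptual rather than computational: keeping the duality between $V$ and $V^* = \cL(I)$ straight so that the defining functional $f$ of a lower face of $\cA(I)^w$ is recognised as living inside the lineality space of $\Grob I$, which is exactly what permits absorbing $f$ into $w$ at no cost. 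Once this shift is in place, the positivity of $v$ off $\Delta$ forces the $v$-initial form of every element of $I$ to coincide with its image under $\rho_\Delta$, and the claim follows at once; alternatively, one could reduce to maximal cells using Proposition~\ref{intersection}, but the argument above applies uniformly.
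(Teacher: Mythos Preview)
Your proof is correct and follows essentially the same approach as the paper: translate $w$ by an element of $\cL(I)$ so that the new weight vanishes on $\Delta$ and is strictly positive off $\Delta$, then observe that $\rho_\Delta(p)$ coincides with the initial form. The only difference is that the paper first reduces to maximal cells via Proposition~\ref{intersection} and then asserts the existence of such a translate, whereas you extract the translate directly from the defining functional $f$ of the lower face; this makes your argument apply uniformly to all cells without invoking Proposition~\ref{intersection}.
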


\begin{proof}
By Proposition~\ref{intersection} we may assume $\Delta$ is maximal, i.e.\ $\conv\Delta$ has dimension $\dim \cL(I)-1$. We may choose $w'\in\bR^E$ such that $w-w'\in \cL(I)$ and $w'_e=0$ for all $e\in \Delta$. Note that $\subd_{w'}\cA(I)=\Theta$ and that $w'_e>0$ for all $e\notin \Delta$. Also note that $\init_{w'} I=\init_{w} I$. Now consider $p\in I_\Delta$, suppose $p=\rho_\Delta(q)$ for $q\in I$. If $p\neq 0$, we must have $p=\init_{w'}(q)\in \init_{w} I$.
\end{proof}

For a cell $\Delta$ of $\Theta$ let $\wt I_\Delta$ denote the ideal in $S$ generated by $I_\Delta\subset\bC[\Delta]\subset S$. We set 
\begin{equation}\label{eq:I_wdef}
I_w=\sum_{\Delta\text{ cell of }\Theta}\wt I_\Delta.
\end{equation}
In other words, $I_w$ is the ideal in $S$ generated by the subspaces $I_\Delta$. By Proposition \ref{intersection} it suffices to sum over maximal cells in~\eqref{eq:I_wdef}.  Proposition~\ref{initialcontains} gives the following lower bound on the initial ideal. 

\begin{theorem}\label{sumcontained}
The initial ideal $\init_w I$ contains $I_w$.
\end{theorem}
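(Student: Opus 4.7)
The plan is to derive the theorem directly from Proposition~\ref{initialcontains} together with the definition of $I_w$. The work has essentially been done: Proposition~\ref{initialcontains} establishes that $I_\Delta \subset \init_w I$ as subsets of $S$ for every cell $\Delta$ of $\Theta$, and $I_w$ is defined as the sum of the $S$-ideals generated by these subspaces.

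First I would observe that $\init_w I$ is by definition an ideal of $S$. Then, for each cell $\Delta$ of $\Theta$, Proposition~\ref{initialcontains} gives $I_\Delta \subset \init_w I$. Since $\wt I_\Delta$ is (by definition) the smallest ideal of $S$ containing $I_\Delta$, and $\init_w I$ is such an ideal, we conclude $\wt I_\Delta \subset \init_w I$. Summing over all cells $\Delta$ of $\Theta$ yields
\[
I_w \;=\; \sum_{\Delta \text{ cell of }\Theta} \wt I_\Delta \;\subset\; \init_w I,
\]
as desired.

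There is no real obstacle here; the theorem is a formal consequence of Proposition~\ref{initialcontains}. The substantive content sits in that proposition (and in Proposition~\ref{intersection}, which reduces it to maximal cells via the choice of a weight $w' \in w + \cL(I)$ vanishing on $\Delta$ and strictly positive off $\Delta$). One could, if desired, note additionally via Proposition~\ref{intersection} that it suffices to sum only over the maximal cells of $\Theta$, but this is not needed for the inclusion itself.
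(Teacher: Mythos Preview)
Your proof is correct and matches the paper's approach exactly: the paper also presents Theorem~\ref{sumcontained} as an immediate consequence of Proposition~\ref{initialcontains}, with no separate argument given. Your write-up simply makes explicit the one-line deduction (each $I_\Delta\subset\init_w I$, hence each $\wt I_\Delta\subset\init_w I$, hence the sum is contained) that the paper leaves implicit.
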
 

This theorem provides a surjection from $S/I_w$ to $S/\init_w I$, which geometrically can be interpreted as an upper bound on the initial degeneration.

\begin{cor}
There is a closed immersion of $\Proj(S/\init_w I)$ into $\Proj(S/I_w)$.
\end{cor}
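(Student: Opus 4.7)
The proof of this corollary is essentially a one-line formal consequence of Theorem \ref{sumcontained} together with the standard translation between surjections of graded rings and closed immersions of their $\Proj$. The plan is therefore to unpack this translation and verify the minor compatibilities it requires.

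First I would note that both $I_w$ and $\init_w I$ are homogeneous with respect to the standard grading $\one$. For $\init_w I$ this is because $I$ is standard-graded and the operation $\init_w$ preserves standard-homogeneity: if $p \in I$ is $\one$-homogeneous, then all of its monomials have the same standard degree, and the subsum of minimal $w$-weight is still standard-homogeneous; since such $p$ generate $I$, their initial forms generate $\init_w I$. For $I_w$ this is because each $\rho_\Delta\colon S \to \bC[\Delta]$ is a map of standard-graded rings, so each $I_\Delta$, and hence each $\wt I_\Delta$, and hence the sum $I_w$, is standard-homogeneous.

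Next, by Theorem \ref{sumcontained} the inclusion $I_w \subseteq \init_w I$ of standard-homogeneous ideals induces a surjective homomorphism of standard-graded $\bC$-algebras
\begin{equation*}
    S/I_w \twoheadrightarrow S/\init_w I.
\end{equation*}
Applying the $\Proj$ functor then yields the desired closed immersion
\begin{equation*}
    \Proj(S/\init_w I) \hookrightarrow \Proj(S/I_w),
\end{equation*}
provided the irrelevant ideal $(x_e)_{e \in E}$ is not contained in $\init_w I$ (so that the target Proj makes sense as a projective scheme); this follows from our standing assumption that $I$ does not contain the irrelevant ideal, since $\init_w$ preserves the standard-degree decomposition and hence cannot introduce the variables $x_e$ themselves.

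There is no real obstacle here: the only thing to be careful about is the homogeneity verification for $\init_w I$, which is mildly subtle because $w$ is an additional (generally non-standard) grading, but the argument above disentangles the two gradings cleanly.
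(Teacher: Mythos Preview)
Your argument is correct and matches the paper's approach: the paper simply records this corollary as an immediate geometric reformulation of the surjection $S/I_w \twoheadrightarrow S/\init_w I$ given by Theorem~\ref{sumcontained}, without writing out the homogeneity checks you supply. Your extra care in verifying that both ideals are standard-homogeneous and that $\init_w I$ does not contain the irrelevant ideal (via preservation of graded dimension) is a reasonable unpacking of what the paper leaves implicit.
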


\begin{example}\label{prinipaltoricex}
Let $E=\{1,2,3,4\}$ and $I=\al x_1x_2-x_3x_4\ar$. The subspace $\cL(I)$ is cut out in $\bR^E$ by the equation $w_1+w_2=w_3+w_4$. To determine the point configuration $\cA(I)$ up to affine equivalence, we apply Proposition~\ref{orthogonalproj}. The orthogonal projections $\pi_{\cL(I)}(\varepsilon_i)$ are the vertices of a square with the first and second point diagonally opposite each other. There are two nontrivial regular subdivisions of $\cA(I)$. One of them is $\Theta_1 = \subd_w\cA(I)$ for $w$ satisfying  $w_1+w_2<w_3+w_4$. It has maximal two maximal cells: $\Delta_3=(a_1,a_2,a_3)$ and $\Delta_4=(a_1,a_2,a_4)$. We see that in this case $I_{\Delta_3}$ and $I_{\Delta_4}$ are the ideals generated by $x_1x_2$ in the respective rings. We obtain 
$I_w=\al x_1x_2\ar=\init_w I$.
The other nontrivial subdivision $\Theta_2$ is given by $w$ with $w_1+w_2>w_3+w_4$ and also has two maximal cells. In this case we similarly obtain $I_w=\init_w I=\al x_3x_4\ar$. Consequently, for the chosen ideal $I$ the bound in Theorem~\ref{sumcontained} is exact for any $w\in\bR^E$.
\end{example}
\begin{example}\label{doublepointex}
A simple example of a proper inclusion $I_w\subset\init_wI$ is given by $E=\{1,2\}$, the ideal $I=\al x_1x_2(x_1-x_2)\ar$ and $w$ with $w_1\neq w_2$. In this case $\cA(I)$ consists of two coinciding points but $\Theta$ has only one cell consisting of a single point. This leads to $I_w=0$.
\end{example}

We now consider the setting of Example~\ref{hypersimplexex} in the special case $k=2$, i.e.\ the Pl\"ucker ideal $I_{2,n}$ defining $\Gr(2,n)$. The following result can be deduced using the explicit description of $\Trop I_{2,n}$ given in~\cite{SS} and the description of the corresponding subdivisions of $\Delta(2,n)$ given in~\cite{Kapranov1993}. It shows that the lower bound provided by Theorem~\ref{sumcontained} can be exact in a nontrivial setting.
\begin{theorem}
\label{Gr2nsame}
If $I=I_{2,n}$ and $w\in\Trop I$, then $I_w=\init_w I$.
\end{theorem}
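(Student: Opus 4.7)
The plan is to combine three classical ingredients: the universal Gr\"obner basis property of the three-term Pl\"ucker relations for $I_{2,n}$, the Speyer--Sturmfels description of $\Trop I_{2,n}$ as the space of phylogenetic trees on $n$ leaves~\cite{SS}, and Kapranov's realization of matroid subdivisions of $\Delta(2,n)$ from the same trees~\cite{Kapranov1993}. Since Theorem~\ref{sumcontained} already gives $I_w\subseteq\init_w I$, the real work is the reverse inclusion.

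First I would reduce to the level of generators: because the three-term relations
\[
g_{ijkl}=p_{ij}p_{kl}-p_{ik}p_{jl}+p_{il}p_{jk},\qquad 1\le i<j<k<l\le n,
\]
form a universal Gr\"obner basis of $I_{2,n}$ (classical, see~\cite{Stu1}), the initial ideal $\init_w I_{2,n}$ is generated by the initial forms $\init_w g_{ijkl}$, so it suffices to prove $\init_w g_{ijkl}\in I_w$ for every four-subset $\{i,j,k,l\}\subseteq [n]$.

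Next, I would use the Speyer--Sturmfels/Kapranov picture: the subdivision $\Theta=\subd_w\cA(I_{2,n})$ is a matroid subdivision of $\Delta(2,n)$ arising from a tree $T$ with leaf set $[n]$, whose maximal cells are the matroid polytopes $P_v$ of the rank-$2$ matroids $M_v$ indexed by internal vertices $v$ of $T$, with parallel classes given by the leaf sets of the connected components of $T\setminus v$. For a fixed four-subset $\{i,j,k,l\}$ the tropical Pl\"ucker condition forces the minimum of $w_{ij}+w_{kl},\ w_{ik}+w_{jl},\ w_{il}+w_{jk}$ to be attained at least twice, producing two cases according to the restriction of $T$ to these four leaves. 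In the \emph{star} case all three sums coincide, $\init_w g_{ijkl}=g_{ijkl}$, and there is an internal vertex $v$ of $T$ at which $i,j,k,l$ lie in four distinct components of $T\setminus v$; then all six pairs from $\{i,j,k,l\}$ are bases of $M_v$ and $\rho_{P_v}(g_{ijkl})=g_{ijkl}\in I_{P_v}\subseteq I_w$. In the \emph{quartet} case, say with topology $\{i,j\}|\{k,l\}$, one has $w_{ik}+w_{jl}=w_{il}+w_{jk}<w_{ij}+w_{kl}$, so $\init_w g_{ijkl}=-p_{ik}p_{jl}+p_{il}p_{jk}$; taking $v$ to be the endpoint on the $\{i,j\}$-side of any internal edge of $T$ separating $\{i,j\}$ from $\{k,l\}$, the leaves $k$ and $l$ lie in a common parallel class of $M_v$, so $\{k,l\}$ is not a basis of $M_v$ while $\{i,k\},\{i,l\},\{j,k\},\{j,l\}$ all are. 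The image $\rho_{P_v}(g_{ijkl})=-p_{ik}p_{jl}+p_{il}p_{jk}$ then coincides with $\init_w g_{ijkl}$ and lies in $I_{P_v}\subseteq I_w$.

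The step I expect to require most care is not any individual computation but the bookkeeping of conventions: the paper uses the minimum convention for initial forms, while the standard realization of $\Trop I_{2,n}$ via trees and Kapranov's matroid subdivisions is usually stated for the opposite (max/tree-metric) convention. One must carefully check that the quartet topology $\{i,j\}|\{k,l\}$ on the restricted tree corresponds to $w_{ij}+w_{kl}$ being the strict \emph{maximum} of the three sums (so that the binomial initial form drops $p_{ij}p_{kl}$, matching the term killed by $\rho_{P_v}$). Once this sign alignment and the universal Gr\"obner basis property are in place, the rest of the argument is a direct case check.
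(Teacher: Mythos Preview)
Your proposal is correct and follows essentially the same route as the paper's proof: both reduce to showing that each initial form $\init_w p_{i,j,k,l}$ lies in some $I_{\Delta_v}$, invoke the tree description of $\Trop I_{2,n}$ from \cite{SS} and Kapranov's identification of maximal cells with internal vertices of the tree, and then split into the star case (trinomial survives, degree-four vertex works) and the quartet case (binomial survives, a degree-three vertex on the separating path works). Your explicit attention to the min/max sign alignment is well placed and is the one point where the paper's own write-up is terser than it perhaps should be.
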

\begin{proof}
Consider $w\in\Trop I$. By the results of \cite{SS}, there is a unique \textit{phylogenetic tree} $T$ corresponding to $w$. This is a tree with leaves labeled by $\{1, \ldots, n\}$, no vertices of degree 2, and a real weight attached to every edge which is positive for non-leaf edges. The coordinate $w_{i,j}$ is equal to the sum of weights of edges lying on the (acyclic) path between leaves $i$ and $j$. 

Now, recall that $I$ is generated by the Pl\"ucker relations $p_{i,j,k,l}=x_{i,j}x_{k,l}-x_{i,k}x_{j,l} + x_{i,l}x_{j,k}$ for all quadruples $1\le i<j<k<l\le n$. By~\cite{SS}, the initial forms $\init_w p_{i,j,k,l}$ generate $\init_w I$. The above characterization of $T$ allows us to express $\init_w p_{i,j,k,l}$ as follows. Consider the minimal subtree of $T$ that contains the leaves $i$, $j$, $k$ and $l$. This subtree has four leaves with all remaining vertices having degree two except for either 
\begin{enumerate}[label=(\roman*)]
\item one vertex of degree four or
\item two vertices of degree three. 
\end{enumerate}
If (i) one holds, then $\init_w p_{i,j,k,l}=p_{i,j,k,l}$. If (ii) holds, then $\init_w p_{i,j,k,l}$ is a binomial that
contains the monomial $x_{i',j'}x_{k',l'}$ (where $\{i',j',k',l'\}=\{i,j,k,l\}$) if and only if the path between leaves $i'$ and $j'$ and the path between leaves $k'$ and $l'$ intersect in at least one vertex.

As discussed in Example~\ref{hypersimplexex}, the point configuration $\cA(I)$ is formed by the vertices of the hypersimplex $\Delta(2,n)$. The results of Kapranov (\cite[Theorem 1.3.6]{Kapranov1993}) provide the following description of $\Theta=\subd_w\cA(I)$. The maximal cells correspond to non-leaf vertices of $T$. The cell $\Delta_v$ corresponding to a non-leaf vertex $v$ consists of those points $a_{i,j}$ for which the path between leaves $i$ and $j$ passes through $v$. 

To show that $I_w=\init_w I$, we check that every $\init_w p_{i,j,k,l}$ lies in $I_{\Delta_v}$ for some $v$. Indeed, if (i) holds, we may choose $v$ as the vertex of degree four. If (ii) holds, we may choose $v$ as either of the two vertices of degree three. In both cases we will have 
\[\init_w p_{i,j,k,l}=\rho_{\Delta_v}(p_{i,j,k,l})\in I_{\Delta_v}.\qedhere\]

\end{proof}

\subsection{The upper bound}\label{sec:upper-bound}

Let $\Theta^*$ denote the regular subdivision $\subd_{-w}\cA(I)$. In other words, $\Theta^*$ is the subdivision given by the \textit{upper} faces of the lifted point configuration $A^w$. For a cell $\Delta$ of $\Theta^*$ we consider the ideal $I^\Delta=I\cap\bC[\Delta]$ in $\bC[\Delta]$, the latter ring defined as above. We prove a series of statements concerning these ideals that should be viewed as dual to the statements in the previous subsection. For cells $\Delta\subset \Gamma$ of $\Theta^*$ let $\rho_\Delta^\Gamma$ denote the projection from $\bC[\Gamma]$ to $\bC[\Delta]$ taking $x_e\in\bC[\Delta]$ to itself and all other variables to zero.

\begin{proposition}\label{prop:projection}
For cells $\Delta\subset \Gamma$ of $\Theta^*$ one has $\rho_\Delta^\Gamma(I^\Gamma)=I^\Delta$. 
\end{proposition}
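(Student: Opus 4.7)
The plan is to prove the two inclusions separately, mirroring the strategy of Proposition~\ref{intersection}. The direction $I^{\Delta} \subseteq \rho_{\Delta}^{\Gamma}(I^{\Gamma})$ is immediate: any $p \in I^{\Delta} = I \cap \bC[\Delta]$ automatically lies in $\bC[\Delta] \subset \bC[\Gamma]$ and therefore in $I \cap \bC[\Gamma] = I^{\Gamma}$, and since $\rho_{\Delta}^{\Gamma}$ restricts to the identity on $\bC[\Delta]$, we have $p = \rho_{\Delta}^{\Gamma}(p) \in \rho_{\Delta}^{\Gamma}(I^{\Gamma})$.

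For the reverse inclusion $\rho_{\Delta}^{\Gamma}(I^{\Gamma}) \subseteq I^{\Delta}$, take $p \in I^{\Gamma} = I \cap \bC[\Gamma]$. That $\rho_\Delta^\Gamma(p) \in \bC[\Delta]$ is built into the definition of $\rho_\Delta^\Gamma$, so the only thing to verify is that $\rho_\Delta^\Gamma(p)$ still lies in $I$. For this I would re-use the separation trick from Proposition~\ref{intersection}: since $\Delta$ is a face of $\Gamma$ in the polyhedral subdivision $\Theta^*$ and the affine span of $\cA(I)$ avoids the origin, one can choose $u \in \cL(I)$ with $a_e(u) = 0$ for all $e \in \Delta$ and $a_e(u) > 0$ for all $e \in \Gamma \setminus \Delta$. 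Then every monomial of $p$ whose support lies in $\Delta$ has $u$-grading zero, while every other monomial of $p$ has strictly positive $u$-grading.

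Consequently, either $\rho_\Delta^\Gamma(p) = 0$, in which case there is nothing to check, or $\rho_\Delta^\Gamma(p)$ is precisely the lowest-graded component $\init_u(p)$. Because $u \in \cL(I)$ means $\init_u I = I$, we conclude $\init_u(p) \in I$, and hence $\rho_\Delta^\Gamma(p) \in I \cap \bC[\Delta] = I^{\Delta}$.

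The only genuinely delicate point is establishing the existence of the separating $u \in \cL(I)$: one needs a supporting affine hyperplane of $\conv(\Gamma)$ inside $\aff \cA(I)$ whose intersection with $\conv(\Gamma)$ is exactly $\conv(\Delta)$. Such a hyperplane exists because $\Delta$ is a face of $\Gamma$ in a polyhedral subdivision, and the corresponding affine functional on $\aff \cA(I)$ yields (via Proposition~\ref{bijection}) the desired element of $\cL(I)$. This is the same separation input used in Proposition~\ref{intersection}, except that here it controls surjectivity of $\rho_\Delta^\Gamma$ on $I^\Gamma$ rather than the containment $I_\Gamma \cap \bC[\Delta] \subseteq I_\Delta$, illustrating the duality between the lower- and upper-bound constructions.
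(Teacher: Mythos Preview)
Your proof is correct and follows essentially the same approach as the paper's: both inclusions are handled identically, with the nontrivial direction using a separating $u\in\cL(I)$ to identify $\rho_\Delta^\Gamma(p)$ with $\init_u p$ and then invoking $\init_u I = I$. Your write-up is somewhat more detailed about the existence of $u$, but the argument is the same.
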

\begin{proof}
Evidently, $I^\Delta=\rho_\Delta^\Gamma(I^\Delta)\subset\rho_\Delta^\Gamma(I_\Gamma)$. We show that $\rho_\Delta^\Gamma(p)\in I^\Delta$ for $p\in I^\Gamma$. As in the proof of Proposition~\ref{intersection}, we may consider $u\in\cL(I)$ that vanishes on $\Delta$ and takes positive values in other points of $\Gamma$. 
Then, either $\rho_\Delta^\Gamma(p)=0$ or $\rho_\Delta^\Gamma(p)=\init_u p$. Since $\init_u I=I$, we have $\rho_\Delta^\Gamma(p)\in I^\Delta$ in both cases.
\end{proof}

For a cell $\Delta$ of $\Theta^*$, let $\wt I^\Delta$ denote the ideal in $S$ generated by the subspace $I^\Delta$ and all $x_e$ with $e\notin\Delta$. In other words, $\wt I^\Delta$ is the preimage of $I^\Delta$ under the surjection $\rho_\Delta$, the latter defined as in the previous subsection. Proposition~\ref{prop:projection} implies that $\wt I^\Gamma\subset\wt I^\Delta$ when $\Delta\subset\Gamma$. 
Indeed, an element $p\in I^\Gamma$ obviously lies in the space $\rho_\Delta^\Gamma(p)+\al x_e\ar_{e\notin\Delta}$. However, $\al x_e\ar_{e\notin\Gamma}\subset\wt I^\Delta$, which together with the above proposition provides $\rho_\Delta^\Gamma(p)+\al x_e\ar_{e\notin\Delta}\subset\wt I^\Delta$. 

\begin{proposition}\label{prop:initialcontained}
If $\Delta$ is a cell of $\Theta^{*}$, then $\init_w I\subset\wt I^\Delta$.
\end{proposition}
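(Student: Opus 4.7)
The plan is to mirror the argument of Proposition~\ref{initialcontains}, with a sign flip coming from the fact that $\Theta^{*}$ is defined using the weight $-w$ rather than $w$.

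Since $\Delta$ is a lower face of the lifted configuration $\cA(I)^{-w}$, I would extract a functional $f$ on the ambient space of $\cA(I)$ together with a constant $c \in \bR$ such that $f(a_e) - w_e = c$ for every $e \in \Delta$ and $f(a_e) - w_e > c$ for every $e \notin \Delta$. Setting $v_e = f(a_e) - c$ and $w' = w - v$ then produces a weight satisfying $w'_e = 0$ on $\Delta$ and $w'_e < 0$ off $\Delta$. Note that $v \in \cL(I)$ because it is an affine function evaluated on the points of $\cA(I)$. Compared with the sign pattern used in Proposition~\ref{initialcontains}, the sign off $\Delta$ is reversed; this is precisely the mechanism that converts the lower-bound argument into an upper-bound one. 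Since $v \in \cL(I)$ and $\cL(I)$ is the lineality space of $\Grob I$, we have $\init_w I = \init_{w'} I$, so it suffices to show that $\init_{w'} q \in \wt I^{\Delta}$ for each $q \in I$.

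The rest is a case split. If $q \in \bC[\Delta]$, every monomial of $q$ has $w'$-grading zero, so $\init_{w'} q = q$, which lies in $I \cap \bC[\Delta] = I^{\Delta} \subseteq \wt I^{\Delta}$. Otherwise $q$ has at least one monomial involving some $x_e$ with $e \notin \Delta$; any such monomial has strictly negative $w'$-grading while every purely-$\Delta$ monomial has grading zero. Hence the minimum $w'$-grading among the monomials of $q$ is negative, and every monomial of $\init_{w'} q$ is divisible by some $x_e$ with $e \notin \Delta$, placing $\init_{w'} q$ inside the ideal $\langle x_e \suchthat e \notin \Delta \rangle \subseteq \wt I^{\Delta}$.

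The one step requiring real attention is producing $w'$ with the correct sign pattern from the lower-face description of $\Delta$ and tracking why the minus sign in $-w$ forces $w'_e < 0$ rather than $w'_e > 0$ off $\Delta$. I do not foresee any serious obstacle beyond that bookkeeping: the subsequent case analysis is essentially forced by the resulting dichotomy between grading zero on $\Delta$ and strictly negative grading off $\Delta$.
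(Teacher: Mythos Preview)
Your proposal is correct and follows essentially the same approach as the paper: shift $w$ by an element of $\cL(I)$ to obtain $w'$ vanishing on $\Delta$ and strictly negative off $\Delta$, then split into the cases $q\in\bC[\Delta]$ and $q\notin\bC[\Delta]$. The only minor difference is that the paper first reduces to maximal cells (using $\wt I^{\Gamma}\subset\wt I^{\Delta}$ for $\Delta\subset\Gamma$) before asserting the existence of such a $w'$, whereas your explicit construction from the lower-face functional $f$ works directly for any cell and makes that reduction unnecessary.
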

\begin{proof}
In view of the above, it suffices to consider the case of maximal $\Delta$. Similarly to the proof of Proposition~\ref{initialcontains}, we choose $w'\in w+\cL(I)$ such that $w'_e=0$ for $e\in\Delta$ and $w'_e<0$ (note the sign) otherwise. We have $\init_{w'}I=\init_w I$, for $p\in I$ we check that $\init_{w'}p\in\wt I^\Delta$. Indeed, if $p\in\bC[\Delta]$, then $\init_{w'} p=p\in\wt I^\Delta$. Otherwise, $\init_{w'} p\in\al x_e\ar_{e\notin\Delta}\subset\wt I^\Delta$.
\end{proof}

We now set \[I^w=\bigcap_{\Delta\text{ cell of }\Theta^*} \wt I^\Delta.\] As we have seen, it suffices to intersect over maximal $\Delta$. Proposition~\ref{prop:initialcontained} gives an upper bound on the initial ideal. 
\begin{theorem}\label{thm:intersectioncontains}
The initial ideal $\init_w I$ is contained in $I^w$.
\end{theorem}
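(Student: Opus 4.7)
The plan is essentially a one-line reduction to the already-established Proposition~\ref{prop:initialcontained}. By definition, $I^w = \bigcap_{\Delta} \wt I^\Delta$, where the intersection ranges over the cells of $\Theta^* = \subd_{-w}\cA(I)$. Proposition~\ref{prop:initialcontained} asserts exactly that $\init_w I \subseteq \wt I^\Delta$ for every individual cell $\Delta$ of $\Theta^*$. Since containment is preserved under taking intersections of the right-hand side, we immediately deduce $\init_w I \subseteq I^w$.

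The only minor bookkeeping is to make explicit that the intersection over all cells may be replaced by the intersection over maximal cells, because the inclusions $\wt I^\Gamma \subseteq \wt I^\Delta$ for $\Delta \subseteq \Gamma$ (noted right after Proposition~\ref{prop:projection}) imply that each non-maximal $\wt I^\Delta$ already appears as a superset of some maximal $\wt I^\Gamma$ and hence is redundant in the intersection. This is however purely cosmetic and not needed for the inclusion itself.

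There is no real obstacle: all the substance was packaged into Proposition~\ref{prop:initialcontained}, where the weight $w$ was perturbed within the coset $w + \cL(I)$ to a vector $w'$ that is zero on a chosen maximal cell $\Delta$ of $\Theta^*$ and strictly negative elsewhere, so that every term of $\init_{w'} p$ for $p\in I$ either lies in $\bC[\Delta]$ or is divisible by some $x_e$ with $e \notin \Delta$. Once that proposition is in hand, Theorem~\ref{thm:intersectioncontains} follows in a single sentence, and no further computation is required.
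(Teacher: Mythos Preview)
Your proposal is correct and matches the paper's approach exactly: the paper states the theorem as an immediate consequence of Proposition~\ref{prop:initialcontained}, since $I^w$ is by definition the intersection of the $\wt I^\Delta$. Your additional remark about reducing to maximal cells is also noted in the paper just before the theorem, and as you say it is not needed for the inclusion itself.
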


The theorem provides a surjection from $S/\init_w I$ to $S/I^w$ and its geometric counterpart is a lower bound on the initial degeneration.
\begin{cor}
There is a closed immersion of $\Proj(S/I^w)$ into $\Proj(S/\init_w I)$.
\end{cor}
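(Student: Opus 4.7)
This theorem is an immediate corollary of Proposition~\ref{prop:initialcontained}, which is the substantive content of this subsection. That proposition supplies the individual containment $\init_w I \subset \wt I^\Delta$ for \emph{each} cell $\Delta$ of $\Theta^*$. Since $I^w$ is \emph{defined} as the intersection $\bigcap_{\Delta} \wt I^\Delta$ over all cells of $\Theta^*$, and a subset of every term of an intersection is a subset of the intersection, we conclude $\init_w I \subset I^w$. No further argument is required.

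The situation parallels the lower bound in Theorem~\ref{sumcontained}: there, Proposition~\ref{initialcontains} provides the cellwise containment $I_\Delta \subset \init_w I$, and one forms the sum of the $\wt I_\Delta$ to obtain $I_w \subset \init_w I$. Here we simply dualize: sums become intersections, containments reverse, and the subdivision $\Theta$ is replaced by $\Theta^*$. The logical content is that the proposition has been stated at the level of individual cells precisely so that the theorem follows formally by combining across cells.

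One may further note, as already observed in the text, that the containments $\wt I^\Gamma \subset \wt I^\Delta$ for $\Delta \subset \Gamma$ (which follow from Proposition~\ref{prop:projection}) allow the intersection defining $I^w$ to be taken over maximal cells of $\Theta^*$ only. This observation streamlines computation but is not needed in the proof of the containment itself. Accordingly, there is no main obstacle to overcome: the entire difficulty was handled in Proposition~\ref{prop:initialcontained}, whose proof reduces to the maximal-cell case by selecting a weight $w' \in w + \cL(I)$ that vanishes on $\Delta$ and is strictly negative outside $\Delta$.
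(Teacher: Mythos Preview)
You have written a correct proof of Theorem~\ref{thm:intersectioncontains} (the containment $\init_w I \subset I^w$), but the statement you were asked to prove is the \emph{Corollary} that follows it: the existence of a closed immersion $\Proj(S/I^w)\hookrightarrow\Proj(S/\init_w I)$. Your argument stops at the ideal inclusion and never passes to the geometric statement.

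The missing step is entirely standard and is exactly how the paper treats it (without giving a separate proof): the inclusion of homogeneous ideals $\init_w I \subset I^w$ yields a surjection of graded rings $S/\init_w I \twoheadrightarrow S/I^w$, and applying $\Proj$ to a surjection of graded rings produces a closed immersion in the opposite direction. So your write-up needs only one additional sentence to reach the stated Corollary; the substantive work you did (reducing to Proposition~\ref{prop:initialcontained} and intersecting over cells) matches the paper's approach to the preceding Theorem.
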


\begin{example}\label{ex:upperbound}
Consider the setting of Example~\ref{prinipaltoricex}. If $w_1+w_2>w_3+w_4$, then $\subd_{-w}\cA(I)$ has two maximal cells: the same two point configurations $\Delta_3$ and $\Delta_4$. One sees that both $I^{\Delta_3}$ and $I^{\Delta_4}$ are zero. However, $\wt I^{\Delta_3}=\al x_4\ar$ and $\wt I^{\Delta_4}=\al x_3\ar$ and, therefore, $I^w=\al x_3x_4\ar=\init_w I$. Similarly, for $w_1+w_2<w_3+w_4$ one has $I^w=\al x_1x_2\ar=\init_w I$. Thus, for the chosen $I$ the bound in Theorem~\ref{thm:intersectioncontains} is also exact for all $w$.
\end{example}

In the above example $I$ is toric. The restriction of the construction in this section to toric ideals is particularly well-behaved, this is essentially the setting discussed in~\cite{Zh}. As seen in Example~\ref{toricex}, if $I$ is toric, it can be identified with the toric ideal defined by the lattice point configuration $\cA(I)$. This means that every $I^\Delta$ is the toric ideal defined by the point subconfiguration $\Delta$ and $\Proj(\bC[\Delta]/I^\Delta)=\Proj(S/\wt I^\Delta)$ is the projective toric variety defined by $\Delta$. Hence, $\Proj(S/I^w)$ is a \textit{semitoric variety} whose irreducible components are the toric varieties $\Proj(S/\wt I^\Delta)$ for all maximal cells $\Delta$ of $\Theta^*$. In~\cite{Zh} it is proved that this semitoric variety is precisely the reduction of the initial degeneration $\Proj(S/\init_w I)$. More precisely, we have the following.
\begin{theorem}[{\cite[Theorem 3]{Zh}}]\label{Zhu}
If $I$ is toric, then $I^w$ is the radical of $\init_w I$. In particular, $I=I^w$ if $I$ is toric and $\init_w I$ is radical.
\end{theorem}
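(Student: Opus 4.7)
The plan is to focus on the main identity $I^w = \sqrt{\init_w I}$, since the ``in particular'' clause then follows immediately from this identity together with $\init_w I \subseteq I^w$ (Theorem~\ref{thm:intersectioncontains}). I would establish the identity by proving two inclusions separately; one is short and the other is substantive.

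For $\sqrt{\init_w I} \subseteq I^w$, my plan is to check that each $\wt I^\Delta$ (for $\Delta$ a cell of $\Theta^*$) is a prime ideal containing $\init_w I$. Containment is exactly Theorem~\ref{thm:intersectioncontains}. For primality, since $I$ is toric, Example~\ref{toricex} identifies $I$ with $I_{\cA(I)}$, and the explicit binomial generation of toric ideals forces $I^\Delta = I \cap \bC[\Delta]$ to coincide with the toric ideal $I_\Delta$ of the sub-configuration $\Delta$; a binomial $x^\alpha - x^\beta$ with support in $\Delta$ lies in $I$ precisely when $\sum \alpha_e = \sum \beta_e$ and $\sum \alpha_e a_e = \sum \beta_e a_e$, which is also the defining condition for $I_\Delta$. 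Hence $S/\wt I^\Delta \cong \bC[\Delta]/I_\Delta$ is an integral domain. Intersecting over all cells gives $\sqrt{\init_w I} \subseteq \bigcap_\Delta \wt I^\Delta = I^w$.

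For the reverse inclusion $I^w \subseteq \sqrt{\init_w I}$, I would reduce (via prime avoidance applied to the finite intersection defining $I^w$) to showing that every minimal prime of $\init_w I$ contains some $\wt I^\Delta$ for a maximal cell $\Delta$ of $\Theta^*$. My approach is a dimension and degree count on the Gröbner degeneration. By flatness, $\Proj(S/\init_w I)$ and the projective toric variety $\Proj(S/I)$ share Hilbert polynomials, so they have the same dimension $d = \dim \cA(I)$ and the same degree, namely $d!\cdot \mathrm{vol}(\conv \cA(I))$ in normalized lattice volume. For each maximal cell $\Delta$ of $\Theta^*$, the subscheme $\Proj(S/\wt I^\Delta) \cong \Proj(\bC[\Delta]/I_\Delta)$ is the projective toric variety of $\Delta$, again of dimension $d$ and of degree $d!\cdot \mathrm{vol}(\conv \Delta)$. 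Since the maximal cells of $\Theta^*$ partition $\conv \cA(I)$, these volumes sum to $\mathrm{vol}(\conv \cA(I))$, so the degrees match exactly.

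The main obstacle will be converting this numerical matching into the set-theoretic identity of top-dimensional components. The key input is that $\init_w I$ is stable under the torus $T$ of Section~\ref{sec:initial-ideals-groebner-fans}: since $T \subseteq (\bC^\times)^E$ preserves monomials it preserves the $w$-grading on $S$, so the relation $\init_w(t\cdot p) = t\cdot \init_w p$ together with $t(I)=I$ yields $t(\init_w I) = \init_w I$. Consequently every irreducible component of $\Proj(S/\init_w I)$ is $T$-invariant. To conclude that these are precisely the $\Proj(S/\wt I^\Delta)$, each appearing reduced, one must either invoke the toric dictionary between maximal cells of $\Theta^*$ and $T$-orbit closures in the special fiber of the Gröbner degeneration (in the spirit of Sturmfels~\cite[Chapter~8]{Stu1}), or carry through a multiplicity bookkeeping argument directly to rule out extraneous components or higher multiplicities. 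This combinatorial-to-geometric translation is the substance of~\cite[Theorem~3]{Zh}.
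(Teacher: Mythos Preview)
The paper does not prove this theorem; it is simply cited from \cite[Theorem~3]{Zh} with no proof given. So there is no in-paper argument to compare your proposal against.

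That said, your outline is essentially the standard route to this result and is correct in its broad strokes. The inclusion $\sqrt{\init_w I}\subseteq I^w$ is handled cleanly: for toric $I$ the intersection $I\cap\bC[\Delta]$ is the kernel of the monomial map $\bC[\Delta]\to\bC[t,z^{\pm}]$, hence is the toric ideal of the subconfiguration $\Delta$, so $\wt I^\Delta$ is prime and contains $\init_w I$ by Proposition~\ref{prop:initialcontained}. (Minor terminological quibble: the reduction you call ``prime avoidance'' is rather the elementary fact that a prime containing a finite intersection contains one of the intersectands.)

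For the reverse inclusion there is one concrete gap you should close. The degree count you describe shows that the $\wt I^\Delta$ for maximal cells $\Delta$ are minimal primes of $\init_w I$, that they have multiplicity one, and that there are no \emph{other top-dimensional} minimal primes. It does not by itself rule out lower-dimensional minimal primes of $\init_w I$, and such primes would prevent the equality $\sqrt{\init_w I}=\bigcap_\Delta\wt I^\Delta$. You need to invoke equidimensionality of $\init_w I$; this holds because $I$ is prime, so the total space of the Gr\"obner family over $\mathbb A^1$ is irreducible, and irreducible components of the special fiber of a flat family with irreducible total space have the expected dimension (alternatively, cite Kalkbrener--Sturmfels). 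The $T$-invariance you mention is correct but not, on its own, what rules out these extra components. Once equidimensionality is in place, your degree argument finishes the proof without further appeal to \cite{Zh} or \cite{Stu1}.
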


\section{Categorical limits}

We present categorical counterparts of the results in the previous section. The quotients $S/I_w$ and $S/I^w$ arise as categorical limits, the surjections between these rings and $S/\init_w I$ are the respective mediating morphisms.

\subsection{Diagrams and cones}

We briefly recall basic notions on categorical limits. We follow the treatment in Chapter 5 of \cite{Awodey}.

Let $\bfJ$ and $\bfC$ be categories. A \textit{diagram} of shape $\bfJ$ in $\bfC$ is a functor $\cD \colon \bfJ \to \bfC$. Denote by $\cD_i$ the value of $\cD$ on the object $i$ of $\bfJ$, and $\cD_{\alpha}$ the value of $\cD$ on the morphism $\alpha$.  

A \textit{cone} to a diagram $\cD \colon \bfJ \to \bfC$ consists of an object $C$ in $\bfC$ together with morphisms $c_{i} \colon C\to \cD_i$ such that, for each morphism $\alpha \colon i\to j$ we have $c_{j} = \cD_{\alpha} \circ c_i$. A \textit{limit} of $\cD$ is a cone $(C, \{c_i\})$ such that, if $(C', \{c_{i}'\})$ is another cone to $\cD$, then there is a unique morphism $u\colon C'\to C$ satisfying $c_{i}' = c_{i} \circ u$. We call $u$ the \textit{mediating morphism}. If a limit $(C, \{c_i\})$ of $\cD \colon \bfJ \to \bfC$ exists, then it is unique up to unique isomorphism. In this case, we write $\varprojlim_{\bfJ} \cD_{i}$ to denote the object $C$. 

Dually, colimits are defined in the following way. A \textit{cocone} from a diagram $\cD \colon \bfJ \to \bfC$ is an object $C$ of $\bfC$ together with morphisms $c_{i} \colon \cD_i \to C$ such that, for each morphism $\alpha \colon i\to j$, we have $c_{i} = c_{j} \circ \cD_{\alpha}$. A \textit {colimit} of $\cD$ is a cocone $(C, \{c_i\})$ from $\cD$ such that, if $(C', \{c_{i}'\})$ is another cocone from $\cD$, then there is a unique mediating morphism $v\colon C\to C'$ satisfying $c_{i}' = v \circ c_{i}$. As with limits, if a colimit of $\cD \colon \bfJ \to \bfC$ exists, then it is unique up to unique isomorphism, and we denote the corresponding object by $\varinjlim_{\bfJ} \cD_{i}$.

We are primarily interested in limits and colimits of finite diagrams, i.e.\ diagrams $\cD \colon \bfJ \to \bfC$  where $\bfJ$ is a finite category. (Recall that a $\bfJ$ is a finite category if it has a finite number of objects and there are a finite number of morphisms between each object of $\bfJ$.) Fortunately, there are practical methods to check if a category admits finite limits or colimits. A category has all finite limits if and only if it has pullbacks and a terminal object. Dually, a category has all finite colimits if and only if it has pushouts and an initial object. See Proposition 5.21 and Theorem 5.23 in \cite{Awodey}. 
This applies to the categories we consider in this paper: commutative unital $\bC$-algebras and affine schemes.

\subsection{The colimit}\label{sec:colimit}

Recall the setting and notations of Subsection \ref{sec:lower-bound}.
Denote $R_\Delta=S/I_\Delta$ for a cell $\Delta$ in $\Theta$. Let $\bfJ(\Theta)$ denote the category of cells in $\Theta$ with morphisms given by inclusions of cells. Proposition~\ref{intersection} shows that we have embeddings $R_\Delta\to R_\Gamma$ for inclusions $\Delta\subset\Gamma$. We obtain a diagram $\cR$ of shape $\bfJ(\Theta)$ in the category of commutative unital $\bC$-algebras, that takes $\Delta$ to $R_\Delta$. 

When every $a_e$ lies in some cell of $\Theta$ our categorical interpretation is particularly simple. This is a natural case to consider: for example, this is true when $\cA(I)$ is the vertex set of a convex polytope, as is the case for the Pl\"ucker ideal and many others. Denote $R_w=S/I_w$.

\begin{theorem}\label{thm:colimit}
If every $a_e$ lies in some cell of $\Theta$, then $R_w = \varinjlim_{\bfJ(\Theta)} R_{\Delta}$.
\end{theorem}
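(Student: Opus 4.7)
The plan is to verify the universal property of the colimit directly. First I would construct the cocone from $\cR$ to $R_w$: for each cell $\Delta$, the inclusion $\bC[\Delta]\hookrightarrow S$ composed with the canonical surjection $S\twoheadrightarrow R_w$ sends $I_\Delta$ into the ideal $\wt I_\Delta\subset I_w$, hence descends to a homomorphism $\iota_\Delta\colon R_\Delta\to R_w$. If $\Delta\subset\Gamma$, the compatibility $\iota_\Gamma\circ(R_\Delta\to R_\Gamma)=\iota_\Delta$ is immediate, since both maps arise from the same inclusion $\bC[\Delta]\hookrightarrow S$ modulo $I_w$. So $(R_w,\{\iota_\Delta\})$ is a cocone on $\cR$.

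The heart of the proof is the universal property. Given another cocone $(C',\{c_\Delta\colon R_\Delta\to C'\})$, I would construct a mediating morphism $\bar u\colon R_w\to C'$ by first defining $u\colon S\to C'$ on the generators. Using the hypothesis, choose for each $e\in E$ some cell $\Delta_e\in\Theta$ with $a_e\in\Delta_e$, and set $u(x_e)=c_{\Delta_e}(x_e)$. The key point is that this is independent of the choice: if $a_e\in\Delta\cap\Delta'$, then since regular subdivisions are polyhedral, $\Delta\cap\Delta'$ is itself a cell of $\Theta$ containing $a_e$, and the cocone condition applied to $\Delta\cap\Delta'\subset\Delta$ and $\Delta\cap\Delta'\subset\Delta'$ forces $c_\Delta(x_e)=c_{\Delta\cap\Delta'}(x_e)=c_{\Delta'}(x_e)$. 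Extending multiplicatively yields a ring homomorphism $u\colon S\to C'$.

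Next I would show $u$ kills $I_w$, which reduces to checking $u(p)=0$ for every $p\in I_\Delta$ and every cell $\Delta$, as these elements generate $I_w$ as an $S$-ideal (and $u$ is multiplicative). Such a $p$ lies in $\bC[\Delta]$, and by the argument of the previous paragraph, $u(x_e)=c_\Delta(x_e)$ for every variable $x_e$ appearing in $p$ (i.e., for every $e$ with $a_e\in\Delta$). Hence $u(p)=c_\Delta(p)=0$ because $p\in I_\Delta$ and $c_\Delta$ factors through $R_\Delta$. Therefore $u$ descends to $\bar u\colon R_w\to C'$, and the identity $\bar u\circ\iota_\Delta=c_\Delta$ follows on generators of $\bC[\Delta]$ from the same computation. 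Uniqueness of $\bar u$ is forced, since $R_w$ is generated as a $\bC$-algebra by the images of the $x_e$'s, each of which lies in the image of some $\iota_\Delta$ by hypothesis, so the requirement $\bar u\circ\iota_\Delta=c_\Delta$ determines $\bar u$ on generators.

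The main obstacle is precisely the step where $u$ is shown to be well-defined and to annihilate $I_w$: both rely jointly on the standing hypothesis that every $a_e$ lies in some cell (otherwise $u(x_e)$ has no natural value) and on the intersection-closure of cells in a polyhedral subdivision (which makes the choice of $\Delta_e$ immaterial). Everything else---the cocone axioms, the descent of $u$, and uniqueness---is formal bookkeeping once these points are established.
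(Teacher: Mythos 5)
Your proof is correct and essentially mirrors the paper's approach. The paper derives Theorem~\ref{thm:colimit} as a special case of Theorem~\ref{thm:colimit-hat} rather than proving it directly, but the proof of the latter uses the same strategy you do: build the cocone, then construct a mediating morphism $S\to C'$ on generators and check it kills $I_w$. The only cosmetic difference is that the paper fixes the \emph{unique minimal} cell $\Delta_e$ containing $a_e$ to define $\psi(x_e)$, whereas you allow an arbitrary cell containing $a_e$ and then show well-definedness using the fact that the intersection of two cells is again a cell; these are logically equivalent (your well-definedness argument is precisely what justifies the paper's choice being canonical). One small notational caution: where you write $c_{\Delta_e}(x_e)$ you of course mean the image of $x_e$ under $\bC[\Delta_e]\twoheadrightarrow R_{\Delta_e}\xrightarrow{c_{\Delta_e}} C'$, i.e.\ $c_{\Delta_e}$ applied to what the paper calls $y_{e,\Delta_e}$; this is clear from context but worth making explicit.
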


This theorem is a special case of Theorem~\ref{thm:colimit-hat} and is not proved separately. We do, however, define the respective cocone by specifying a morphism $c_\Delta\colon R_\Delta\to R_w$ for every $\Delta\in\Theta$. Since $I_w$ contains $I_\Delta$ we have $I_\Delta\subset I_w\cap\bC[\Delta]$. Therefore, $R_\Delta$ surjects onto $\bC[\Delta]/(I_w\cap\bC[\Delta])$ while the latter embeds into $R_w$. We define $c_\Delta$ as the composition of these two maps. We obtain a cocone $ (R_w,\{c_\Delta\})$ from $\cR$.

\begin{example}
    Consider the setting of Example \ref{prinipaltoricex}. Recall that we were working over $E  = \{1,2,3,4\}$ and had determined that there were two maximal subdivisions. We consider the subdivision $\Theta_1$, which has two maximal cells $\Delta_3 = (a_1, a_2,a_3)$ and $\Delta_4 = (a_1,a_2,a_4)$, and five edges $(a_1,a_2)$, $(a_2,a_3)$, $(a_1,a_3)$, $(a_2,a_4)$, and $(a_1,a_4)$. Finally, the vertices in $\Theta_1$ are just the points $a_1,a_2,a_3,$ and $a_4$ since no points coincide. 

    In Example \ref{prinipaltoricex}, we already computed that $I_w = \langle x_1x_2\rangle$ and  saw that $ I_{\Delta_3}  = \langle x_1x_2\rangle  \subseteq \mathbb{C}[\Delta_3]$. We thus compute $R_{\Delta_3} = \bC[x_1,x_2,x_3]/\langle x_1x_2\rangle.$ Analogously, for the cell $\Delta_4$ we obtain $R_{\Delta_4} = \bC[x_1,x_2,x_4]/\langle x_1x_2\rangle.$ 
    
    For every non-maximal cell $\Delta$ other than $(a_1,a_2)$, we have $I_\Delta=0$ and $R_\Delta=\bC[\Delta]$. We also have $I_{(a_1,a_2)}=\al x_1 x_2\ar$ and $R_{(a_1,a_2)} = \bC[x_1,x_2]/ \langle x_1x_2 \rangle$. All arrows in the diagram $\cR$ are seen to be embeddings. Furthermore, $I_\Delta=I_w\cap\bC[\Delta]$ for every $\Delta$, therefore, all $c_\Delta$ are also embeddings. To conclude that $R_w=\varinjlim_{\bfJ(\Theta)} R_{\Delta}$, it remains to note that $R_w$ is generated by the images of the $c_\Delta$.    
\end{example}

In the general case, an adjustment is needed. 
For a cell $\Delta$ of $\Theta$, let $Q_\Delta$ denote ring of polynomials in all $x_e$ such that $a_e$ is not contained in any cell and $a_e\in\conv\Delta$. 
We denote $\widehat R_\Delta=R_\Delta\otimes_\bC Q_\Delta$. 
When $\Delta\subset\Gamma$, we have embeddings $Q_\Delta\to Q_\Gamma$ and $R_\Delta\to R_\Gamma$ which together provide an embedding $\widehat R_\Delta\to\widehat R_\Gamma$. We obtain a diagram $\widehat \cR$ of shape $\bfJ(\Theta)$ taking $\Delta$ to $\widehat R_\Delta$. Furthermore, by construction, the ideal $I_w$ does not intersect any of the subrings $Q_\Delta\subset S$. This allows us to view every $Q_\Delta$ as a subring of $R_w$. Subsequently, we define morphisms $\widehat c_\Delta\colon \widehat R_\Delta\to R_w$ given by $\widehat  c_\Delta(p\otimes q)= c_\Delta(p)q$. These morphisms form a cocone $(R_w,\{\widehat c_\Delta\})$ from $\widehat \cR$.

\begin{theorem}\label{thm:colimit-hat}
The cocone $(R_{w},\{\widehat c_\Delta\})$ is the colimit of $\cR$, in particular, $R_w=\varinjlim_{\bfJ(\Theta)}\widehat R_{\Delta}$.
\end{theorem}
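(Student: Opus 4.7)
The plan is to verify the universal property of the colimit directly. Let $(R', \{c'_\Delta\}_{\Delta \in \bfJ(\Theta)})$ be another cocone from $\widehat \cR$; we construct a unique ring homomorphism $u \colon R_w \to R'$ satisfying $c'_\Delta = u \circ \widehat c_\Delta$ for all cells $\Delta$. The key observation is that every point $a_e$ is contained in $\conv(\Delta)$ for a unique minimal cell $\Delta_e \in \Theta$: either $a_e \in \Delta_e$, in which case $x_e \in \bC[\Delta_e]$ represents a class in $R_{\Delta_e} \otimes 1 \subset \widehat R_{\Delta_e}$; or $a_e$ lies in no cell, in which case $x_e \in Q_{\Delta_e}$ gives a class $1 \otimes x_e \in \widehat R_{\Delta_e}$. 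Define a ring homomorphism $\phi \colon S \to R'$ on generators by sending each $x_e$ to the image of the corresponding canonical element under $c'_{\Delta_e}$. Since every transition map in $\widehat \cR$ sends these canonical elements to their analogues in larger cells, the cocone axiom forces $\phi(x_e)$ to equal $c'_\Gamma$ of the image of $x_e$ in $\widehat R_\Gamma$ for every cell $\Gamma \supset \Delta_e$ through which $x_e$ factors.

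Next, we check that $\phi$ descends to $R_w$. Since $I_w = \sum_\Delta \wt I_\Delta$ and $\wt I_\Delta = I_\Delta \cdot S$, it suffices to verify that $\phi(I_\Delta) = 0$ for each cell $\Delta$. For $p \in I_\Delta \subset \bC[\Delta]$, every variable appearing in $p$ has the form $x_e$ with $a_e \in \Delta$, so by the compatibility noted above, $\phi(p) = c'_\Delta(\bar p \otimes 1)$, where $\bar p$ is the class of $p$ in $R_\Delta$. Since $\bar p = 0$, we conclude $\phi(I_w) = 0$, which yields an induced map $u \colon R_w \to R'$. To verify that $u \circ \widehat c_\Delta = c'_\Delta$, it is enough to evaluate on elementary tensors $p \otimes q \in R_\Delta \otimes_\bC Q_\Delta$: lifting $p$ to $\tilde p \in \bC[\Delta]$, we compute
\[
u(\widehat c_\Delta(p \otimes q)) = \phi(\tilde p\, q) = \phi(\tilde p)\, \phi(q) = c'_\Delta(p \otimes 1) \cdot c'_\Delta(1 \otimes q) = c'_\Delta(p \otimes q).
\]
Uniqueness is then immediate: $R_w$ is generated as a $\bC$-algebra by the images of the $x_e$, and each value $u(x_e)$ is forced by the constraint $u \circ \widehat c_{\Delta_e} = c'_{\Delta_e}$.

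The main obstacle to anticipate is the bookkeeping around the dichotomy between those $a_e$ lying in a cell and those lying only in the interior of the convex hull of some cell. These two classes of variables are handled through different factors, $R_\Delta$ versus $Q_\Delta$, of $\widehat R_\Delta = R_\Delta \otimes_\bC Q_\Delta$; one must carefully argue that the cocone compatibility propagates $\phi(x_e)$ unambiguously through all cells $\Gamma$ whose convex hull contains $a_e$. This split is precisely what justifies the external tensor product in the definition of $\widehat R_\Delta$, and is the reason this statement properly extends the simpler Theorem~\ref{thm:colimit}.
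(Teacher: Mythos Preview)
Your proof is correct and follows essentially the same approach as the paper: both arguments define a homomorphism $S\to R'$ by sending each $x_e$ to the image of its canonical element in $\widehat R_{\Delta_e}$ (where $\Delta_e$ is the unique minimal cell with $a_e\in\conv\Delta_e$), check that this kills every $I_\Delta$ and hence descends to $R_w$, and then verify the cocone identity and uniqueness on generators. The only differences are notational and in the level of detail spelled out for the compatibility step.
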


\begin{proof}
Consider a cocone $(R',\{c'_\Delta\})$ from $\widehat \cR$. For $e\in E$ and $\Delta$ containing $a_e$ let $y_{e,\Delta}\in R_\Delta$ denote the image of $x_e\in\bC[\Delta]$ under the quotient map. If $a_e$ is contained in at least one cell, there is a unique minimal cell $\Delta_e$ containing $a_e$. Since the morphisms $c'_\Delta$ form a cocone, we have $c'_\Delta(y_{e,\Delta}\otimes 1)=c'_{\Delta_e}(y_{e,\Delta_e}\otimes 1)$ for any $\Delta$ containing $a_e$. 

If $a_e$ is not contained in any cell, there is a unique minimal cell $\Delta_e$ whose convex hull contains $a_e$. If $a_e\in\conv\Delta$, we have $c'_\Delta(1\otimes x_e)=c'_{\Delta_e}(1\otimes x_e)$.

Hence, 
\[\psi:x_e\mapsto 
\begin{cases}
 c'_\Delta(y_{e,\Delta}\otimes 1)&\text{ if }a_e\in\Delta,\\
 c'_\Delta(1\otimes x_e)&\text{ if $a_e$ is not contained in any cell and }a_e\in \conv\Delta.
\end{cases} 
\]
gives a well-defined morphism from $S$ to $R'$. By the definition of $y_{e,\Delta}$, the kernel of $\psi$ contains every $I_\Delta$. Hence, $\psi$ is the composition of the quotient map $S\to R_w$ and a morphism $\varphi:R_w\to R'$. 

Now, if $e$ is such that either $y_{e,\Delta}\otimes 1$ or $1\otimes x_e$ is an element of $\widehat R_\Delta$, then $\widehat c_\Delta$ takes this element to the image of $x_e$ under the quotient map $S\to R_w$. Hence, $\varphi$ satisfies $c'_\Delta=\varphi\circ\widehat c_\Delta$ for every $\Delta$. Moreover, $\varphi$ is the unique morphism with this property. Indeed, $c'_\Delta=\varphi\circ\widehat c_\Delta$ determines $\varphi$ uniquely on elements of the image $\widehat c_\Delta(\widehat R_\Delta)$ and such images generate $R_w$. We conclude that $\varphi$ is the unique mediating morphism. 
\end{proof}

\begin{example}
    We return to the setting of Example \ref{doublepointex} and consider the ideal $I= \langle x_1x_2 (x_1-x_2) \rangle$ with a weight $w$ such that $w_1 < w_2$. The point configuration $\mathcal{A}(I)$ contains two points $a_1$ and $a_2$, whereas the  subdivision $\Theta$ only contains one cell: $(a_1)$. Then, we have $R_{(a_1)} = \bC[x_1]/I_{(a_1)} = \bC[x_1]/\langle 0 \rangle$, and since the diagram has one node, its limit is $\widehat R_{(a_1)} = R_{(a_1)}\otimes_\bC Q_{(a_1)} = \bC [x_1] \otimes_\bC \bC [x_2] \cong \bC[x_1,x_2] = R_w.$
\end{example}

Next we define a cocone $(S/\init_w I,\{\xi_\Delta\})$ from $\widehat\cR$. To obtain a map $R_\Delta\to S/\init_w I$ for a given $\Delta$, note that $I_\Delta\subset (\init_w I)\cap\bC[\Delta]$ by Proposition~\ref{initialcontains}, and consider the composition 
\[R_\Delta\twoheadrightarrow \bC[\Delta]/(\bC[\Delta]\cap\init_w I)\hookrightarrow S/\init_w I.\] 
Also, the quotient map $S\to S/\init_w I$ restricts to a map $Q_\Delta\to S/\init_w I$. We define $\xi_\Delta$ as the tensor product of the maps from $R_\Delta$ and $Q_\Delta$ (the second factor is not needed when every $a_e$ lies in some cell of $\Theta$). Theorem~\ref {thm:colimit-hat} now implies the following.
\begin{cor}\label{cor:med-morph-colim}
We have a unique morphism $\alpha\colon R_w\to S/\init_w I$ such that $\alpha\circ\widehat c_\Delta=\xi_\Delta$ for all cells $\Delta$. Furthermore, $\alpha$ coincides with the surjection provided by Theorem~\ref{sumcontained}.
\end{cor}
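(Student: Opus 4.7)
The plan is to recognize this as a direct application of the universal property of the colimit established in Theorem~\ref{thm:colimit-hat}. First I would check that the pair $(S/\init_w I, \{\xi_\Delta\})$ genuinely is a cocone from $\widehat\cR$: for every inclusion $\Delta\subset\Gamma$ of cells of $\Theta$, the composition of the embedding $\widehat R_\Delta\to\widehat R_\Gamma$ with $\xi_\Gamma$ must equal $\xi_\Delta$. This reduces to tracing the definitions on each tensor factor. The map $R_\Delta\to R_\Gamma$ is induced by $\bC[\Delta]\hookrightarrow\bC[\Gamma]$ together with the inclusion $I_\Delta=I_\Gamma\cap\bC[\Delta]$ from Proposition~\ref{intersection}, while $Q_\Delta\to Q_\Gamma$ is a plain inclusion of polynomial subrings; in both cases the map to $S/\init_w I$ is just the restriction of the fixed quotient $S\to S/\init_w I$, so commutativity is immediate.

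Once this cocone is in place, Theorem~\ref{thm:colimit-hat} supplies a unique mediating morphism $\alpha\colon R_w\to S/\init_w I$ with $\alpha\circ\widehat c_\Delta=\xi_\Delta$, settling the first half of the statement.

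For the second half, let $\pi\colon R_w\to S/\init_w I$ denote the surjection induced by the inclusion $I_w\subseteq\init_w I$ of Theorem~\ref{sumcontained}. By uniqueness of the mediating morphism, it suffices to verify $\pi\circ\widehat c_\Delta=\xi_\Delta$ for every cell $\Delta$. On a pure tensor $p\otimes q\in R_\Delta\otimes_\bC Q_\Delta$, lift $p$ to some $\tilde p\in\bC[\Delta]$; then by construction $\widehat c_\Delta(p\otimes q)$ is the class of $\tilde p\,q\in S$ in $R_w=S/I_w$, and applying $\pi$ produces the class of $\tilde p\,q$ in $S/\init_w I$. On the other side, $\xi_\Delta(p\otimes q)$ is, by definition, the product in $S/\init_w I$ of the images of $\tilde p$ and $q$ under the restricted quotient maps, which is again the class of $\tilde p\,q$. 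The two agree, so $\pi$ satisfies the defining property of $\alpha$, forcing $\alpha=\pi$.

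The main obstacle is purely bookkeeping: carefully unwinding how $\xi_\Delta$ is assembled from maps on the two tensor factors and confirming that the compatibility with inclusions of cells holds on nose. There are no substantive difficulties once the definitions of $\widehat c_\Delta$, $\xi_\Delta$, and the embedding $\widehat R_\Delta\to\widehat R_\Gamma$ are unpacked in parallel.
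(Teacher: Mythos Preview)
Your proposal is correct and follows essentially the same approach as the paper: invoke the universal property from Theorem~\ref{thm:colimit-hat} for existence and uniqueness, then verify that the surjection from Theorem~\ref{sumcontained} satisfies the cocone identities so that uniqueness forces the two morphisms to agree. The only cosmetic difference is that the paper checks the identity $\alpha'\circ\widehat c_\Delta=\xi_\Delta$ on the ring generators $y_{e,\Delta}\otimes 1$ and $1\otimes x_e$, whereas you check it on arbitrary pure tensors $p\otimes q$; both are valid and amount to the same computation.
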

\begin{proof}
The first claim is immediate from Theorem~\ref {thm:colimit-hat}. For the second claim one needs to check that the surjection $\alpha'$ provided by Theorem~\ref{sumcontained} satisfies $\alpha'\circ\widehat c_\Delta=\xi_\Delta$ for every $\Delta$. It suffices to verify that these two maps coincide on elements of $\widehat R_\Delta$ of the forms $y_{e,\Delta}\otimes 1$ and $1\otimes x_e$ (see proof of Theorem~\ref{thm:colimit-hat}). Indeed, both maps take such an element to the image of $x_e$ in $R_w$.
\end{proof}


\subsection{The limit}

We return to the setting and notations of Subsection~\ref{sec:upper-bound}. Remarkably, in this case the categorical interpretation is more easily stated and does not require us to adjust for points not lying in any cell.

For a cell $\Delta$ of $\Theta^*$ denote the quotient $\bC[\Delta]/I^\Delta$ by $R^\Delta$. For an inclusion of cells $\Delta\subset\Gamma$, Proposition~\ref{prop:projection} provides a surjection $\pi_\Delta^\Gamma:R^\Gamma\to R^\Delta$. The rings $R^\Delta$ and the morphisms $\pi_\Delta^\Gamma$ form a diagram $\cR^*$ of shape $\bfJ(\Theta^*)^\mathrm{op}$, the opposite category of $\bfJ(\Theta^*)$. We have $S/\wt I^\Delta = R^\Delta$ and $I^w\subset \wt I^\Delta$ which provides surjections $ c^\Delta\colon R^w\to R^\Delta$. We obtain a cone $(R^w,\{c^\Delta\})$ to $\cR^*$.
\begin{theorem}\label{thm:limit}
The cone $(R^w,\{c^\Delta\})$ is the limit of $\cR^*$, in particular, $R^w=\varprojlim_{\bfJ(\Theta^*)^\mathrm{op}} R^{\Delta}$.
\end{theorem}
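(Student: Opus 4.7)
The plan is to identify $R^w$ with the standard concrete model of a limit in commutative rings, namely the subring
\[L = \{(r^\Delta)_\Delta \in \prod_\Delta R^\Delta  \suchthat  \pi_\Delta^\Gamma(r^\Gamma) = r^\Delta \text{ for all } \Delta \subset \Gamma\}\]
of compatible tuples. Once we verify that the cone $(R^w, \{c^\Delta\})$ is isomorphic to $(L, \{\text{projections}\})$ as a cone, the universal property follows automatically. First I would check that $(R^w, \{c^\Delta\})$ is indeed a cone, which is immediate from the inclusion $\wt I^\Gamma \subset \wt I^\Delta$ for $\Delta \subset \Gamma$ (observed just before the theorem statement) giving $\pi_\Delta^\Gamma \circ c^\Gamma = c^\Delta$. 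This yields a canonical ring homomorphism $\Psi\colon R^w \to L$, $f + I^w \mapsto (f + \wt I^\Delta)_\Delta$, which is injective since $I^w = \bigcap_\Delta \wt I^\Delta$ by definition.

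The substantive step is surjectivity of $\Psi$. The key input is a Chinese-remainder-type identity: for cells $\Gamma_1, \Gamma_2$ of $\Theta^*$ with intersection $\Delta = \Gamma_1 \cap \Gamma_2$, we have $\wt I^{\Gamma_1} + \wt I^{\Gamma_2} = \wt I^\Delta$. This follows from Proposition~\ref{prop:projection}: since $\rho_\Delta^{\Gamma_i}(I^{\Gamma_i}) = I^\Delta$, both $I^{\Gamma_i}$ reduce to $I^\Delta$ modulo $\al x_e\ar_{e \notin \Delta}$, so the sum equals $I^\Delta + \al x_e\ar_{e \notin \Delta} = \wt I^\Delta$. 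Equivalently, one has a pullback square $S/(\wt I^{\Gamma_1} \cap \wt I^{\Gamma_2}) \cong R^{\Gamma_1} \times_{R^\Delta} R^{\Gamma_2}$, which is the two-cell case of the desired limit description.

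Given this identity, surjectivity of $\Psi$ follows by induction on the number of maximal cells of $\Theta^*$. A compatible tuple is determined by its restriction to maximal cells together with consistency on pairwise intersections, so at each inductive step, given a partial lift $f' \in S$ matching the tuple on $\Gamma_1, \dots, \Gamma_{k-1}$, compatibility forces the correction $r^{\Gamma_k} - \rho_{\Gamma_k}(f')$ to vanish upon restriction to each $\Gamma_i \cap \Gamma_k$, $i < k$, and iterated application of the pair identity produces an element of $\bigcap_{i<k} \wt I^{\Gamma_i}$ that can be added to $f'$ to extend the match. The main obstacle is precisely this inductive lifting step, which requires establishing the distributive-type equality $\bigl(\bigcap_{i<k} \wt I^{\Gamma_i}\bigr) + \wt I^{\Gamma_k} = \bigcap_{i<k} \wt I^{\Gamma_i \cap \Gamma_k}$; here one must carefully exploit the polyhedral subdivision structure of $\Theta^*$ (intersections of cells are cells, matched on common faces) to ensure that iterated pair identities do yield the required element, since the distributive law for ideals fails in general.
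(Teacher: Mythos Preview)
Your overall framework matches the paper's: realize the limit concretely as the subring $L\subset\prod_\Delta R^\Delta$ of compatible tuples, note that $\Psi\colon R^w\to L$ is injective because $I^w=\bigcap_\Delta\wt I^\Delta$, and reduce everything to surjectivity of $\Psi$. The pairwise identity $\wt I^{\Gamma_1}+\wt I^{\Gamma_2}=\wt I^{\Gamma_1\cap\Gamma_2}$ that you derive from Proposition~\ref{prop:projection} is correct.

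However, the obstacle you flag is a genuine gap, not a routine verification. The distributive-type equality
\[
\Bigl(\bigcap_{i<k}\wt I^{\Gamma_i}\Bigr)+\wt I^{\Gamma_k}=\bigcap_{i<k}\wt I^{\Gamma_i\cap\Gamma_k}
\]
cannot be obtained by iterating the pairwise identity: the right-hand side equals $\bigcap_{i<k}(\wt I^{\Gamma_i}+\wt I^{\Gamma_k})$, and you are then asking precisely for the modular law, which fails for ideals in general. Nothing in the incidence combinatorics of $\Theta^*$ alone forces it, and you have not indicated an argument that does.

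The paper avoids this issue entirely with a different idea for surjectivity. The key observation is that $I^\Gamma\subset I^w$ as subspaces of $S$ for every cell $\Gamma$: choosing $w'\in w+\cL(I)$ vanishing on $\Gamma$, one has $p=\init_{w'}p$ for $p\in I^\Gamma$, whence $I^\Gamma\subset\init_{w'}I=\init_wI\subset I^w$ by Theorem~\ref{thm:intersectioncontains}. This allows one to define, for each $\Gamma$, a map $s_\Gamma\colon R^\Gamma\to c(S)\subset L$ by lifting $r_\Gamma$ to any $p\in\bC[\Gamma]$ and sending it to $c(p)$; well-definedness is exactly $I^\Gamma\subset\ker c=I^w$. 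One then checks that for $r\in L$ the difference $r-s_\Gamma(r_\Gamma)$ vanishes in every coordinate $\Delta\subset\Gamma$ and does not disturb coordinates that were already zero. Running through the maximal cells $\Gamma_1,\dots,\Gamma_n$ in turn and successively subtracting $s_{\Gamma_i}$ of the current $\Gamma_i$-component reduces any $r\in L$ to zero, exhibiting it as an element of $c(S)$. So rather than gluing local lifts via a distributive law, the paper produces a section from each $R^\Gamma$ directly into the image of $S$, at the cost of invoking the upper-bound Theorem~\ref{thm:intersectioncontains} as an external input.
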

\begin{proof}
Limits in the category of commutative rings (or $\bC$-algebras) have an explicit realization in terms of direct products, see Exercise 10 in \cite[Section~7.6]{DummitFoote}. 
Let $R_0\subset\prod_\Delta R^\Delta$ be the subring of all elements $(r_\Delta)_{\Delta}$ such that for any $\Delta\subset\Gamma$ one has $\pi_\Delta^\Gamma(r_\Gamma)=r_\Delta$. The projections $c_0^\Delta: R_0\to R^\Delta$ form a cone $(R_0,\{c_0^\Delta\})$ to $\cR^*$ that is the limit of the latter.

Since we have a surjection $S\to R^\Delta$ with kernel $\wt I^\Delta$ for every $\Delta$, we have a map $ c \colon S\to\prod_\Delta R^\Delta$ with kernel $I^w$ and image isomorphic to $R^w$. We obtain an embedding $R^w\to\prod_\Delta R^\Delta$, this embedding is the direct product $\prod_\Delta c^\Delta$. To prove the theorem, we are to show that the image $c(S)\simeq R^w$ coincides with $R_0$. From the definition of $c$ it is clear that the image $c(S)$ is contained in $R_0$ and we prove the reverse inclusion.

Choose a cell $\Gamma$. We claim that $I^\Gamma$ is a subspace of $I^w$. Indeed, similarly to the proof of Proposition~\ref{prop:initialcontained}, we have $w'\in w+\cL(I)$ with $w'_e=0$ for $e\in\Gamma$. In particular, $\init_{w'} p=p$ for any $p\in I^\Gamma$. Hence, $I^\Gamma\subset\init_{w'}I=\init_w I\subset I^w$ by Theorem~\ref{thm:intersectioncontains}.

This allows us to define a homomorphism $s_\Gamma:R^\Gamma\to\prod_\Delta R^\Delta$ as follows. For $r_\Gamma\in R^\Gamma$ choose any $p\in\bC[\Gamma]$ that is taken to $r_\Gamma$ by the quotient map, and set $s_\Gamma(r_\Gamma)= c(p)$. This is well-defined because the kernel of $c$ contains $I_\Gamma$.

For $r\in\prod_\Delta R^\Delta$ and a cell $\Delta$ let $r_{\Delta}$ denote the $\Delta$th component of $r$. For $r\in R_0$ consider $r'=r-s_\Gamma(r_\Gamma)$. Note that $s_\Gamma(r_\Gamma)_\Delta$ is equal to 0 if $\Gamma\cap\Delta$ is empty and to $s_{\Gamma\cap\Delta}(r_{\Gamma\cap\Delta})_\Delta$ otherwise. Therefore, $r'_\Delta=0$ for any $\Delta\subset\Gamma$ and, furthermore, $r'_\Delta=0$ whenever $r_\Delta=0$. 

Now, choose $r^0\in R_0$. Let $\Gamma_1,\dots,\Gamma_n$ be the maximal cells of $\Theta^*$. For $i=1,\dots,n$ set $r^i=r^{i-1}-s_{\Gamma_i}(r^{i-1}_{\Gamma_i})$. We see that $r^i_\Delta=0$ for all $\Delta$ that are contained in one of $\Gamma_1,\dots,\Gamma_i$. Hence, $r^n=0$, and we have expressed an arbitrary $r^0\in R_0$ as a sum of elements of $c(S)$, as desired.
\end{proof}

The surjections $\xi^\Delta\colon S/\init_w I\to R^\Delta$ given by Proposition~\ref{prop:initialcontained} form a cone $(S/\init_w I,\{\xi^\Delta\})$ to $\cR^*$. Proposition~\ref{thm:limit} together with the universal property of limits provides

\begin{cor}
We have a unique morphism $\alpha\colon S/\init_w I\to R^w$ such that $ c^\Delta\circ\alpha=\xi^\Delta$ for all cells $\Delta$. Furthermore, $\alpha$ coincides with the surjection provided by Theorem~\ref{thm:intersectioncontains}. 
\end{cor}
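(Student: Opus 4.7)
The plan is to follow the same two-step template used in Corollary~\ref{cor:med-morph-colim}, only now dualized: first exhibit $(S/\init_w I,\{\xi^\Delta\})$ as a cone to $\cR^*$, then invoke the universal property of Theorem~\ref{thm:limit} to extract $\alpha$, and finally identify $\alpha$ with the canonical surjection from Theorem~\ref{thm:intersectioncontains}.

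For the cone property, I need to verify $\pi_\Delta^\Gamma\circ\xi^\Gamma=\xi^\Delta$ whenever $\Delta\subset\Gamma$ are cells of $\Theta^*$. Unwinding definitions, $\xi^\Delta$ is simply the surjection $S/\init_w I\twoheadrightarrow S/\wt I^\Delta=R^\Delta$ coming from the inclusion $\init_w I\subset\wt I^\Delta$ of Proposition~\ref{prop:initialcontained}, while $\pi_\Delta^\Gamma$ is induced by $\rho_\Delta^\Gamma\colon\bC[\Gamma]\to\bC[\Delta]$. Since $\rho_\Delta^\Gamma\circ\rho_\Gamma=\rho_\Delta$ at the level of $S$, both compositions send the class of $x_e$ in $S/\init_w I$ to the class of $x_e$ in $R^\Delta$ (which is $0$ when $e\notin\Delta$), so they agree on generators and hence as ring maps. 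I do not expect any real obstacle here; it is a direct check that the three compatible quotient maps $S\to\bC[\Delta]\to R^\Delta$ factor through $S/\init_w I$ and through $R^\Gamma$ in the expected way.

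Once the cone is in place, Theorem~\ref{thm:limit} furnishes a unique morphism $\alpha\colon S/\init_w I\to R^w$ satisfying $c^\Delta\circ\alpha=\xi^\Delta$ for every cell $\Delta$. For the second assertion, let $\beta\colon S/\init_w I\to R^w=S/I^w$ be the surjection coming from the inclusion $\init_w I\subset I^w$ in Theorem~\ref{thm:intersectioncontains}. By the uniqueness clause it suffices to show $c^\Delta\circ\beta=\xi^\Delta$ for all $\Delta$. Since $S/\init_w I$ is generated by the images of the variables $x_e$, one only needs to check the identity on these generators; but both $c^\Delta\circ\beta$ and $\xi^\Delta$ send the class of $x_e$ to its class in $R^\Delta$, so they coincide, and therefore $\beta=\alpha$.

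The main (and only) point requiring care is making the bookkeeping of quotients transparent, i.e.\ noting that each of $\xi^\Delta$, $c^\Delta$, and $\beta$ arises from a chain of surjections $S\twoheadrightarrow S/\init_w I\twoheadrightarrow S/I^w\twoheadrightarrow S/\wt I^\Delta$, all compatible with one another by construction. Once this is laid out, both the cone axiom and the identification $\alpha=\beta$ reduce to agreement on the $x_e$.
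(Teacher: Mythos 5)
Your proof is correct and matches the paper's (implicit) approach: the paper states the cone $(S/\init_w I,\{\xi^\Delta\})$ to $\cR^*$ without detail and treats the corollary as immediate from the universal property of Theorem~\ref{thm:limit}, plus the same generator-level check that the surjection from Theorem~\ref{thm:intersectioncontains} satisfies the cone equations — just as in the paper's proof of Corollary~\ref{cor:med-morph-colim}. You have simply filled in the omitted verification that $\pi_\Delta^\Gamma\circ\xi^\Gamma=\xi^\Delta$ and the agreement on the classes of the $x_e$, both of which are accurate.
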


\begin{example}
Consider the first case in Example~\ref{ex:upperbound}: $w_1+w_2>w_3+w_4$. All $I^\Delta=0$ and the diagram $\cR^*$ consists simply of the rings $\bC[\Delta]$ and the natural surjections between them. However, the limit of this diagram is the ring $R^w=S/\al x_3x_4\ar$. One way of seeing this is by identifying the limit of $\cR^*$ with the pullback $\bC[\Delta_3]\times_{\bC[x_1,x_2]}\bC[\Delta_4]$, i.e.\ by ignoring all but three nodes of the diagram, cf.\ Subsection~\ref{sec:lim-graph}.
\end{example}

\subsection{Limits, colimits and the adjacency graph}
\label{sec:lim-graph}
The whole category $\bfJ(\Theta)$ of the subdivision $\Theta$ can be quite large and working with the categorical limits and colimits in this section may appear to be challenging.  In this subsection, we describe subcategories of $\bfJ(\Theta)$ which are substantially smaller than $\bfJ(\Theta)$ but yield the same (co)limit. 

The \textit{adjacency graph} to the subdivision $\Theta$ is the graph $\cG(\Theta)$ that has a vertex for each maximal cell of $\Theta$, and two vertices are connected by an edge if and only if they share a codimension-1 face. Thus, the edges of $\cG(\Theta)$ correspond to \textit{interior} codimension-1 cells of $\Theta$, i.e.\ those not contained in a proper face of $\Theta$. We denote by $\bfK(\Theta)$ the full subcategory of $\bfJ(\Theta)$ formed by the maximal cells and the interior codimension-1 cells. The objects of $\bfK(\Theta)$ correspond to vertices and edges of $\cG(\Theta)$, the morphisms correspond to adjacencies between vertices and edges. 

\begin{proposition}
\label{prop:limit-tight-span}
For a category $\bfC$, let $\cD\colon \bfJ(\Theta) \to \bfC$ and $\cD^* \colon \bfJ(\Theta)^{\op} \to \bfC$  be diagrams, and let $\bfJ$ be a subcategory of $\bfJ(\Theta)$ containing $\bfK (\Theta)$. If $\bfC$ admits finite colimits, then the three colimits $\varinjlim_{\bfK(\Theta)} \cD_{\Delta}$, $\varinjlim_{\bfJ} \cD_{\Delta}$ and $\varinjlim_{\bfJ(\Theta)} \cD_{\Delta}$ are mutually isomorphic. Similarly, if $\bfC$ admits finite limits, then $\varprojlim_{\bfJ(\Theta)^{\op}} \cD_{\Delta}^*$,   $\varprojlim_{\bfJ^{\op}} \cD_{\Delta}^*$ and  $\varprojlim_{\bfK(\Theta)^{\op}} \cD_{\Delta}^*$ are mutually isomorphic. 
\end{proposition}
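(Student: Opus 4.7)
The plan is to prove the result by showing that the inclusion $F\colon\bfK(\Theta)\hookrightarrow\bfJ(\Theta)$ is a \emph{final} functor, and similarly for the two inclusions $\bfK(\Theta)\hookrightarrow\bfJ$ and $\bfJ\hookrightarrow\bfJ(\Theta)$ whenever $\bfK(\Theta)\subseteq\bfJ\subseteq\bfJ(\Theta)$. By the standard theorem that precomposition with a final functor preserves colimits, this yields the claimed isomorphisms for $\cD$. The limit case follows by noting that $F$ being final is equivalent to $F^{\op}$ being \emph{initial}, and initial functors preserve limits in the dual sense.

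By the standard criterion, the inclusion $F$ is final exactly when, for every cell $\Delta\in\bfJ(\Theta)$, the slice $\Delta/F$ — whose objects are cells $\Gamma\in\bfK(\Theta)$ satisfying $\Delta\subseteq\Gamma$, with morphisms given by inclusions — is non-empty and connected. Non-emptiness is immediate, since every cell of $\Theta$ is a face of some maximal cell, and maximal cells lie in $\bfK(\Theta)$. For connectedness, the key observation is that whenever two maximal cells $\Gamma_1,\Gamma_2\supseteq\Delta$ meet in a common codimension-$1$ face $F$, we automatically have $\Delta\subseteq\Gamma_1\cap\Gamma_2=F$; moreover, such an $F$ is shared by two maximal cells and hence is an interior codimension-$1$ cell, so $F\in\bfK(\Theta)$. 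The zigzag $\Gamma_1\supseteq F\subseteq\Gamma_2$ then lives inside $\Delta/F$. Thus connectedness of the slice reduces to showing that the adjacency graph of $\Theta$, restricted to those maximal cells that contain $\Delta$, is connected.

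The main obstacle is this last geometric claim, which is a standard fact in polyhedral combinatorics that one has to pin down carefully: the link of $\Delta$ inside the polyhedral subdivision $\Theta$ of $\conv(\cA(I))$ is a pure polyhedral complex whose underlying space is homeomorphic to a sphere when $\conv(\Delta)$ lies in the relative interior of $\conv(\cA(I))$, and to a ball otherwise. In either case this space is path-connected, and path-connectedness of a pure polyhedral complex implies connectedness of its dual graph via a transversality argument: a generic path between interior points of two top-dimensional cells meets only codimension-$1$ strata, and each such meeting encodes an adjacency. Once this is in place, the identical argument handles any intermediate $\bfJ$, since the codimension-$1$ witnesses needed to form the zigzags already belong to $\bfK(\Theta)\subseteq\bfJ$; this simultaneously verifies that $\bfK(\Theta)\hookrightarrow\bfJ$ and $\bfJ\hookrightarrow\bfJ(\Theta)$ are final, giving the two isomorphisms in the chain.
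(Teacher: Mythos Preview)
Your proof is correct and follows the same approach as the paper: both verify finality of $\bfK(\Theta)\hookrightarrow\bfJ(\Theta)$ via the nonempty-and-connected slice criterion, reduce to connectedness of the adjacency subgraph on maximal cells containing a given $\Delta$, and then dualize for the limit statement; the paper simply cites an external reference for this geometric connectedness fact, whereas you sketch it via links and transversality. One minor caution on your sketch: the transversality step genuinely uses that the link is a manifold (sphere or ball), not merely that it is path-connected, since path-connected pure complexes can have disconnected dual graphs (e.g., two top-dimensional simplices meeting at a single vertex).
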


\begin{proof}
To show that $\varinjlim_{\bfK(\Theta)} \cD_{\Delta}$ is isomorphic to $\varinjlim_{\bfJ(\Theta)}\cD_{\Delta}$, it suffices to show that $\bfK(\Theta)$ is a final subcategory of $\bfJ(\Theta)$, see~\cite[Section IX.3]{maclane}. This means that for every $\Delta$ in $\bfJ(\Theta)$, the comma category $(\Delta\downarrow\iota)$ is nonempty and connected, where $\iota\colon \bfK(\Theta)\to\bfJ(\Theta)$ is the inclusion functor. This comma category is, up to isomorphism, the full subcategory of $\bfK(\Theta)$ formed by those $\Gamma$ that contain $\Delta$. It is obviously nonempty and it is connected because the subgraph of $\cG(\Theta)$ formed by vertices and edges corresponding to cells that contain $\Delta$ is connected, see e.g.\ \cite[Proposition~6.1]{corey_olarte_2022}. A similar argument shows that $\varinjlim_{\bfK(\Theta)} \cD_{\Delta}$ and $\varinjlim_{\bfJ}\cD_{\Delta}$ are isomorphic.

Dually, $\varprojlim_{\bfJ(\Theta)^{\op}} \cD_{\Delta}^*$ is isomorphic to $\varprojlim_{\bfK(\Theta)^{\op}} \cD_{\Delta}^*$ because the subcategory $\bfK(\Theta)^{\op}$ is initial. This means that the comma categories $(\iota^*\downarrow\Delta)$ (where $\iota^*$ is the inclusion functor) are nonempty and connected. However, $(\iota^*\downarrow\Delta)$ is the dual of the nonempty connected category $(\Delta\downarrow\iota)$. Again, for $\varprojlim_{\bfK(\Theta)^{\op}} \cD_{\Delta}$ and $\varprojlim_{\bfJ^{\op}}\cD_{\Delta}$ the argument is similar.
\end{proof}

Proposition~\ref{prop:limit-tight-span} applies not only to the subdivision $\Theta$ but also to any other regular subdivision, including $\Theta^*$ (in fact, it even holds for non-regular polyhedral subdivisions). In particular, combining Proposition~\ref{prop:limit-tight-span} with Theorems~\ref{thm:colimit-hat} and~\ref{thm:limit}, we see that the rings $R_w$ and $R^w$ can be obtained via (co)limits of subdiagrams of $\cR$ and $\cR^*$.

\begin{remark}
One particularly useful intermediate category $\bfJ$ is formed by the interior cells of $\Theta$. This category may be identified with the face poset of the tight span of $\Theta$: a polyhedral complex that is, roughly speaking, dual to $\Theta$. It has a cell of dimension $k$ for each interior cell of dimension $\dim \Theta - k$, and the face inclusions are opposite those in $\Theta$. See \cite{Herrmann} for a more detailed construction. Note that the adjacency graph of $\Theta$ is the 1-skeleton of the tight span, so $\bfJ$ does indeed contain $\bfK(\Theta)$.   In the Grassmannian setting, these ideas feature prominently in \cite{CoreyLuber}.
\end{remark} 

\section{Exactness of bounds and subfans}

\subsection{The lower bound}

A natural question raised by the above is for which $w$ the bounds on the initial ideal provided by Theorems~\ref{sumcontained} and~\ref{thm:intersectioncontains} are exact. We show that in both cases the set of such $w$ has a surprisingly nice structure: it is the support of a subfan in the secondary fan.

First, let $\Omega(I)$ denote the set of those $w\in\bR^E$ for which $I_w=\init_w I$. Since $I_w$ is determined by $\subd_w\cA(I)$, one sees that $\Omega(I)$ is a union of relative interiors of cones in the common refinement of $\Grob I$ and $\Sec\cA(I)$. However, one easily verifies a stronger statement using the notion of \textit{graded dimension} of a homogeneous ideal. For such an ideal $J\subset S$ we view $\grdim(J)$ as an infinite sequence of integers indexed by $\bZ_{\ge0}$, with element $\grdim(J)_i$ equal to the dimension of the degree $i$ homogeneous component of $J$. A basic property of initial ideals is that $\grdim(\init_w I)=\grdim(I)$ for any $w$. We will also consider a partial order on graded dimensions by writing $\grdim(J_1)\le\grdim(J_2)$ to indicate that $\grdim(J_1)_i\le \grdim(J_2)_i$ for all $i$.
\begin{proposition}\label{isunion}
$\Omega(I)$ is a union of relative interiors of cones in $\Sec\cA(I)$.
\end{proposition}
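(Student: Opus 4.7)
The plan is to show that whether $w$ lies in $\Omega(I)$ depends only on the subdivision $\Theta=\subd_w\cA(I)$. Since the locus $\{w\in\bR^E : \subd_w\cA(I)=\Theta\}$ is by definition the relative interior $\cone(\cA(I),\Theta)^\circ$ of a cone in $\Sec\cA(I)$, this will at once express $\Omega(I)$ as a union of such relative interiors.

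The first observation is that $I_w$ depends only on $\Theta$, not on $w$ itself. Indeed, by \eqref{eq:I_wdef}, $I_w=\sum_{\Delta\in\Theta}\wt I_\Delta$, and each summand is determined by the cell $\Delta$ alone via $I_\Delta=\rho_\Delta(I)$. Moreover $I_w$ is homogeneous with respect to the standard grading $\one$, since each $\rho_\Delta$ preserves $\one$-degree, so the graded dimension $\grdim(I_w)$ is also a function of $\Theta$ alone.

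The second step combines this with Theorem~\ref{sumcontained} and the standard invariance $\grdim(\init_w I)=\grdim(I)$. The theorem gives $I_w\subseteq\init_w I$; since both ideals are $\one$-homogeneous and each graded piece of $S$ is finite-dimensional, this inclusion is an equality if and only if $\grdim(I_w)=\grdim(I)$ in every degree. The latter is a condition depending only on $\Theta$. Hence $\Omega(I)$ is the disjoint union of those cones $\cone(\cA(I),\Theta)^\circ$ for which $\grdim(I_w)=\grdim(I)$, which proves the proposition. The argument is essentially an unpacking of definitions; the only small subtlety is the upgrade of the inclusion $I_w\subseteq\init_w I$ to an equality via graded dimensions, which is immediate once one notes that $S$ is locally finite-dimensional with respect to $\one$.
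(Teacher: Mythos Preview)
Your proof is correct and follows essentially the same approach as the paper: both arguments observe that $I_w$ depends only on $\Theta=\subd_w\cA(I)$, invoke Theorem~\ref{sumcontained} for the inclusion $I_w\subseteq\init_w I$, and then use $\grdim(\init_w I)=\grdim(I)$ to conclude that membership in $\Omega(I)$ is decided by the graded-dimension equality $\grdim(I_w)=\grdim(I)$, a condition depending only on $\Theta$. Your version is slightly more explicit in phrasing this as an ``if and only if'' criterion, but the substance is identical.
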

\begin{proof}
Choose $w\in\Omega(I)$ and denote $\subd_w\cA(I)=\Theta$. We are to show that $w'\in\Omega(I)$ whenever $w'\in\cone(\cA(I),\Theta)^\circ$. 
Indeed, in this case we have $I_{w'}=I_w=\init_w I$, which implies
\[\grdim(I_{w'})=\grdim(\init_w I)=\grdim(I)=\grdim(\init_{w'} I).\] 
However, Theorem~\ref{sumcontained} provides $I_{w'}\subset\init_{w'} I$, hence equality of graded dimensions is only possible if $I_{w'}=\init_{w'}(I)$.
\end{proof}

We will show that this proposition can be made substantially stronger. $\Omega(I)$ is not just a union of relative interiors of cones, it is, in fact, the support of a subfan. In the proof of the next lemma, the \textit{initial subspace} $\init_w U$ of a vector subspace $U\subset S$ with respect to a weight $w\in\bR^E$ is the linear span of $\init_w p$ over all $p\in U$.
\begin{lemma}\label{initialofIw}
If $w,w'\in\bR^E$ satisfy $w'\in\cone(\cA(I),\,\subd_w\cA(I))$, then $I_w\subset \init_w(I_{w'})$.
\end{lemma}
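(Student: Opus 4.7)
The plan is to show $I_\Delta\subset \init_w(I_{w'})$ for every cell $\Delta$ of $\Theta:=\subd_w\cA(I)$; since $I_w$ is generated by the subspaces $I_\Delta$ and $\init_w(I_{w'})$ is an ideal, this suffices. First I set up the necessary refinement picture. Because $w'\in\cone(\cA(I),\Theta)$, the cone $\cone(\cA(I),\Theta')$ of $\Theta':=\subd_{w'}\cA(I)$ is a face of $\cone(\cA(I),\Theta)$, so $\Theta$ refines $\Theta'$ and every cell $\Delta\in\Theta$ is contained in some cell $\Delta'\in\Theta'$. Next, by Proposition~\ref{intersection}, $I_\Delta=I_\Gamma\cap\bC[\Delta]\subset I_\Gamma$ whenever $\Delta\subset\Gamma$, which reduces the statement to the case of maximal cells $\Delta$ of $\Theta$.

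For such a maximal $\Delta$ I mimic the construction in the proof of Proposition~\ref{initialcontains}: choose $v\in\bR^E$ with $v-w\in\cL(I)$, $v|_\Delta=0$, and $v_e>0$ for $e\notin\Delta$, and write $v=w-u$ with $u\in\cL(I)$. Given $p\in I_\Delta$, lift it to $p=\rho_\Delta(r)$ for some $r\in I$. Since $\init_u I=I$, the ideal $I$ is $u$-graded, so I may decompose $r$ into $u$-homogeneous components and work with each one; equivalently, I may assume $r$ (and hence $p$) is $u$-homogeneous. The candidate witness is then
\[
q := \rho_{\Delta'}(r) \in I_{\Delta'} \subset \wt I_{\Delta'} \subset I_{w'}.
\]
Since $q$ is again $u$-homogeneous, the $w$-weight and the $v$-weight of any monomial of $q$ differ by the same constant, so $\init_w q=\init_v q$. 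But $v$ vanishes on $\Delta$ and is strictly positive on $\Delta'\setminus\Delta$, hence the minimum of $v\cdot\alpha$ over monomials $x^\alpha$ of $q$ is achieved exactly by those with $\supp\alpha\subset\Delta$. Therefore $\init_v q=\rho_\Delta(q)=\rho_\Delta(r)=p$, and thus $p=\init_w q\in\init_w(I_{w'})$, as required.

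The main obstacle is precisely this last computation: one naturally wants to use $\rho_\Delta(r)$ itself as the witness, but that element lies in $I_\Delta\subset I_w$, not in $I_{w'}$. Replacing it by $\rho_{\Delta'}(r)$ puts us inside $I_{w'}$, but then we must compute $\init_w$ of a polynomial that involves variables indexed by $\Delta'\setminus\Delta$, and the weights $w$ and $v$ are no longer interchangeable on an arbitrary polynomial (they only agree as gradings on $I$, not term-by-term). The $u$-homogeneity reduction is what resolves this tension, reducing $\init_w q$ to $\init_v q$, which is easy to compute from the sign pattern of $v$. The existence of $v$ with all the required properties, including $\subd_v\cA(I)=\Theta$, is already established in the proof of Proposition~\ref{initialcontains} via the lineality description of Proposition~\ref{prop:lineality}, and the containment $\Delta\subset\Delta'$ is immediate from the definition of refinement used in the paper.
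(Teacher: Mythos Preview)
Your proof is correct and follows essentially the same approach as the paper: reduce to maximal cells $\Delta$, find the containing cell $\Delta'$ of $\subd_{w'}\cA(I)$, shift $w$ by an element of $\cL(I)$ to a weight vanishing on $\Delta$ and positive elsewhere, and lift $p\in I_\Delta$ through $\rho_{\Delta'}$ to an element of $I_{\Delta'}\subset I_{w'}$ whose initial form is $p$. The only difference is organizational: the paper observes once that the subspace $I_{\Delta'}$ is $\cL(I)$-homogeneous, so $\init_w(I_{\Delta'})=\init_{w_1}(I_{\Delta'})$ at the level of initial subspaces, whereas you achieve the same effect by decomposing the individual lift $r$ into $u$-homogeneous components.
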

\begin{proof}
The hypothesis means that $\subd_w\cA(I)$ refines $\subd_{w'}\cA(I)$. Let $\Delta$ be a maximal cell of $\subd_w\cA(I)$, we have a unique maximal cell $\Delta'$ of $\subd_{w'}\cA(I)$ containing $\Delta$. As in Subsection~\ref {thm:colimit}, let $I_\Delta$ denote the image of $I$ under the surjection $\rho_\Delta \colon S\to\bC[\Delta]$ and define $I_{\Delta'}$ similarly. Since $I_w$ is generated by subspaces of the form $I_\Delta$, it suffices to show that $I_\Delta\subset \init_w(I_{\Delta'})$.

We may consider $w_1\in w+\cL(I)$ such that $(w_1)_e=0$ if $a_e\in\Delta$ and $(w_1)_e>0$ otherwise. Since the subspace $I_{\Delta'}\subset S$ is homogeneous with respect to any weight in $\cL(I)$, we have $\init_{w_1}(I_{\Delta'})=\init_w(I_{\Delta'})$. Now, for nonzero $p\in I_\Delta$ consider an element $p'\in I_{\Delta'}$ such that $\rho_\Delta^{\Delta'}(p')=p$, where $\rho_\Delta^{\Delta'} \colon \bC[\Delta']\to\bC[\Delta]$ takes $x_e$ with $a_e\in\Delta$ to itself and all others to 0. Evidently, $p=\init_{w_1} p'\in\init_w(I_{w'})$.
\end{proof}

We obtain our result as a simple consequence.

\begin{theorem}\label{issubfan}
The set $\Omega(I)$ is the support of a subfan of $\Sec\cA(I)$.    
\end{theorem}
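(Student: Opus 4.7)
The plan is to upgrade Proposition~\ref{isunion}, which says $\Omega(I)$ is a union of relative interiors of cones in $\Sec\cA(I)$, to the statement that $\Omega(I)$ is \emph{closed under taking faces} among such cones. Together with Proposition~\ref{isunion}, this is exactly what it means for $\Omega(I)$ to be the support of a subfan.

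Concretely, I would fix $w \in \Omega(I)$ lying in the relative interior of the cone $\sigma = \cone(\cA(I),\subd_w\cA(I))$, pick $w'$ in the relative interior of a face $\tau$ of $\sigma$, and show $w' \in \Omega(I)$. Since $w' \in \sigma$, the subdivision $\subd_w\cA(I)$ refines $\subd_{w'}\cA(I)$, and Lemma~\ref{initialofIw} applies directly: $I_w \subset \init_w(I_{w'})$. I already know $I_w = \init_w I$ because $w \in \Omega(I)$, and $I_{w'} \subset \init_{w'} I$ by Theorem~\ref{sumcontained}, so all I need is to pin down $I_{w'} = \init_{w'} I$.

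The key step is a graded-dimension squeeze. Each of $I$, $I_{w'}$, $\init_w I$, $\init_{w'} I$ and $\init_w(I_{w'})$ is homogeneous with respect to $\one$, and taking initial forms of a $\one$-homogeneous subspace of $S$ preserves the dimension of each graded component; thus $\grdim(\init_v U) = \grdim U$ for any such $U$ and any weight $v$. Chaining the inclusions gives
\begin{equation*}
    \grdim I = \grdim(\init_w I) = \grdim(I_w) \le \grdim(\init_w I_{w'}) = \grdim(I_{w'}) \le \grdim(\init_{w'} I) = \grdim I,
\end{equation*}
so equality holds throughout. In particular $\grdim(I_{w'}) = \grdim(\init_{w'} I)$, which combined with the inclusion $I_{w'} \subset \init_{w'} I$ forces $I_{w'} = \init_{w'} I$, i.e.\ $w' \in \Omega(I)$.

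The only real delicacy is verifying the dimension-preservation claim $\grdim(\init_v U) = \grdim U$ and checking that Lemma~\ref{initialofIw} genuinely yields an inclusion of $\one$-graded subspaces (which is automatic, since $I_{w'}$ is $\one$-homogeneous and $\init_w$ of a $\one$-homogeneous space is again $\one$-homogeneous). Everything else is a straightforward assembly of the previously established tools, and no new geometric input is required beyond Lemma~\ref{initialofIw} and Proposition~\ref{isunion}.
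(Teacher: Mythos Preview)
Your proposal is correct and follows essentially the same route as the paper: reduce to showing closure under passing to faces, invoke Lemma~\ref{initialofIw} to get $I_w\subset\init_w(I_{w'})$, and run the graded-dimension squeeze against Theorem~\ref{sumcontained}. The paper's argument is a slightly terser version of exactly this chain of (in)equalities.
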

\begin{proof}
We show that if $w\in\Omega(I)$ and $w'\in\cone(\cA(I),\,\subd_w\cA(I))$, then $w'\in\Omega(I)$. This suffices by Proposition~\ref{isunion}. Lemma~\ref{initialofIw} provides
\[
\grdim(I_{w'})=\grdim(\init_w(I_{w'}))\ge\grdim(I_w)=\grdim(\init_w I)=\grdim(\init_{w'} I).
\]
Theorem~\ref{sumcontained} now implies $I_{w'}=\init_{w'}I$.
\end{proof}

\begin{example}
Again, suppose that $I$ is the Pl\"ucker ideal $I_{2,n}$. In this case, $\Sec\cA(I)$ is the secondary fan of (the vertex set of) $\Delta(2,n)$. An important subfan of $\Sec\cA(I)$ is the \emph{Dressian} $\Dr(2,n)$ (see~\cite{HerrmannJensenJoswigSturmfels}): it consists of those cones for which the respective subdivision is matroidal. By the results of \cite{SS}, the support of $\Dr(2,n)$ is $\Trop I$ and Theorem~\ref{Gr2nsame} implies that $\Dr(2,n)$ is contained in the subfan supported on $\Omega(I)$. However, $\Omega(I)$ can be substantially larger than the Dressian. For $n=4$ it is not hard to check that $\Omega(I)=\bR^E$ and has dimension 6 while $\Dr(2,n)$ has dimension 5. For $n=5$, a computation in OSCAR shows that $\Omega(I)$ is a union of 72 of the 102 maximal (i.e.\ 10-dimensional) cones of $\Sec\cA(I)$ while $\Dr(2,n)$ has dimension 7, see the repository~\eqref{githublink}.
\end{example}

\begin{remark}\label{rem:subfanofTGr}
Now, consider arbitrary $1\le k<n$, let $I=I_{k,n}$. The Dressian $\Dr(k,n)$ is, again, defined as the subfan of $\Sec \cA(I)$ parametrizing matroid subdivisions. In~\cite{Tev} it is shown that $\Trop I$ is contained in the support of $\Dr(k,n)$. Moreover, every $\cone(I,J)\subset\Trop I$ is contained in some cone of $\Dr(k,n)$, i.e.\ there is a morphism of fans from the fan structure on $\Trop I$ inherited from $\Grob I$ to $\Dr(k,n)$. This implies that for any $w\in\Omega(I)\cap\Trop I$ all of $\cone(I,\init_w I)$ is contained in $\Omega(I)$. Therefore, $\Omega(I)\cap\Trop I$ is the support of a subfan of the tropical fan (equipped with the Gr\"obner fan structure). For $k=2$, this subfan is the entire tropical fan. For $k=3$ and $n=6$, it can be computed that this subfan contains 30 of the 1035 maximal tropical cones. This computation was done using data for $\Trop I_{3,6}$ in the FAIR file format \cite{FAIR}, provided to the authors by Michael Joswig. For details, see~\eqref{githublink}.
\end{remark}

\subsection{The upper bound}

Let $\Omega^*(I)\subset\bR^E$ be the set of all $w$ for which $I^w=\init_w(I)$. Let $-\Sec\cA(I)$ denote the fan obtained from $\Sec\cA(I)$ by reflection in the origin. It is clear that $\Omega^*(I)$ is a union of relative interiors of cones in the common refinement of $\Grob I$ and $-\Sec\cA(I)$. Similarly to the previous case, we make this stronger in a series of statements.

\begin{proposition}\label{dualisunion}
$\Omega^*(I)$ is a union of relative interiors of cones in $-\Sec\cA(I)$.
\end{proposition}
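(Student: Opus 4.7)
The plan is to closely mirror the proof of Proposition~\ref{isunion}, exchanging the roles of the lower and upper bounds and reflecting the relevant cones in the origin. The key observations we need are: (i) the ideal $I^w$ depends on $w$ only through the subdivision $\Theta^* = \subd_{-w}\cA(I)$, and (ii) graded dimension is preserved under taking initial ideals, so equality of graded dimensions together with the containment from Theorem~\ref{thm:intersectioncontains} forces equality of the ideals themselves.

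Concretely, I would pick $w \in \Omega^*(I)$, set $\Theta^* = \subd_{-w}\cA(I)$, and aim to show that the entire relative interior of the cone of $\Sec\cA(I)$ containing $-w$ — equivalently, the relative interior of the cone of $-\Sec\cA(I)$ containing $w$ — lies in $\Omega^*(I)$. Suppose $w' \in \bR^E$ satisfies $\subd_{-w'}\cA(I) = \Theta^*$. Since $I^{w'}$ is defined cell-by-cell from $\Theta^*$, we have $I^{w'} = I^w$, and by assumption the latter equals $\init_w I$. In particular,
\[
\grdim(I^{w'}) = \grdim(I^w) = \grdim(\init_w I) = \grdim(I),
\]
where the last equality uses that initial ideals preserve graded dimension. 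On the other hand, the same identity applied to $w'$ gives $\grdim(\init_{w'} I) = \grdim(I)$, so $\grdim(I^{w'}) = \grdim(\init_{w'} I)$.

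Now Theorem~\ref{thm:intersectioncontains} provides the containment $\init_{w'} I \subset I^{w'}$, and an inclusion of homogeneous ideals of equal graded dimension must be an equality. Hence $I^{w'} = \init_{w'} I$, i.e.\ $w' \in \Omega^*(I)$, which is exactly the statement that $\Omega^*(I)$ contains the relative interior of $\cone(\cA(I), \Theta^*)$ reflected through the origin. There is no real obstacle here — the argument is formally identical to that of Proposition~\ref{isunion}, and the only thing to be careful about is the sign: $I^w$ is governed by $\subd_{-w}\cA(I)$, so the cones over which $I^w$ is locally constant are those of $-\Sec\cA(I)$ rather than $\Sec\cA(I)$.
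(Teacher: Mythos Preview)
Your proof is correct and follows essentially the same route as the paper's: both arguments observe that $I^{w'}$ depends only on $\subd_{-w'}\cA(I)$, use $w\in\Omega^*(I)$ to conclude $\grdim(I^{w'})=\grdim(\init_{w'}I)$, and then apply the containment $\init_{w'}I\subset I^{w'}$ from Theorem~\ref{thm:intersectioncontains} to force equality.
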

\begin{proof}
The proof is similar to that of Proposition~\ref{isunion}. If $w\in\Omega^*(I)$ and $-w'$ defines the same subdivision as $-w$, then $I^w=I^{w'}$. Hence, the latter has the same graded dimension as $\init_{w'} I$. In view of Theorem~\ref{prop:initialcontained}, this is only possible if $I^{w'}=\init_{w'} I$.
\end{proof}

\begin{lemma}\label{dualinitialofIw}
If $w,w'\in\bR^E$ satisfy $-w'\in\cone(\cA(I),\,\subd_{-w}\cA(I))$, then $\init_w(I^{w'})\subset I^w$.
\end{lemma}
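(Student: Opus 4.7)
The plan is to mirror the proof of Proposition~\ref{prop:initialcontained} (the case $I^{w'}=I$), applied to the ideal $I^{w'}$ in place of $I$, with the refinement hypothesis providing the extra input that is needed. First I would unpack the hypothesis: by the fan structure of $\Sec\cA(I)$ and the characterization of closed cones of the secondary fan, $-w'\in\cone(\cA(I),\,\subd_{-w}\cA(I))$ is equivalent to $\Theta^{*}:=\subd_{-w}\cA(I)$ being a refinement of $\Xi^{*}:=\subd_{-w'}\cA(I)$. Consequently every maximal cell $\Delta$ of $\Theta^{*}$ is contained in some maximal cell $\Delta'$ of $\Xi^{*}$, and this $\Delta'$ is unique because $\dim\Delta=\dim\cA(I)=\dim\Delta'$ forces any two such candidates to coincide on a full-dimensional subconfiguration.

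Next I would record that $I^{w'}$ is $\cL(I)$-homogeneous: each $I^{\Delta''}=I\cap\bC[\Delta'']$ is $\cL(I)$-homogeneous since $I$ is and $\bC[\Delta'']$ is $\cL(I)$-graded; each $\al x_e:e\notin\Delta''\ar$ is $\cL(I)$-homogeneous as each $x_e$ has a single $\cL(I)$-degree; so each $\wt I^{\Delta''}$ and hence $I^{w'}$ are $\cL(I)$-homogeneous. This gives $\init_w(I^{w'})=\init_{w_1}(I^{w'})$ for every $w_1\in w+\cL(I)$.

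The main step is to show, for each maximal cell $\Delta$ of $\Theta^{*}$, that $\init_w(I^{w'})\subset\wt I^{\Delta}$; intersecting over $\Delta$ then yields $\init_w(I^{w'})\subset I^{w}$. Fix such $\Delta$ with its associated $\Delta'\supset\Delta$, and choose $w_1\in w+\cL(I)$ with $(w_1)_e=0$ for $a_e\in\Delta$ and $(w_1)_e<0$ otherwise (existence as in the proof of Proposition~\ref{prop:initialcontained}). For $p\in I^{w'}$, if some monomial of $p$ contains an $x_e$ with $a_e\notin\Delta$ then that monomial has negative $w_1$-weight, so every minimum-weight term of $p$ does, and $\init_{w_1}(p)\in\al x_e:a_e\notin\Delta\ar\subset\wt I^{\Delta}$. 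Otherwise $p\in\bC[\Delta]$ and $\init_{w_1}(p)=p$, and we must check $p\in\wt I^{\Delta}$.

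This latter case is where the refinement hypothesis is essential. Since $\Delta\subset\Delta'$ we have $\bC[\Delta]\subset\bC[\Delta']$, so $\rho_{\Delta'}$ restricts to the identity on $\bC[\Delta]$. Since $\wt I^{\Delta'}=\rho_{\Delta'}^{-1}(I^{\Delta'})$ by definition and $p\in I^{w'}\subset\wt I^{\Delta'}$, we obtain $p=\rho_{\Delta'}(p)\in I^{\Delta'}\subset I$. Combined with $p\in\bC[\Delta]$ this gives $p\in I\cap\bC[\Delta]=I^{\Delta}\subset\wt I^{\Delta}$, completing the argument. The main obstacle to finesse is that the naive stronger statement $\init_w(\wt I^{\Delta'})\subset\wt I^{\Delta}$ is \emph{not} true in general (for instance $\rho_{\Delta}(I^{\Delta'})$ need not be contained in $I$); one must argue element-wise, where the condition $p\in\bC[\Delta]$ forces $\rho_{\Delta'}(p)=p$ and so lets membership in $\wt I^{\Delta'}$ be upgraded to membership in $I$.
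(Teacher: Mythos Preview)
Your proof is correct and follows essentially the same route as the paper: reduce to a single maximal cell $\Delta$ of $\Theta^{*}$, shift $w$ by an element of $\cL(I)$ so that it vanishes on $\Delta$ and is negative elsewhere, and split into the two cases $p\in\bC[\Delta]$ and $p\notin\bC[\Delta]$.

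One correction to your closing remark: the stronger inclusion $\init_w(\wt I^{\Delta'})\subset\wt I^{\Delta}$ \emph{is} true, and is exactly what the paper proves. Your own argument already establishes it. In the case $p\in\bC[\Delta]$ the only property of $p$ you invoke is $p\in\wt I^{\Delta'}$ (via the containment $I^{w'}\subset\wt I^{\Delta'}$), so the same reasoning applies verbatim to any $p\in\wt I^{\Delta'}$. Your worry that $\rho_{\Delta}(I^{\Delta'})$ need not lie in $I$ is beside the point: what is used is that $\wt I^{\Delta'}\cap\bC[\Delta']=I^{\Delta'}\subset I$, which is precisely the step you carry out when you write $p=\rho_{\Delta'}(p)\in I^{\Delta'}$.
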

\begin{proof}
Let $\Delta$ be a maximal cell of $\subd_{-w}\cA(I)$, we have a unique maximal cell $\Delta'$ of $\subd_{-w'}\cA(I)$ containing $\Delta$. Let $\wt I^\Delta$ denote the ideal in $S$ generated by $I\cap\bC[\Delta]$ and all $x_e$ with $a_e\notin\Delta$, define $\wt I^{\Delta'}$ similarly. It suffices to show that $\init_w(\wt I^{\Delta'})\subset \wt I^\Delta$.

There is a weight $w_1\in w+\cL(I)$ such that $(w_1)_e=0$ if $a_e\in\Delta$ and $(w_1)_e<0$ otherwise. We have $\init_{w_1}(\wt I^{\Delta'})=\init_w(\wt I^{\Delta'})$. Consider a nonzero $p\in \wt I^{\Delta'}$. If $p\in\bC[\Delta]$, then \[\init_{w_1} p=p\in I\cap\bC[\Delta]\subset \wt I^\Delta.\] Otherwise, all monomials occurring in $\init_{w_1} p$ contain at least one variable $x_e$ with $a_e\notin\Delta$ and, again, we have $\init_{w_1} p\in\wt I^\Delta$.
\end{proof}

\begin{theorem}\label{dualissubfan}
The set $\Omega^*(I)$ is the support of a subfan of $-\Sec\cA(I)$.    
\end{theorem}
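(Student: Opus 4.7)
The plan is to prove this in direct parallel with Theorem~\ref{issubfan}, with the dual machinery provided by Proposition~\ref{dualisunion} and Lemma~\ref{dualinitialofIw} doing all the heavy lifting. By Proposition~\ref{dualisunion}, $\Omega^*(I)$ is already a union of relative interiors of cones of $-\Sec\cA(I)$, so it suffices to show closure under taking faces: whenever $w\in\Omega^*(I)$, any $w'\in\bR^E$ satisfying $-w'\in\cone(\cA(I),\subd_{-w}\cA(I))$ again lies in $\Omega^*(I)$. Unwound, this says that the entire closed cone of $-\Sec\cA(I)$ containing $w$ in its relative interior lies in $\Omega^*(I)$, which together with Proposition~\ref{dualisunion} is precisely the support-of-a-subfan condition.

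To carry out this closure step, the key inputs are Lemma~\ref{dualinitialofIw}, which provides the containment $\init_w(I^{w'})\subseteq I^w$, together with the standard fact that $\grdim(\init_u J)=\grdim(J)$ for any homogeneous ideal $J\subset S$ and any weight $u\in\bR^E$ (since $\init_u$ is applied componentwise in the standard grading and preserves dimension in each component). Combining these with the hypothesis $w\in\Omega^*(I)$, one obtains the chain
\[
\grdim(I^{w'}) = \grdim(\init_w(I^{w'})) \le \grdim(I^w) = \grdim(\init_w I) = \grdim(\init_{w'}I).
\]
On the other hand, Theorem~\ref{thm:intersectioncontains} gives $\init_{w'}I \subseteq I^{w'}$, yielding the reverse inequality $\grdim(\init_{w'}I)\le \grdim(I^{w'})$. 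All these graded dimensions therefore agree, and the containment $\init_{w'}I\subseteq I^{w'}$ must in fact be an equality; that is, $w'\in\Omega^*(I)$.

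No step is expected to present a genuine obstacle, since Proposition~\ref{dualisunion} and Lemma~\ref{dualinitialofIw} were already set up to mirror their primal counterparts and the graded-dimension bookkeeping is formally identical to that in the proof of Theorem~\ref{issubfan}. The only point requiring a little care is tracking the sign: the relevant subdivision is of $-w$ rather than $w$, which is precisely what forces the resulting subfan to live in $-\Sec\cA(I)$ rather than in $\Sec\cA(I)$.
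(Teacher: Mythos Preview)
Your proposal is correct and follows essentially the same approach as the paper: both reduce to showing closure under faces, invoke Lemma~\ref{dualinitialofIw} to obtain the chain $\grdim(I^{w'})=\grdim(\init_w(I^{w'}))\le\grdim(I^w)=\grdim(\init_w I)=\grdim(\init_{w'} I)$, and then conclude equality via Theorem~\ref{thm:intersectioncontains}.
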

\begin{proof}
If $w\in\Omega^*(I)$ and $-w'\in\cone(\cA(I),\,\subd_{-w}\cA(I))$, Lemma~\ref{dualinitialofIw} provides
\[
\grdim(I^{w'})=\grdim(\init_w(I^{w'}))\le\grdim(I^w)=\grdim(\init_w I)=\grdim(\init_{w'} I).
\]
Theorem~\ref{thm:intersectioncontains} then implies $I^{w'}=\init_{w'}I$.
\end{proof}

\begin{example}
Suppose $I$ is toric. Theorem~\ref{Zhu} implies that $w\in\Omega^*(I)$ if and only if $\init_w I$ is radical. However, the results in~\cite[Chapter 8]{Stu1} show that $\init_w I$ is radical if and only if $-w\in\cone(\cA(I),\Theta)$ for an \emph{unimodular triangulation} $\Theta$, i.e.\ a regular subdivision with every cell formed by the vertices of a unimodular simplex. We conclude that $\Omega^*(I)$ is the union of $-\cone(\cA(I),\Theta)$ over all unimodular triangulations $\Theta$.
\end{example}

\section{The very affine setting}
\label{sec:very-affine}
In this section, we consider the case of very affine varieties. The distinguishing feature of this case is that the categorical statements can be phrased geometrically in terms of diagrams of schemes instead of rings. 

Recall that $X$ is a closed subscheme of $\bP(\bC^{E})$ defined by the homogeneous ideal $I \subset S$. We assume that $X$ meets the dense torus $(\bC^{\times})^{E} / \bC^{\times}$ of $\bP(\bC^{E})$ (or, equivalently, that $I$ is monomial-free) and denote by $X^{\circ}$ the scheme-theoretic intersection of $X$ with this torus. More generally, given a cell $\Delta$ of $\Theta$, we denote by $X_{\Delta}^{\circ}$ the scheme-theoretic intersection of $X$ with the coordinate subtorus $(\bC^{\times})^{\Delta} / \bC^{\times}$ of $\bP(\bC^{E})$. 

Let $\init_w X$ denote the closed subscheme of $\bP(\bC^{E})$ defined by the ideal $\init_w I$ and let $\init_w X^{\circ}$ denote the scheme-theoretic intersection of $\init_w X$ with $(\bC^{\times})^{E} / \bC^{\times}$. We assume that this intersection is non-empty which precisely means that $w\in\Trop I$.
Denote $S^\circ=\bC[x_e^{\pm}]_{e\in E}$ and let $I^\circ$ be the ideal in $S^\circ$ generated by $I$. The coordinate ring of $\init_{w}X^{\circ}$ is the $\bC^{\times}$-invariant subring of $S^\circ/\init_w I^\circ$, where $\init_w I^\circ$ is the ideal in $S^\circ$ generated by $\init_w I$. 

Let us describe the affine coordinate ring of $X_{\Delta}^{\circ}$. First, set $\bC[\Delta]^{\circ} = \bC[x_{e}^{\pm}]_{e\in \Delta}$. Define $I_{\Delta}^{\circ}$ as the ideal of $\bC[\Delta]^{\circ}$ generated by $I_{\Delta}$. 
The affine coordinate ring $R^{\circ}_{\Delta}$ of $X_{\Delta}^{\circ}$ is the $\bC^{\times}$-invariant subring of $\bC[\Delta]^{\circ} / I_{\Delta}^{\circ}$.





\begin{proposition}
\label{prop:very-affine-maps}
    If $\Delta \subset \Gamma$ are cells of $\Theta$, then $X_{\Delta}^{\circ}$ and $X_{\Gamma}^{\circ}$ are nonempty, and the coordinate surjection $(\bC^{\times})^{\Gamma} / \bC^{\times} \to (\bC^{\times})^{\Delta} / \bC^{\times}$  induces a morphism  $X_{\Gamma}^{\circ} \to X_{\Delta}^{\circ}$. 
\end{proposition}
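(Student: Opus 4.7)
The plan is to show two things: (1) $X_\Delta^\circ$ (and hence also $X_\Gamma^\circ$) is nonempty, and (2) the coordinate surjection $(\bC^\times)^\Gamma/\bC^\times \to (\bC^\times)^\Delta/\bC^\times$ carries $X_\Gamma^\circ$ into $X_\Delta^\circ$. Both facts rest on a single key lemma: $I_\Delta$ contains no monomials. Once that is established, the rest is formal.

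For the key lemma, I would first handle maximal cells. If $\Delta$ is a maximal cell of $\Theta$, Proposition~\ref{initialcontains} gives $I_\Delta \subset \init_w I$. Since the standing hypothesis of the section is that $\init_w X^\circ$ is nonempty, i.e.\ $w\in\Trop I$, the ideal $\init_w I$ contains no monomials, hence neither does $I_\Delta$. For a general cell $\Delta$, pick any maximal cell $\Gamma'$ containing $\Delta$; Proposition~\ref{intersection} gives $I_\Delta = I_{\Gamma'}\cap\bC[\Delta]$, and since a monomial in $\bC[\Delta]$ is a monomial in $\bC[\Gamma']$, the claim follows. The no-monomial statement implies that $I_\Delta^\circ \subsetneq \bC[\Delta]^\circ$: if $1\in I_\Delta^\circ$, clearing denominators would exhibit a monomial inside $I_\Delta$. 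Thus $\bC[\Delta]^\circ/I_\Delta^\circ$ is a nonzero $\bZ$-graded ring (the grading induced from the standard grading on $\bC[\Delta]$, which is compatible with inversion). Any nonzero homogeneous class can be multiplied by a suitable integer power of some $x_{e_0}$ with $e_0\in\Delta$ to produce a nonzero class in degree $0$, so the invariant subring $R^\circ_\Delta$ is nonzero. This proves $X_\Delta^\circ \neq \emptyset$.

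For the morphism, the inclusion $\bC[\Delta]^\circ \hookrightarrow \bC[\Gamma]^\circ$ of Laurent polynomial rings sends $I_\Delta$ into $I_\Gamma$, since Proposition~\ref{intersection} gives $I_\Delta = I_\Gamma\cap\bC[\Delta]\subset I_\Gamma$. Consequently the extended ideal $I_\Delta^\circ \cdot \bC[\Gamma]^\circ$ lies inside $I_\Gamma^\circ$, and composing with the quotient yields a ring homomorphism $\bC[\Delta]^\circ / I_\Delta^\circ \to \bC[\Gamma]^\circ / I_\Gamma^\circ$. This map respects the $\bZ$-grading, hence is $\bC^\times$-equivariant for the diagonal actions, and so restricts to a homomorphism of degree-zero parts $R^\circ_\Delta \to R^\circ_\Gamma$. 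Taking $\Spec$ produces a morphism $X_\Gamma^\circ \to X_\Delta^\circ$, which by construction is the restriction of the coordinate projection $(\bC^\times)^\Gamma/\bC^\times \to (\bC^\times)^\Delta/\bC^\times$.

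The main obstacle is the no-monomial statement, which genuinely requires the hypothesis $w\in\Trop I$; without it the proposition may fail, as illustrated by the maximal cell $\Delta_3$ in Example~\ref{prinipaltoricex}, where $I_{\Delta_3}=\langle x_1 x_2\rangle$ contains a monomial and the corresponding $X_{\Delta_3}^\circ$ is empty. Everything else is a routine unpacking of the compatibility between the subring inclusions $\bC[\Delta]^\circ\subset\bC[\Gamma]^\circ$, the ideal relation in Proposition~\ref{intersection}, and the $\bZ$-grading encoding the $\bC^\times$-action.
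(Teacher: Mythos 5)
Your proof is correct and follows essentially the same route as the paper: nonemptiness comes from combining Proposition~\ref{initialcontains} with the section's standing hypothesis $w\in\Trop I$ (so $\init_w I$, and hence $I_\Delta$, is monomial-free), and the morphism comes from the embedding $R_\Delta\to R_\Gamma$ of Proposition~\ref{intersection}, localized and restricted to the degree-zero (i.e.\ $\bC^\times$-invariant) part. Two minor redundancies worth noting: Proposition~\ref{initialcontains} already applies to all cells, not just maximal ones, so your reduction via Proposition~\ref{intersection} is unnecessary; and once $\bC[\Delta]^\circ/I_\Delta^\circ\neq 0$, the class of $1$ is itself a nonzero degree-zero element, so no multiplication by powers of $x_{e_0}$ is needed to conclude $R_\Delta^\circ\neq 0$.
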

\begin{proof}
    That $X_{\Delta}^{\circ}$ and $X_{\Gamma}^{\circ}$ are nonempty follows from Proposition \ref{initialcontains}. We have, by Proposition \ref{intersection}, an embedding  $R_{\Delta} \to R_{\Gamma}$. This extends to a map  $R^{\circ}_{\Delta} \to R^{\circ}_{\Gamma}$, which defines the map $X_{\Gamma}^{\circ} \to X_{\Delta}^{\circ}$.
\end{proof}
While the homomorphism $R_{\Delta} \to R_{\Gamma}$ is an embedding, the induced map $R_{\Delta}^{\circ} \to R_{\Gamma}^{\circ}$ need not be an embedding. Geometrically, this means that $X_{\Gamma}^{\circ} \to X_{\Delta}^{\circ}$ may not be dominant. One example is in \cite[Section~4.3]{CoreyLuber}. This example is largely inspired by the example in  \cite[Section~5.2]{GelfandGoreskyMacPhersonSerganova} which demonstrates that the decomposition of the Grassmannian  into matroid strata is not a Whitney stratification. 

By Proposition \ref{prop:very-affine-maps}, we may form the finite limit of affine schemes $\varprojlim_{\bfJ(\Theta)^{\op}} X_{\Delta}^{\circ}$ (recall that $\Theta = \subd_{w} \cA(I)$). Note that finite limits exist in the category of affine schemes because this category has fiber products and a terminal object, see \cite[Proposition~5.21]{Awodey}. 

We have the following result, which as a special case recovers \cite[Theorem~1.1]{Cor}. 

\begin{theorem}
\label{thm:very-affine-initial-deg}
    The coordinate surjections $(\bC^{\times})^{E} / \bC^{\times} \to (\bC^{\times})^{\Delta} / \bC^{\times}$ induce morphisms of affine schemes $\init_{w}X^{\circ}~\to~X_{\Delta}^{\circ}$. Therefore, we have a mediating morphism
    \begin{equation}
    \label{eq:init-to-inv}
        \init_{w} X^{\circ} \to \varprojlim_{\bfJ(\Theta)^{\op}} X_{\Delta}^{\circ}. 
    \end{equation}
    If each $a_{e} \in \cA(I)$ lies in a cell of $\Theta$, then this morphism is a closed immersion.
\end{theorem}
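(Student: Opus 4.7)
The plan is to build the morphisms and the mediating morphism in a routine categorical manner and then reduce the closed immersion claim to a combinatorial connectedness argument on the subdivision.

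For each cell $\Delta$, Proposition~\ref{initialcontains} gives $I_\Delta \subset \init_w I$, so the composition $\bC[\Delta] \hookrightarrow S \twoheadrightarrow S/\init_w I$ kills $I_\Delta$ and descends to a homomorphism $R_\Delta \to S/\init_w I$. Since the $x_e$ with $e \in \Delta$ become units in $S^\circ/\init_w I^\circ$, this extends to a $\bC^\times$-equivariant map $\bC[\Delta]^\circ/I_\Delta^\circ \to S^\circ/\init_w I^\circ$; restricting to invariants yields a ring homomorphism $R_\Delta^\circ \to \cO(\init_w X^\circ)$ whose Spec is the sought morphism $\init_w X^\circ \to X_\Delta^\circ$. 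For $\Delta \subset \Gamma$, compatibility of the inclusions $\bC[\Delta] \subset \bC[\Gamma] \subset S$ gives the cone condition, and the universal property of the limit (in the category of affine schemes, which admits finite limits) produces the mediating morphism~\eqref{eq:init-to-inv}.

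To prove that this mediating morphism is a closed immersion under the additional hypothesis, I would show that the corresponding ring map $\Phi : \varinjlim_{\bfJ(\Theta)} R_\Delta^\circ \to \cO(\init_w X^\circ)$ (which exists by the equivalence between affine schemes and the opposite of commutative rings) is surjective. Its image is the $\bC$-subalgebra of $\cO(\init_w X^\circ)$ generated by the images of the individual $R_\Delta^\circ$. Since $\init_w I^\circ$ is $\bC^\times$-invariant, the ring $\cO(\init_w X^\circ)$ is generated as a $\bC$-algebra by the images of the ratios $x_e/x_{e'}$ for $e, e' \in E$, and the image of $R_\Delta^\circ$ contains such ratios with $e, e' \in \Delta$. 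Surjectivity therefore reduces to expressing each generator $x_e/x_{e'}$ as a telescoping product of ratios supported in individual cells.

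This combinatorial step is the main obstacle. Let $G$ be the graph on $E$ whose edges join any two labels lying in a common cell; I claim $G$ is connected under the hypothesis that every $a_e$ is in some cell. Every cell lies in a maximal cell, so it suffices to connect labels of any two maximal cells of $\Theta$. The dual graph of $\Theta$ on maximal cells is connected, as $\Theta$ is a polyhedral subdivision of the connected hull $\conv(\cA(I))$. Two adjacent maximal cells $M, M'$ share a codimension-$1$ face $F$, which is itself a cell whose polytope vertices lie in $M \cap M'$ and in $\cA(I)$; picking any such vertex provides a label bridging $M$ and $M'$ in $G$. Concatenating along a dual-graph path then yields, for any $e, e' \in E$, a sequence $e = e_0, e_1, \ldots, e_n = e'$ with consecutive pairs in a common cell, and the telescoping identity
\[
    \frac{x_e}{x_{e'}} = \prod_{i=0}^{n-1} \frac{x_{e_i}}{x_{e_{i+1}}}
\]
writes each generator of $\cO(\init_w X^\circ)$ as an element of the image of $\Phi$.
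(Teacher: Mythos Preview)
Your construction of the morphisms $\init_w X^\circ \to X_\Delta^\circ$ and of the mediating morphism is essentially identical to the paper's. For the closed immersion, however, the two arguments diverge.

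The paper passes to the \emph{graded} rings before taking $\bC^\times$-invariants: it considers the mediating map
\[
\varinjlim_{\bfJ(\Theta)} \bC[\Delta]^\circ/I_\Delta^\circ \longrightarrow S^\circ/\init_w I^\circ,
\]
observes that each generator $y_e$ (the image of $x_e$) lies in the image of $\bC[\Delta]^\circ/I_\Delta^\circ$ whenever $a_e\in\Delta$, and concludes surjectivity immediately from the hypothesis that every $a_e$ lies in some cell. Since the map is $\bZ$-graded, it restricts to a surjection on degree-$0$ parts, which is the desired surjection of $\Phi$. No connectedness of any auxiliary graph is needed.

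Your route---working directly at the invariant level and expressing each ratio $x_e/x_{e'}$ as a telescoping product along a path in the dual graph of $\Theta$---is also correct. The key facts you invoke (connectedness of the dual graph of a polyhedral subdivision, and nonemptiness of the shared codimension-$1$ cell $M\cap M'$ between adjacent maximal cells) are standard and valid. This approach is more hands-on and makes the combinatorial content visible, but it is longer: the paper's trick of proving surjectivity upstairs and then descending via the grading bypasses the telescoping entirely.
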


\begin{proof}
By Proposition \ref{initialcontains}, we have a ring homomorphism $R_{\Delta} \to S/ \init_{w} I$. By localizing and restricting to $\bC^{\times}$-invariant subrings, this produces a ring homomorphism $R_{\Delta}^{\circ} \to \bC[\init_{w}X^{\circ}]$. Applying the $\Spec$ functor yields the desired morphism $\init_{w}X^{\circ} \to X_{\Delta}^{\circ}$.

    
As described in the proof of Proposition \ref{prop:very-affine-maps}, when $\Delta \subset \Gamma$ are cells of $\Theta$, the map $X_{\Gamma}^{\circ} \to X_{\Delta}^{\circ}$ is defined by the ring homomorphism $R_{\Delta}^{\circ} \to R_{\Gamma}^{\circ}$. 
We may therefore form the colimit $\varinjlim_{\bfJ(\Theta)} R_{\Delta}^{\circ}$. The $\Spec$ functor is a contravariant right-adjoint functor (its left-adjoint is the global sections functor), and so it takes colimits to limits, see  \cite[Proposition~9.14]{Awodey}. Therefore, the affine coordinate ring of $\varprojlim_{\bfJ(\Theta)^{\op}} X_{\Delta}^{\circ}$ is  $\varinjlim_{\bfJ(\Theta)} R_{\Delta}^{\circ}$. For the last statement, it suffices to show that the mediating morphism $\varinjlim_{\bfJ(\Theta)} R_{\Delta}^{\circ} \to \bC[\init_{w}X^{\circ}]$ is surjective.


The morphisms $R_{\Delta} \to S/ \init_{w} I$ provide localized morphisms $\bC[\Delta]^{\circ} / I_{\Delta}^{\circ} \to S^{\circ} / \init_{w} I^{\circ}$ and a mediating morphism $\varinjlim_{\bfJ(\Theta)} \bC[\Delta]^{\circ} / I_{\Delta}^{\circ} \to S^{\circ} / \init_{w} I^{\circ}$.
Let $y_e$ denote the image of $x_e$ in $S^\circ/\init_w I^\circ$. If $a_e\in \cA(I)$ lies in the cell $\Delta$ of $\Theta$, then $y_e$ lies in the image of $\bC[\Delta]^{\circ} / I_{\Delta}^{\circ} \to S^{\circ} / \init_{w} I^{\circ}$. So if each $a_{e}$ lies in a cell of $\Theta$, then $\varinjlim_{\bfJ(\Theta)} \bC[\Delta]^{\circ} / I_{\Delta}^{\circ} \to S^{\circ} / \init_{w} I^{\circ}$ is surjective because the $y_e$ and their inverses generate $S^{\circ} / \init_{w} I^{\circ}$. Since this ring homomorphism is $\bC^{\times}$-equivariant, it restricts to a surjective ring homomorphism $\varinjlim_{\bfJ(\Theta)} R_{\Delta}^{\circ} \to \bC[\init_{w}X^{\circ}]$.
\end{proof}

Finally, we show that the limit considered in the above theorem has an explicit description in terms of sums of ideals, similarly to the projective setting discussed in Subsections~\ref{sec:lower-bound} and~\ref{sec:colimit}. Let $I_w^\circ\subset S^\circ$ be the ideal generated by the subspaces $I_\Delta^\circ\subset\bC[\Delta^\circ]\subset S^\circ$ for all cells $\Delta$ of $\Theta$. We denote by $R_w^\circ$ the $\bC^\times$-invariant subring of $S^\circ/I_w^\circ$ and set $X_w^\circ=\Spec R_w^\circ$.
\begin{theorem}\label{thm: limit is Xw}
If each $a_{e} \in \cA(I)$ lies in a cell of $\Theta$, then
\[\varprojlim_{\bfJ(\Theta)^{\op}} X_{\Delta}^{\circ}=X_w^\circ.\]
\end{theorem}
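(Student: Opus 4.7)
The plan is to reduce the theorem to an identification of $\bC$-algebras
\[ \varinjlim_{\bfJ(\Theta)} R_{\Delta}^{\circ} \;\cong\; R_w^\circ, \]
since the proof of Theorem~\ref{thm:very-affine-initial-deg} shows that the left-hand side is the affine coordinate ring of $\varprojlim_{\bfJ(\Theta)^{\op}} X_{\Delta}^{\circ}$. The strategy is to adapt the proof of Theorem~\ref{thm:colimit} to the localized, degree-zero setting.

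First I construct a natural cocone $(R_w^\circ, \{d_\Delta\})$. The maps $c_\Delta \colon R_\Delta \hookrightarrow R_w$ from Theorem~\ref{thm:colimit} are graded for the $\one$-grading; using $I_\Delta^\circ \subset I_w^\circ$ they extend to ring homomorphisms $\bC[\Delta]^\circ / I_\Delta^\circ \to S^\circ / I_w^\circ$ (the images of $x_e$ for $e \in \Delta$ being units in the target), which in turn restrict on $\bZ$-degree-zero subrings to the desired $d_\Delta \colon R_\Delta^\circ \to R_w^\circ$. Cocone compatibility is inherited from $\{c_\Delta\}$.

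Next I verify universality. Given another cocone $(T, \{t_\Delta\})$, I construct the mediating $\phi \colon R_w^\circ \to T$ as follows. Fix $e_0 \in E$. For each $e \in E$, the hypothesis that $a_e$ lies in some cell, combined with the connectedness of the adjacency graph $\cG(\Theta)$ (since $\conv \cA(I)$ is convex), yields a chain $e_0 = f_0, f_1, \dots, f_k = e$ and cells $\Delta_1, \dots, \Delta_k \in \Theta$ with $\{f_{i-1}, f_i\} \subset \Delta_i$. Set
\[
y_e \;=\; \prod_{i=1}^{k} t_{\Delta_i}\bigl(\overline{x_{f_i}/x_{f_{i-1}}}\bigr) \in T,
\]
and put $\phi(\overline{x^a}) = \prod_{e \in E} y_e^{a_e}$ on every degree-zero Laurent monomial, extending $\bC$-linearly.

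Three checks close the argument: (i) $y_e$ is independent of the chain; (ii) $\phi$ kills $I_w^\circ$ and so descends to $R_w^\circ$; (iii) $\phi \circ d_\Delta = t_\Delta$. For (ii), every generator of $I_w^\circ$ lies in some $I_\Delta^\circ$ and chains can be chosen entirely inside $\Delta$; for (iii), telescoping chains inside $\Delta$ produces the claim; uniqueness of $\phi$ is immediate because the ratios $\overline{x_e/x_{e_0}}$ generate $R_w^\circ$ as a $\bC$-algebra and their images under any mediating morphism are forced. The main obstacle is (i). Its proof relies on the fact that any two chains from $e_0$ to $e$ differ by a finite sequence of elementary local exchanges, each swapping a subsegment contained in one cell $\Delta$ for another subsegment in the same cell; the multiplicativity of $t_\Delta$ on $R_\Delta^\circ$ then forces the product to remain unchanged. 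The required connectedness of $\cG(\Theta)$ together with the fact that every codimension-one shared face contains at least one index (so chains can traverse between adjacent cells) makes this combinatorial argument go through.
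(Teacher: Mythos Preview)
Your outline is correct in spirit, and the construction of the cocone $(R_w^\circ,\{d_\Delta\})$ together with the uniqueness argument are fine. The substantive gap is in step (i). You assert that any two chains from $e_0$ to $e$ differ by a sequence of local exchanges inside single cells, and you justify this by invoking connectedness of $\cG(\Theta)$. But connectedness only guarantees that \emph{some} chain exists; showing that any two chains yield the same $y_e$ amounts to showing that the product of the $t_{\Gamma_i}(\overline{x_{g_i}/x_{g_{i-1}}})$ around every closed loop equals $1$. That is a simple-connectedness statement about the $2$-complex whose simplices are the subsets of $E$ lying in a common cell of $\Theta$, not a $\pi_0$ statement. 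It is true for a regular subdivision of a convex configuration---ultimately because the nerve of the cover by cells is contractible---but this needs a genuine argument, and nothing in your sketch supplies one. Your final sentence conflates $\pi_0$ with $\pi_1$.

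The paper's proof sidesteps this difficulty entirely by working one level up. Rather than the degree-zero rings $R_\Delta^\circ$, it first proves $\varinjlim_{\bfJ(\Theta)}\bC[\Delta]^\circ/I_\Delta^\circ=S^\circ/I_w^\circ$ and only afterwards passes to $\bC^\times$-invariants. The payoff of this reordering is that one has honest localization maps $\iota_\Delta\colon R_\Delta\to\bC[\Delta]^\circ/I_\Delta^\circ$, so any cocone $(R',\{c'_\Delta\})$ from the localized diagram composes with the $\iota_\Delta$ to give a cocone from the unlocalized diagram $\cR$. Theorem~\ref{thm:colimit} then hands over a mediating morphism $v\colon R_w\to R'$ for free; each generator $y_e\in R_w$ lands in a unit (its image factors through a ring in which $x_e$ is already inverted), so the universal property of localization extends $v$ to $S^\circ/I_w^\circ\to R'$. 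No chains, no path-independence, no combinatorics of $\Theta$ beyond what is already encapsulated in Theorem~\ref{thm:colimit}. Your choice to work directly with the $\bC^\times$-invariant subrings is precisely what blocks this shortcut: there is no ring map from the graded ring $R_\Delta$ into its degree-zero localization $R_\Delta^\circ$ along which to transport cocones back to $\cR$.
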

\begin{proof}
Let $\cR^\circ$ denote the diagram of shape $\bfJ(\Theta)$ formed by the rings $\bC[\Delta]^{\circ} / I_{\Delta}^{\circ}$. It suffices to show that
\[\varinjlim_{\bfJ(\Theta)} \bC[\Delta]^{\circ} / I_{\Delta}^{\circ}=S^\circ/I_w^\circ,\]
we may then pass to $\bC^\times$-invariants and apply $\Spec$. The maps $c_\Delta\colon R_\Delta\to R_w$ considered in Subsection~\ref{sec:colimit} can be localized to obtain maps $c_\Delta^\circ\colon \bC[\Delta]^{\circ} / I_{\Delta}^{\circ}\to S^\circ/I_w^\circ$. 

For every $\Delta$ we have a localization map $\iota_\Delta\colon R_\Delta\to\bC[\Delta]^{\circ} / I_{\Delta}^{\circ}$. Consider a cocone $(R',\{c_\Delta\})$ from $\cR^\circ$. It induces a cocone $(R',\{c_\Delta\circ\iota_\Delta\})$ from the diagram $\cR$ considered in Subsection~\ref{sec:colimit}. Since $R_w$ is the colimit of $\cR$, we have a unique mediating morphism $v:R_w\to R'$. The image of every $y_e$ under $v$ is invertible where $y_e$ denotes the image of $x_e$ in $R_w$.  By the universal property of localization, we can extend $v$ to a morphism $v^\circ\colon S^\circ/I_w^\circ\to R'$, which is the desired mediating morphism. The uniqueness of this mediating morphism is also evident: its composition with the localization map $R_w\to S^\circ/I_w^\circ$ must coincide with $v$.
\end{proof}

Of course, situations in which the mediating morphism obtained in Theorem~\ref{thm:very-affine-initial-deg} is an isomorphism, are of particular interest. It is easily seen that this holds whenever the analogous statement holds in the projective setting. Indeed, $\bC[\init_w X^\circ]$ is the $\bC^\times$-invariant subring of the localization of $S/\init_w I$ while $R_w^\circ$ is the $\bC^\times$-invariant subring of the localization of $R_w$. We have seen that the latter two rings coincide with the colimits of the respective diagrams, this provides
\begin{cor}
\label{cor:proj-to-va}
Suppose that each $a_e$ lies in some cell of $\Theta$. If the mediating morphism in Corollary~\ref{cor:med-morph-colim} is an isomorphism, then so is the mediating morphism in Theorem~\ref{thm:very-affine-initial-deg}.
\end{cor}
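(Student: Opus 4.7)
The plan is to reduce the very affine statement to the ring-theoretic equality $I_w = \init_w I$ and then pass through the localization/invariants machinery. First, I would invoke Theorem~\ref{thm: limit is Xw}, which (under the standing hypothesis that each $a_e$ lies in some cell of $\Theta$) identifies $\varprojlim_{\bfJ(\Theta)^{\op}} X_\Delta^\circ$ with $X_w^\circ = \Spec R_w^\circ$. Under this identification, the mediating morphism of Theorem~\ref{thm:very-affine-initial-deg} corresponds on coordinate rings to a homomorphism $R_w^\circ \to \bC[\init_w X^\circ]$; unwinding the construction, this homomorphism is just the $\bC^\times$-invariant part of the homomorphism $S^\circ / I_w^\circ \to S^\circ / \init_w I^\circ$ induced by $\alpha\colon R_w \to S/\init_w I$ from Corollary~\ref{cor:med-morph-colim}.

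Next, I would use the hypothesis that $\alpha$ is an isomorphism. Combined with the surjectivity provided by Theorem~\ref{sumcontained}, this amounts to the equality $I_w = \init_w I$ in $S$. Localizing at the product of the $x_e$ yields $I_w^\circ = \init_w I^\circ$ as ideals of $S^\circ$, so $S^\circ / I_w^\circ = S^\circ / \init_w I^\circ$ as $\bC^\times$-graded algebras. Passing to $\bC^\times$-invariant subrings then gives $R_w^\circ \cong \bC[\init_w X^\circ]$, and taking $\Spec$ turns this into the desired isomorphism $\init_w X^\circ \xrightarrow{\sim} X_w^\circ \cong \varprojlim_{\bfJ(\Theta)^{\op}} X_\Delta^\circ$.

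The main thing to verify carefully is that the isomorphism produced by this localize-and-invariantize procedure genuinely coincides with the mediating morphism furnished by the universal property of the limit in Theorem~\ref{thm:very-affine-initial-deg}. Both maps are characterized by the requirement that their composition with the projection to $X_\Delta^\circ$ equals the canonical morphism $\init_w X^\circ \to X_\Delta^\circ$ constructed in that theorem, and uniqueness of the mediating morphism forces the two to agree. The expected obstacle is therefore not conceptual but notational: aligning the three descriptions of the target (as a limit of affine schemes, as $\Spec$ of a colimit of localized rings via the proof of Theorem~\ref{thm:very-affine-initial-deg}, and as $X_w^\circ$ via Theorem~\ref{thm: limit is Xw}) and keeping track of the identifications so that the diagram chase closes up cleanly.
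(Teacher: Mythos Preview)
Your proposal is correct and follows essentially the same approach as the paper: the paper's argument (given in the sentence immediately preceding the corollary) is precisely that $\bC[\init_w X^\circ]$ and $R_w^\circ$ are obtained from $S/\init_w I$ and $R_w$ by localizing and passing to $\bC^\times$-invariants, so an isomorphism $\alpha$ induces the desired isomorphism. Your write-up is more detailed than the paper's one-line justification, in particular your explicit verification via the universal property that the constructed isomorphism agrees with the mediating morphism, but the underlying idea is the same.
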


\begin{remark}
The converse of the above is not true. By~\cite{Cor}, when $I=I_{3,6}$ and $X^\circ$ is the very affine part of $\Gr(3,6)$, the closed immersion in Theorem~\ref{thm:very-affine-initial-deg} is an isomorphism for any $w\in \Trop I$. However, as discussed in Remark~\ref{rem:subfanofTGr}, there are $w\in \Trop I$ for which the inclusion $I_w\subset\init_w I$ is proper.
\end{remark}

\begin{remark}
For simplicity, we state the last part of Theorem~\ref{thm:very-affine-initial-deg} and Theorem~\ref{thm: limit is Xw} only in the case of all $a_e$ lying in some cell. However, it is natural to expect an extension of these results to the general case. In analogy with Theorem~\ref{thm:colimit-hat}, such a generalization would replace the schemes $X_\Delta^\circ$ by the products $X_\Delta^\circ\times(\bC^\times)^{F}$ where $F = \{e \suchthat a_e\in\conv\Delta,\, a_e\notin\Delta\}$. 
\end{remark}

\bibliographystyle{plainurl}
\bibliography{refs.bib}

\appendix

\section{Grassmannians and matroid strata}
\label{appendix}

In this section, we consider the Grassmannian $\Gr(k,n)$. 
Given $w \in \Trop I_{k,n}$, the regular subdivision $\Theta = \subd_{w}\cA(I_{k,n})$ is matroidal in the sense that each cell of $\Theta$ is the point configuration of a rank $k$ matroid on $[n]$. In this case, every $X_{\Delta}^{\circ}$ appearing in Theorem \ref{thm:very-affine-initial-deg} is a matroid stratum of $\Gr(k,n)$. 
Moreover, by Theorem \ref{Gr2nsame} and Corollary~\ref{cor:proj-to-va}, the mediating morphism in Theorem \ref{thm:very-affine-initial-deg} map is an isomorphism for $k=2$. It is also an isomorphism for all $w\in \Trop I_{k,n}$ for $k=3$ and $n \leq 8$ \cite{Cor, CoreyLuber}.

In this section, we give a new infinite family of initial degenerations $\init_{w}\Gr(k,n)^{\circ}$ where the mediating morphism is an isomorphism, motivated by the class of inductively connected matroids as defined in \cite{LiwskiMohammadi}.

\subsection{Inductively connected paving matroids}
We rapidly review some basic matroid theory. We follow the treatment in \cite{Oxley} and refer the reader to this text for details. 
A matroid (of rank $k$) is a pair $M = (\cE(M), \cB(M))$ where $\cE(M)$ is a finite set and $\cB(M)$ is a subset of $\binom{\cE(M)}{k}$ satisfying the \textit{basis exchange axiom}: for each pair of distinct  $A, B\in \cB$ and $a\in A$, there is a $b\in B$ such that $(A\setminus \{a\}) \cup \{b\}$ lies in $\cB(M)$. Here $\cE(M)$ is called the \textit{ground set} of $M$, an element of $\cB(M)$ is called a \textit{basis} of $M$.  An \textit{independent set} of $M$ is a subset of $\cE(M)$ that is contained in a basis of $M$; the set of independent sets of $M$ is denoted by $\cI(M)$. A \textit{circuit} of $M$ is a minimal dependent set. The \textit{rank} $\rk_{M}(X)$ of $X\subset \cE(M)$ is the size of the largest independent subset of $X$, and the rank of $M$ is the rank of its ground set. 

Given a subset $X\subset \cE(M)$, the \textit{restriction} of $M$ to $X$ is the matroid, denoted by $M|X$, whose independent sets are
\begin{equation*}
    \cI(M|X) = \{ I \subseteq X \suchthat I\in \cI(M)\}.
\end{equation*}
An element $e\in \cE(M)$ is a \textit{coloop} of $M$ if it is contained in every basis of $M$. 
A \textit{flat} of $M$ is a subset $F\subset \cE(M)$ such that $\rk_{M}(F \cup \{e\}) = \rk_{M}(F) + 1$ for each $e\in \cE(M)\setminus F$. A \textit{cyclic flat} of $M$ is a flat $F$ such that $M|F$ has no coloops. Denote by $\cZ^{\ell}(M)$ the set of rank $k-\ell$ cyclic flats of $M$. Given $e\in \cE(M)$ let $\cZ^{\ell}(M, e)\subset \cZ^{\ell}(M)$ be the subset consisting of elements containing $e$. 

The matroid $M$ is \textit{paving}  if each circuit has at least $k$ elements. A restriction $M|X$ of a paving matroid $M$ is a paving matroid, since circuits of the restriction $M|X$ are circuits of $M$ and the rank of $M|X$ is no greater than $k$.

If $M$ is paving, then it is \textit{inductively connected} if there is no nonempty subset $X \subseteq \cE(M)$ such that 
\begin{equation*}
   \{e \in X \suchthat  |\cZ^{1}(M|X, e)| > 2 \} = X. 
\end{equation*}
See \cite[Lemma~4.7]{LiwskiMohammadi}. 

\subsection{Matroid strata and inverse limits}
Denote by $\varepsilon_{1}, \ldots, \varepsilon_{n}$ the standard basis of $\bR^{n}$. Given a subset $B\subset [1, n]$, let 
\begin{equation*}
    a_{B} = \sum_{i\in B} \varepsilon_i
\end{equation*} 
The point configuration $A_{k,n} = \cA(I_{k,n})$, considered in Example \ref{hypersimplexex}, is explicitly:
\begin{equation*}
    A_{k,n} = (a_{B} \suchthat B\subset[1, n],\ |B| = k ).
\end{equation*}
Given a matroid $M$ of rank $k$ on $[1, n]$, its ($\cB(M)$--labeled) point configuration is 
\begin{equation*}
    A_{M} = (a_{B})_{B\in \cB(M)}.
\end{equation*}
The matroid $M$ is \textit{connected} if $\dim A_{M} = n-1$.

Now, suppose that $M$ is a connected, paving and $\bC$-realizable matroid of rank $k$ on $[1,n]$. Let $w(M)\in \bR^{\binom{n}{k}}$ be the corank vector of $M$, i.e.
\begin{equation*}
    w(M)_{B} = n-\rk_{M}(B).
\end{equation*}
Set $\Theta = \subd_{w(M)} A_{k,n}$. Note that $A_M$ is a maximal cell of $\Theta$. 

Given a subset $F\subset [1,n]$, define the rank $k$ matroids $N_{F}$ and $N_{F}'$ on $[1,n]$ by
\begin{equation*}
    \cB(N_{F}) = \{B\in \textstyle{\binom{[1, n]}{k}} \suchthat |B\cap F| \geq k-1\}, \quad 
    \cB(N_{F}') = \{B\in \textstyle{\binom{[1, n]}{k}} \suchthat |B\cap F| = k-1\}.
\end{equation*}
By \cite[Proposition~7.5]{CoreyLuber-SMRS}, the dual graph $\cG(\Theta)$ is a star-shaped graph whose central node corresponds to $A_{M}$, it has a leaf-vertex corresponding to $A_{N_{F}}$ for each $F\in \cZ^{1}(M)$ and the edge connecting this vertex to the central node corresponds to $A_{N_{F}'}$. In particular, $\Theta$ is \textit{matroidal} in the sense that each cell $\Delta$ of $\Theta$ is equal to $A_{M_{\Delta}}$ for a matroid $M_{\Delta}$ of rank $k$ on $[1,n]$. Because $\Theta$ is the corank subdivision of a connected $\bC$-realizable matroid, $M_{\Delta}$ is $\bC$-realizable for each cell $\Delta$ of $\Theta$, see~\cite[Proposition 34 and Theorem 35]{JoswigSchroter}.

In view of realizability, the affine scheme $X_{\Delta}^{\circ}$ appearing in Section \ref{sec:very-affine} can be interpreted as the \textit{matroid stratum} $\Gr(M_{\Delta})^{\circ}\subset \Gr(k,n)$. As discussed in Subsection~\ref{sec:lim-graph}, the diagram of shape $\bfJ(\Theta)^\op$ formed by the $\Gr(M_{\Delta})^{\circ}$, $\Delta\in\Theta$ has a subdiagram of shape $\bfK(\Theta)^\op$. In view of the above, this subdiagram is formed by $\Gr(M)^\circ$ together with the schemes $\Gr(N_F)^\circ$ and $\Gr(N'_F)^\circ$ for all $F\in\cZ^{1}(M)$. By Proposition~\ref{prop:limit-tight-span},
\[\varprojlim_{\bfJ(\Theta)^\op}\Gr(M_\Delta)^{\circ} = \varprojlim_{\bfK(\Theta)^\op}\Gr(M_\Delta)^{\circ}.\]

Set
\begin{equation*}
    Y = \prod_{F\in \cZ^1(M)} \Gr(N_F)^{\circ} \quad \text{and} \quad  
    Y' = \prod_{F\in \cZ^1(M)} \Gr(N_F')^{\circ}.
\end{equation*}
The morphisms $\Gr(N_{F})^{\circ} \to \Gr(N_{F}')^{\circ}$ naturally provide a morphism $Y\to Y'$. Similarly, the morphisms $\Gr(M)^{\circ} \to \Gr(N_{F}')^{\circ}$ provide a morphism $\Gr(M)^{\circ} \to Y'$. We claim that the limit $\varprojlim_{\bfK(\Theta)^\op}\Gr(M_\Delta)^{\circ}$ is isomorphic to the fiber product $\Gr(M)^{\circ} \times_{Y'} Y$. Indeed, there is a clear bijection between the cones to the subdiagram of shape $\bfK(\Theta)^\op$ and cones to the three-node diagram corresponding to the above fiber product. Since the limit of a diagram is the terminal object in the category of cones to that diagram, the limits of the two diagrams must coincide. In summary, we have an isomorphism 
\begin{equation*}
\varprojlim_{\bfJ(\Theta)^{\op}}\Gr(M_{\Delta})^{\circ} \cong \Gr(M)^{\circ} \times_{Y'} Y. \;
\end{equation*}
By \cite[Theorem~35]{JoswigSchroter}, we have that $w(M)\in\Trop I_{k,n}$, and so we may form the initial degeneration $\init_{w(M)} \Gr(k,n)^{\circ}$.

\begin{theorem}
Let $M$ be a connected paving matroid. We have an isomorphism  
    \begin{equation*}
        \init_{w(M)} \Gr(k,n)^{\circ} \cong \Gr(M)^{\circ} \times_{Y'} Y.
    \end{equation*}
Furthermore, $\init_{w(M)} \Gr(k,n)^{\circ}$ is smooth and irreducible. 
\end{theorem}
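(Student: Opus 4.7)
The plan combines the closed-immersion result of Theorem~\ref{thm:very-affine-initial-deg} with an explicit analysis of the strata $\Gr(N_F)^\circ$, $\Gr(N_F')^\circ$ and an induction driven by the inductively connected hypothesis on $M$ that is in force throughout this subsection.

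The first step is to verify the hypothesis of Theorem~\ref{thm:very-affine-initial-deg}, namely that every $a_B \in \cA(I_{k,n})$ lies in a cell of $\Theta = \subd_{w(M)} A_{k,n}$. For $B \in \cB(M)$ this is immediate since $A_M$ is a cell. For $B \notin \cB(M)$, the paving hypothesis forces $B$ to be a circuit of size $k$ and rank $k-1$, whose closure is a rank-$(k-1)$ cyclic flat $F$ of $M$; then $|B \cap F| = k \geq k-1$ and so $a_B$ lies in the cell $A_{N_F}$. Combined with the pre-established identification of the limit with the fiber product, Theorem~\ref{thm:very-affine-initial-deg} yields a closed immersion
\[
\init_{w(M)} \Gr(k,n)^\circ \hookrightarrow \Gr(M)^\circ \times_{Y'} Y.
\]

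The main work is to prove that $\Gr(M)^\circ \times_{Y'} Y$ is smooth and irreducible of dimension $k(n-k)$. Each matroid $N_F$, $N_F'$ has a single nontrivial rank-$(k-1)$ cyclic flat, which makes $\Gr(N_F)^\circ$ and $\Gr(N_F')^\circ$ into explicit smooth irreducible open subschemes of projective bundles with combinatorially computable dimensions, and the natural surjection $\Gr(N_F)^\circ \to \Gr(N_F')^\circ$ is smooth. Under these conditions smoothness and irreducibility of the fiber product reduce to the corresponding properties of $\Gr(M)^\circ$ together with a dimension count matching the $k(n-k)$ side. The plan for this reduction is an induction on $|\cE(M)|$ via \cite[Lemma~4.7]{LiwskiMohammadi}: inductive connectedness furnishes an element $e \in [n]$ lying in at most two rank-$(k-1)$ cyclic flats, and a deletion/splitting argument along $e$ reduces the fiber product to one attached to a smaller inductively connected paving matroid, which is smooth and irreducible by the inductive hypothesis; the reattachment of $e$ contributes a smooth irreducible projective-bundle factor.

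Once the fiber product is shown to be smooth, irreducible, and of dimension $k(n-k)$, the closed immersion from $\init_{w(M)} \Gr(k,n)^\circ$---a scheme of the same dimension by flatness of the Gröbner degeneration to $\Gr(k,n)^\circ$---is forced to be surjective. A final check that the initial ideal agrees scheme-theoretically with the ideal defining the fiber product, using Theorem~\ref{thm: limit is Xw} together with the explicit nature of the strata for $N_F$ and $N_F'$, promotes this to a scheme-theoretic isomorphism, after which smoothness and irreducibility of $\init_{w(M)}\Gr(k,n)^\circ$ transfer across. The hard part will be the inductive irreducibility step: arbitrary matroid strata are famously singular and reducible (Mnev-type phenomena), so the inductively connected paving hypothesis is the precise combinatorial input that rules this out, and careful bookkeeping of how $\cZ^1$ and the poset $\bfK(\Theta)$ behave under deletion of the removable element $e$ is where the technical effort will concentrate.
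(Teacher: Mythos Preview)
Your overall strategy matches the paper's: obtain the closed immersion from Theorem~\ref{thm:very-affine-initial-deg}, identify the limit with the fiber product $\Gr(M)^\circ \times_{Y'} Y$, and then show this fiber product is smooth, irreducible, and of dimension $k(n-k)$, so that the closed immersion from the equidimensional initial degeneration must be an isomorphism.

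The execution differs. The paper does not run an induction on the fiber product. Instead it cites two external results and combines them: \cite[Proposition~7.6]{CoreyLuber-SMRS} gives that the projection $\Gr(M)^\circ \times_{Y'} Y \to \Gr(M)^\circ$ is smooth and surjective together with the formula $\dim(\Gr(M)^\circ \times_{Y'} Y) = \dim \Gr(M)^\circ + \sum_{F\in\cZ^1(M)}(|F|-k+1)$, while \cite[Theorem~A]{LiwskiMohammadi} gives that $\Gr(M)^\circ$ itself is smooth and irreducible with $\dim \Gr(M)^\circ = k(n-k) - \sum_{F\in\cZ^1(M)}(|F|-k+1)$. The inductive-connectedness hypothesis is used only through the second citation. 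Your plan essentially proposes to inline the Liwski--Mohammadi induction, but carried out on the fiber product rather than on $\Gr(M)^\circ$; this is feasible but heavier, and you would need to address that $M\setminus e$ need not remain \emph{connected}, so the inductive hypothesis as stated does not apply directly.

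Two smaller points. Your verification that every $a_B$ lies in a cell is correct but superfluous: $\cA(I_{k,n})$ is the vertex set of the hypersimplex, so every point lies in some cell of any regular subdivision. And your ``final check'' that the isomorphism is scheme-theoretic is unnecessary: once the target is smooth (hence integral) and the closed subscheme has the same dimension, the ideal sheaf vanishes automatically, so the closed immersion is already a scheme isomorphism without any further comparison of ideals.
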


\begin{proof}
    To show that the the closed immersion
    
\begin{equation*}
    \init_{w}\Gr(k,n)^{\circ} \to \varprojlim_{\bfJ(\Theta)^{\op}} \Gr(M_{\Delta})^{\circ}
\end{equation*}
    is an isomorphism when specialized to  $w(M)$, it suffices to show that $\Gr(M)^\circ \times_{Y'} Y$ is smooth and irreducible of dimension $k(n-k)$. The morphism $\Gr(M)^\circ \times_{Y'} Y \to \Gr(M)^{\circ}$ is smooth and surjective by \cite[Proposition~7.6]{CoreyLuber-SMRS}. This proposition also asserts that  
\begin{equation*}
\label{eq:dim-fiber-product}
    \dim(\Gr(M)^{\circ} \times_{Y'} Y) = \dim(\Gr(M)^{\circ}) + \sum_{F\in \cZ^1(M)} (|F| - k + 1). 
\end{equation*}
Now by \cite[Theorem~A]{LiwskiMohammadi}, $\Gr(M)^{\circ}$ is smooth and irreducible with
\begin{equation*}
    \dim(\Gr(M)^{\circ}) = k(n-k) - \sum_{F\in \cZ^1(M)} (|F| - k + 1).
\end{equation*}
Combining these two results, we conclude that $\Gr(M)^{\circ} \times_{Y'} Y$ is smooth and irreducible of dimension $k(n-k)$, as required.
\end{proof}



\section{Obtaining the point configuration $\cA(I)$ computationally}\label{appendixB}

In this short appendix, we describe how to obtain the point configuration $\cA(I)$ from an ideal $I$ using computational methods. For the purposes of this section, we set $E=\{1,\dots,m\}$ and $I\subset \bC[x_1,\ldots,x_m]=S$. 
Recall that $\cL(I) \subset \bR^{m}$ is the lineality space of $\Grob I$. We say that $p\in S$ is \textit{$\cL(I)$--homogeneous} if it is homogeneous with respect to every $w\in\cL(I)$. While $I$ is generated by $\cL(I)$--homogeneous polynomials, an arbitrary generating set may contain elements that are not of this form. This issue is resolved by the following proposition.

\begin{proposition}\label{prop:redGrob}
    If $\cG$ is a reduced Gr\"obner basis of $I$ relative to a fixed (Artinian) monomial order $<$, then each $f\in \cG$ is $\cL(I)$--homogeneous. 
\end{proposition}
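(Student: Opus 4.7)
The plan is to argue by contradiction, exploiting the fact that for every $w \in \cL(I)$ the ideal $I$ is homogeneous with respect to the $w$-grading. This means $I$ decomposes as a direct sum of its $w$-graded pieces; in particular, each $w$-graded component of an element of $I$ is again in $I$.

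Suppose some $f \in \cG$ fails to be $\cL(I)$-homogeneous, so there is $w \in \cL(I)$ for which $f$ is not $w$-homogeneous. I would decompose $f = f_1 + \cdots + f_r$ into its $w$-graded components (of distinct $w$-degrees, $r \geq 2$), arranging that the leading monomial $m = \init_<(f)$ sits in $f_1$. Then $\init_<(f_1) = m$, because all monomials of $f_1$ are monomials of $f$ and $m$ is the $<$-largest among them.

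Next, form $g := f - f_1 = f_2 + \cdots + f_r$. The key observation is that $g \in I$ by the first paragraph, and $g \neq 0$ because $r \geq 2$. Its leading monomial $m' = \init_<(g)$ is a monomial of $f$ distinct from $m$, so $m' < m$. Since $\cG$ is a Gröbner basis, some $\init_<(h)$ with $h \in \cG$ must divide $m'$. The case $h = f$ is impossible, because then $m \mid m'$ would force $m' \geq m$ (since $<$ is a monomial order). So $h \neq f$, and the monomial $m'$ of $f$ is divisible by the leading monomial of another element of $\cG$, contradicting the reducedness of $\cG$.

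The only delicate point is the bookkeeping in the decomposition step: one needs to check that the graded component containing $m$ has $m$ as its leading term, so that subtracting it produces a nonzero element of $I$ whose leading monomial is a strictly $<$-smaller monomial of $f$. Once this is arranged, everything else follows directly from the defining properties of a reduced Gröbner basis.
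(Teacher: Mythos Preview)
Your argument is correct and follows essentially the same route as the paper's proof: decompose $f$ into graded pieces (you use a single witnessing $w\in\cL(I)$, the paper uses the full $\cL(I)$-multigrading), observe that each piece lies in $I$, and then use a piece not containing $\init_< f$ to produce a non-leading monomial of $f$ divisible by the leading term of some other basis element, violating reducedness. The only cosmetic difference is that you take $g$ to be the sum of all components other than the one containing $\init_< f$, whereas the paper picks a single such component; both work for the same reason.
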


\begin{proof}
Suppose $f\in \cG$ is not $\cL(I)$--homogeneous. We may decompose $f$ into the sum of $\cL(I)$--homogeneous components, all of which lie in $I$. Choose such a component $g$ that does not contain the leading term $\init_< f$. The leading term $\init_< g$ lies in the initial ideal $\init_< I$  and must be divisible by the leading term of some other element of $\cG$, which contradicts reducedness.
\end{proof}

Now, given a reduced Gr\"obner basis $\cG=\{f_1,\dots,f_n\}$ of $I$, the computation of $\cL(I)$ reduces to finding a matrix kernel. Let $f_i$ contain $l_i$ nonzero monomials with exponent vectors $v^i_1,\dots,v^i_{l_i}\in\bZ_{\ge0}^m$. Let $N$ be a matrix with $m$ columns, whose rows are the $l_1+\dots+l_n-n$ vectors $v^i_1-v^i_j$ with $i\in[1,n]$ and $j\in[2,l_i]$.

\begin{proposition}\label{prop:matrixN}
For $w\in\bR^m$, one has $w\in\cL(I)$ if and only if $Nw=0$.      
\end{proposition}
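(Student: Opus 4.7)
The plan is to observe that the condition $w \in \cL(I)$ is, by definition, equivalent to $I$ being $w$-homogeneous, and then to translate $w$-homogeneity of $I$ into $w$-homogeneity of each $f_i$ in the reduced Gr\"obner basis $\cG$. The latter, in turn, unpacks directly into the linear equations $Nw = 0$. The substantive content has already been established in Proposition~\ref{prop:redGrob}; what remains is essentially a bookkeeping argument.

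For the forward direction, I would start with $w \in \cL(I)$ and apply Proposition~\ref{prop:redGrob}, which guarantees that every $f_i \in \cG$ is $\cL(I)$-homogeneous, in particular $w$-homogeneous. This means that all monomials appearing in $f_i$ share the same $w$-weight, so $w \cdot v^i_1 = w \cdot v^i_j$ for all $j \in [2, l_i]$. Equivalently, $w \cdot (v^i_1 - v^i_j) = 0$ for all such $i, j$, which is exactly the statement that $w$ lies in the kernel of $N$.

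For the reverse direction, I would argue that if $Nw = 0$, then for each $i$ and each $j$, $w \cdot v^i_j = w \cdot v^i_1$, which is precisely the condition that $f_i$ is $w$-homogeneous. Since $\cG$ generates $I$ and every generator is $w$-homogeneous, the ideal $I$ itself is $w$-homogeneous: any element $p = \sum_i h_i f_i$ decomposes into $w$-homogeneous components (obtained by grouping terms of fixed $w$-weight), each lying in $I$. In particular, for any $p \in I$ the initial form $\init_w p$ is a $w$-homogeneous component of $p$ and therefore lies in $I$, so $\init_w I \subseteq I$; conversely, any $w$-homogeneous element of $I$ equals its own initial form, so $I \subseteq \init_w I$. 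Hence $\init_w I = I$, i.e.\ $w \in \cL(I)$.

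The main obstacle is not an obstacle in any serious sense: the real work was already carried out in Proposition~\ref{prop:redGrob}, which establishes that the reduced Gr\"obner basis is a family of $\cL(I)$-homogeneous polynomials. Once this is in hand, the proposition reduces to the elementary observation that $w$-homogeneity of a polynomial is a system of linear equations in $w$ and that homogeneity of a finitely generated ideal coincides with homogeneity of a generating set.
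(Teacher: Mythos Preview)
Your proof is correct and follows essentially the same approach as the paper's: both directions hinge on the equivalence between $Nw=0$ and $w$-homogeneity of each $f_i$, with Proposition~\ref{prop:redGrob} supplying the nontrivial implication. You simply spell out in more detail why $w$-homogeneity of the generators forces $\init_w I = I$, which the paper treats as immediate.
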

\begin{proof}
By our definition of $N$, the condition $Nw=0$ is equivalent to every $f_i$ being homogeneous with respect to $w$. Since $I$ is generated by $\cG$, we deduce that $w\in\cL(I)$ whenever $Nw=0$. Conversely, Proposition~\ref{prop:redGrob} shows that if $w\in\cL(I)$, then $Nw=0$.
\end{proof}

Given a matrix $N$ as in the above proposition, standard computer algebra tools let us find a matrix $M$ whose columns form a basis of $\ker N=\cL(I)$. The next statement allows us to pass to $\cA(I)$.

\begin{proposition}\label{prop:Afrommatrix}
Let $M$ be a real matrix with $m$ rows whose columns form a basis of $\cL(I)$. Each row of $M$ may be viewed as a point in $\cL(I)^*$, written in the dual basis. The obtained configuration of $m$ points in $\cL(I)^*$ is precisely $\cA(I)$. 
\end{proposition}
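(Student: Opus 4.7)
The plan is to unfold the definition of $\cA(I)$ and directly identify the coordinates of its points in the dual basis. Recall that by definition $\cA(I) = \cA(\cL(I)) = (u_e^{\cL(I)})_{e \in E}$, where $u_e^{\cL(I)} \in \cL(I)^*$ is the restriction of the $e$-th standard coordinate functional on $\bR^E$ to the subspace $\cL(I)$. So what is required is to show that, when one writes these functionals in the basis of $\cL(I)^*$ dual to the columns of $M$, the resulting coordinate vector for the point labeled by $e$ is the $e$-th row of $M$.

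To this end, I would first fix notation: let $v_1, \dots, v_k$ denote the columns of $M$, which by assumption form a basis of $\cL(I) \subset \bR^E$, and let $v_1^*, \dots, v_k^* \in \cL(I)^*$ be the corresponding dual basis, characterized by $v_i^*(v_j) = \delta_{ij}$. Then for any $\phi \in \cL(I)^*$ one has the expansion $\phi = \sum_j \phi(v_j)\, v_j^*$. Applying this to $\phi = u_e^{\cL(I)}$ gives the expansion
\begin{equation*}
u_e^{\cL(I)} \;=\; \sum_{j=1}^{k} u_e^{\cL(I)}(v_j)\, v_j^*.
\end{equation*}

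The key computation is then to evaluate $u_e^{\cL(I)}(v_j)$. By the very definition of $u_e$ as the $e$-th coordinate functional on $\bR^E$, and since the restriction to $\cL(I)$ is compatible with this, one has $u_e^{\cL(I)}(v_j) = (v_j)_e$, which is exactly the $(e,j)$-entry of the matrix $M$. Therefore the coordinates of the point $u_e^{\cL(I)} \in \cA(I)$ in the dual basis $(v_1^*, \dots, v_k^*)$ are the entries of the $e$-th row of $M$, which is precisely what the proposition asserts.

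There is no substantive obstacle here; the statement is essentially a translation between two standard descriptions of a dual basis. The only mild point worth being explicit about in writing is the well-definedness of $u_e^{\cL(I)}$ as an element of $\cL(I)^*$ (noting that $\cL(I) \subset \bR^E$ so the restriction of $u_e$ is genuinely defined), but this is immediate from the setup in Section~\ref{subspaceconfig}.
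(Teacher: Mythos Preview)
Your proof is correct and essentially identical to the paper's own argument: both reduce the claim to the observation that $u_e^{\cL(I)}(v_j) = M_{e,j}$, which is immediate from the definition of $u_e$ as the $e$-th coordinate functional. The paper's version is slightly more terse, omitting the explicit dual-basis expansion you wrote out, but the content is the same.
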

\begin{proof}
For $j\in[1,\dim\cL(I)]$, let $v_j\in\bR^m$ be the $j$th column of $M$. As in Subsection~\ref{subspaceconfig}, for $i\in[1,m]$, let $u^{\cL(I)}_i\in\cL(I)^*$ denote the restriction of the $i$th coordinate function to $\cL(I)$. The proposition asserts that $u^{\cL(I)}_i(v_j)=M_{i,j}$ for all $i,j$, which is by construction.
\end{proof}

To summarize, given an ideal $I$, the point configuration $\cA(I)$ can be found as follows. One first applies a Gr\"obner basis computation to obtain a reduced Gr\"obner basis $\cG$. Next, the kernel of the matrix $N$ considered in Proposition~\ref{prop:matrixN} is computed. The result will be a matrix whose columns form a basis of $\cL(I)$ and, by Proposition~\ref{prop:Afrommatrix}, the rows of this matrix will form $\cA(I)$. An implementation of this algorithm is provided in the repository~\eqref{githublink}.


\end{document}